\newtheorem{observation}{Remark}[section]
\newtheorem{lemma}[observation]{Lemma}  
\newtheorem{theorem}[observation]{Theorem}
\newtheorem{definition}[observation]{Definition}
\newtheorem{remark}[observation]{Remark}
\newtheorem{proposition}[observation]{Proposition} 
\newtheorem{corollary}[observation]{Corollary}
\newdimen\w@dth
\def\setw@dth#1#2{\setbox\z@\hbox{\scriptsize $#1$}\w@dth=\wd\z@
\setbox\@ne\hbox{\scriptsize $#2$}\ifnum\w@dth<\wd\@ne \w@dth=\wd\@ne \fi
\advance\w@dth by 1.2em}
\def\t@^#1_#2{\allowbreak\def\n@one{#1}\def\n@two{#2}\mathrel
{\setw@dth{#1}{#2}
\mathop{\hbox to \w@dth{\rightarrowfill}}\limits
\ifx\n@one\empty\else ^{\box\z@}\fi
\ifx\n@two\empty\else _{\box\@ne}\fi}}
\def\t@@^#1{\@ifnextchar_ {\t@^{#1}}{\t@^{#1}_{}}}
\def\t@left^#1_#2{\def\n@one{#1}\def\n@two{#2}\mathrel{\setw@dth{#1}{#2}
\mathop{\hbox to \w@dth{\leftarrowfill}}\limits
\ifx\n@one\empty\else ^{\box\z@}\fi
\ifx\n@two\empty\else _{\box\@ne}\fi}}
\def\t@@left^#1{\@ifnextchar_ {\t@left^{#1}}{\t@left^{#1}_{}}}
\def\two@^#1_#2{\def\n@one{#1}\def\n@two{#2}\mathrel{\setw@dth{#1}{#2}
\mathop{\vcenter{\hbox to \w@dth{\rightarrowfill}\kern-1.7ex
                 \hbox to \w@dth{\rightarrowfill}}%
       }\limits
\ifx\n@one\empty\else ^{\box\z@}\fi
\ifx\n@two\empty\else _{\box\@ne}\fi}}
\def\tw@@^#1{\@ifnextchar_ {\two@^{#1}}{\two@^{#1}_{}}}
\def\tofr@^#1_#2{\def\n@one{#1}\def\n@two{#2}\mathrel{\setw@dth{#1}{#2}
\mathop{\vcenter{\hbox to \w@dth{\rightarrowfill}\kern-1.7ex
                 \hbox to \w@dth{\leftarrowfill}}%
       }\limits
\ifx\n@one\empty\else ^{\box\z@}\fi
\ifx\n@two\empty\else _{\box\@ne}\fi}}
\def\t@fr@^#1{\@ifnextchar_ {\tofr@^{#1}}{\tofr@^{#1}_{}}}
\newdimen\W@dth
\def\setW@dth#1#2{\setbox\z@\hbox{$#1$}\W@dth=\wd\z@
\setbox\@ne\hbox{$#2$}\ifnum\W@dth<\wd\@ne \W@dth=\wd\@ne \fi
\advance\W@dth by 1.2em}
\def\T@^#1_#2{\allowbreak\def\N@one{#1}\def\N@two{#2}\mathrel
{\setW@dth{#1}{#2}
\mathop{\hbox to \W@dth{\rightarrowfill}}\limits
\ifx\N@one\empty\else ^{\box\z@}\fi
\ifx\N@two\empty\else _{\box\@ne}\fi}}
\def\T@@^#1{\@ifnextchar_ {\T@^{#1}}{\T@^{#1}_{}}}
\def\T@left^#1_#2{\def\N@one{#1}\def\N@two{#2}\mathrel{\setW@dth{#1}{#2}
\mathop{\hbox to \W@dth{\leftarrowfill}}\limits
\ifx\N@one\empty\else ^{\box\z@}\fi
\ifx\N@two\empty\else _{\box\@ne}\fi}}
\def\T@@left^#1{\@ifnextchar_ {\T@left^{#1}}{\T@left^{#1}_{}}}
\def\Tofr@^#1_#2{\def\N@one{#1}\def\N@two{#2}\mathrel{\setW@dth{#1}{#2}
\mathop{\vcenter{\hbox to \W@dth{\rightarrowfill}\kern-1.7ex
                 \hbox to \W@dth{\leftarrowfill}}%
       }\limits
\ifx\N@one\empty\else ^{\box\z@}\fi
\ifx\N@two\empty\else _{\box\@ne}\fi}}
\def\T@fr@^#1{\@ifnextchar_ {\Tofr@^{#1}}{\Tofr@^{#1}_{}}}
\def\Two@^#1_#2{\def\N@one{#1}\def\N@two{#2}\mathrel{\setW@dth{#1}{#2}
\mathop{\vcenter{\hbox to \W@dth{\rightarrowfill}\kern-1.7ex
                 \hbox to \W@dth{\rightarrowfill}}%
       }\limits
\ifx\N@one\empty\else ^{\box\z@}\fi
\ifx\N@two\empty\else _{\box\@ne}\fi}}
\def\Tw@@^#1{\@ifnextchar_ {\Two@^{#1}}{\Two@^{#1}_{}}}
\def\to{\@ifnextchar^ {\t@@}{\t@@^{}}}
\def\from{\@ifnextchar^ {\t@@left}{\t@@left^{}}}
\def\tofro{\@ifnextchar^ {\t@fr@}{\t@fr@^{}}}
\def\To{\@ifnextchar^ {\T@@}{\T@@^{}}}
\def\From{\@ifnextchar^ {\T@@left}{\T@@left^{}}}
\def\Two{\@ifnextchar^ {\Tw@@}{\Tw@@^{}}}
\def\Tofro{\@ifnextchar^ {\T@fr@}{\T@fr@^{}}}
\title{A Tangent Category Alternative to the Fa\`a di Bruno Construction}
\author{Jean-Simon P. Lemay}
\begin{document}
\allowdisplaybreaks

\maketitle



\begin{abstract} The Fa\`a di Bruno construction, introduced by Cockett and Seely, constructs a comonad $\mathsf{Fa{\grave{a}}}$ whose coalgebras are precisely Cartesian differential categories. In other words, for a Cartesian left additive category $\mathbb{X}$, $\mathsf{Fa{\grave{a}}}(\mathbb{X})$ is the cofree Cartesian differential category over $\mathbb{X}$. Composition in these cofree Cartesian differential categories is based on the Fa\`a di Bruno formula, and corresponds to composition of differential forms. This composition, however, is somewhat complex and difficult to work with. In this paper we provide an alternative construction of cofree Cartesian differential categories inspired by tangent categories. In particular, composition defined here is based on the fact that the chain rule for Cartesian differential categories can be expressed using the tangent functor, which simplifies the formulation of the higher order chain rule. 
\end{abstract}

\section{Introduction}

Cartesian differential categories \cite{blute2009cartesian} were introduced by Blute, Cockett, and Seely to study the coKleisli category of a ``tensor'' differential category \cite{dblute2006differential} and to provide the categorical semantics of Ehrhard and Regnier's differential $\lambda$-calculus \cite{ehrhard2003differential}. In particular, a Cartesian differential category admits a differential combinator $\mathsf{D}$ (see Definition \ref{cartdiffdef} below) whose axioms are based on the basic properties of the directional derivative from multivariable calculus such as the chain rule, linearity, and the symmetry of the second derivative. There are many interesting examples of Cartesian differential categories which originate from a wide range of different fields such as, to list a few, classical differential calculus on Euclidean spaces, functor calculus \cite{bauer2018directional}, and linear logic \cite{dblute2006differential, ehrhard2017introduction}. Generalizations of Cartesian differential categories include a restriction category version \cite{cockett2011differential} to study differentiating partial functions, and Cruttwell's generalized Cartesian differential categories \cite{cruttwell2017Cartesian} which drops the additive structure requirement of a Cartesian differential category. Even more surprising is that there is a notion of a cofree Cartesian differential category! 

Shortly after the introduction of Cartesian differential categories, Cockett and Seely introduced the Fa\`a di Bruno construction \cite{cockett2011faa} which provides a comonad $\mathsf{Fa{\grave{a}}}$ on the category of Cartesian left additive categories (see Definition \ref{CLACdef} below) such that the $\mathsf{Fa{\grave{a}}}$-coalgebras are precisely Cartesian differential categories. Therefore the Eilenberg-Moore category of $\mathsf{Fa{\grave{a}}}$-coalgebras is equivalent to the category of Cartesian differential categories \cite[Theorem 3.2.6]{cockett2011faa}, which implies that there are as many Cartesian differential categories as there are Cartesian left additive categories. 

In particular, for a Cartesian left additive category $\mathbb{X}$, $\mathsf{Fa{\grave{a}}}(\mathbb{X})$ is the cofree Cartesian differential categories over $\mathbb{X}$. This came as a total surprise to Cockett and Seely. Briefly, for a Cartesian left additive category $\mathbb{X}$, $\mathsf{Fa{\grave{a}}}(\mathbb{X})$ has the same objects as $\mathbb{X}$, but its morphisms are sequences $(f_0, f_1, \hdots)$ where $f_n: \underbrace{A \times \hdots \times A}_{n-\text{times}} \to B$ is symmetric and multilinear in its last $n-1$ arguments. The idea here is that $f_n$ should be thought of as a differential form but where we've replaced antisymmetry by symmetry. Therefore, as differentiating twice does not result in zero, $f_{n+1}$ should be thought of as the partial derivative of the non-linear part of $f_n$. Though as far as $\mathsf{Fa{\grave{a}}}(\mathbb{X})$ is concerned, there need not be any relation between $f_n$ and $f_{n+1}$. Composition in $\mathsf{Fa{\grave{a}}}(\mathbb{X})$ \cite[Section 2.1]{cockett2011faa}  is based on the famous Fa\`a di Bruno formula for the higher-order chain rule, and furthermore relates to the idea of composing differential forms. Cruttwell also generalized the Fa\`a di Bruno construction for generalized Cartesian differential categories \cite[Section 2.1]{cruttwell2017Cartesian} to provide a comonad on the category of categories with finite products.  

Surprisingly, while the Fa\`a di Bruno construction is an important result, it seems to have slipped under the radar and not much work has been done with $\mathsf{Fa{\grave{a}}}(\mathbb{X})$. Cruttwell even mentions that: ``A more in-depth investigation of such cofree generalized Cartesian differential categories is clearly required'' \cite[Page 8]{cruttwell2017Cartesian}. Then one is left to wonder why $\mathsf{Fa{\grave{a}}}(\mathbb{X})$ has been left mostly unstudied. One possibility as to why is because the composition of $\mathsf{Fa{\grave{a}}}(\mathbb{X})$ and its differential combinator (which we haven't even mentioned above) is somewhat complex and very combinatorial, making use of symmetric trees \cite[Section 1.1]{cockett2011faa}, and is therefore very notation-heavy. This is due in part that the Fa\`a di Bruno formula itself is very combinatorial in nature and even its simplest expressions depend heavily on combinatorial notation. While we applaud Cockett and Seely for defining and working with the composition of $\mathsf{Fa{\grave{a}}}(\mathbb{X})$, one has to admit that it is indeed difficult to work with. But, as with all universal constructions, the concept of cofree Cartesian differential categories are very important and should not be abandoned! Here we suggest an alternative construction of cofree Cartesian differential categories inspired by tangent categories. 

The concept of a tangent category originate backs to Rosick{\`y} \cite{rosicky1984abstract}, and was later generalized by Cockett and Cruttwell \cite{cockett2014differential}. It should be mentioned that the Fa\`a di Bruno construction predates Cockett and Cruttwell's notion of a tangent category. Of particular importance to us here is that tangent categories come equipped with a tangent bundle endofunctor $\mathsf{T}$, and that every Cartesian differential category is in fact a tangent category. For Cartesian differential categories, the relation between the tangent functor $\mathsf{T}$ and the differential combinator $\mathsf{D}$ is captured by the chain rule, expressed here (\ref{chainrule}), which then provides a very simple expression of the higher-order chain rule, expressed here (\ref{highchainrule}). This higher-order version of the chain rule will be our inspiration for composition in our new presentation of cofree Cartesian differential categories. 

Just like $\mathsf{Fa{\grave{a}}}(\mathbb{X})$, maps of the cofree Cartesian differential category will be special sequences of maps which we call $\mathsf{D}$-sequences (Definition \ref{Ddef}). However, it turns out that most of this construction (such as the differential combinator, composition, etc.) can be done with more generalized sort sequences called pre-$\mathsf{D}$-sequences (Definition \ref{preddef}). Pre-$\mathsf{D}$-sequences can be defined for arbitrary categories with finite products -- no additive structure required. We construct a category of pre-$\mathsf{D}$-sequences (Definition \ref{predcat}), and which provides us with a comonad on the category of categories with finite products (Section \ref{predcomsec}). Briefly, a pre-$\mathsf{D}$-sequence is a sequence of maps $(f_0, f_1, \hdots)$ where $f_n: \underbrace{A \times \hdots \times A}_{2^n-\text{times}} \to B$. The intuition here is that $f_n$ should be thought of as the $n$th total derivative of $f_0$, which is similar to the idea for Fa\`a di Bruno construction but where we also derive the linear arguments. Composition of pre-$\mathsf{D}$-sequences (Definition \ref{predcat} (iv))  is based on the higher-order chain rule using the tangent functor, while the differential of a pre-$\mathsf{D}$-sequence (Definition \ref{tandiffseq} (ii)) is simply the sequence shifted to the left. Compared to the Fa\`a di Bruno construction, this composition and differential can be defined without the need of an additive structure and is quite simple to work with. However, even in the presence of additive structure, the category of pre-$\mathsf{D}$-sequence is not a Cartesian differential category, simply because a pre-$\mathsf{D}$-sequence is too arbitrary a sequence. By considering pre-$\mathsf{D}$-sequences which satisfy extra conditions, based on the axioms of a Cartesian differential category (which now requires an additive structure), we obtain $\mathsf{D}$-sequences, and it follows that the category of $\mathsf{D}$-sequences (Definition \ref{Dcat}) will be the cofree Cartesian differential category. Indeed, $\mathsf{D}$-sequences provide us with a comonad $(\mathcal{D}, \delta, \varepsilon)$ on the category of Cartesian left additive categories (Section \ref{Dcomsec}), such that the $\mathcal{D}$-coalgebras are precisely the Cartesian differential categories (Theorem \ref{mainthm}). Therefore we have that the category of $\mathsf{D}$-sequences of $\mathbb{X}$ will be equivalent as a Cartesian differential category to $\mathsf{Fa{\grave{a}}}(\mathbb{X})$ (Corollary \ref{cor1}). The construction provided here also generalizes to constructing cofree generalized Cartesian differential categories (Appendix \ref{GCDCsec}). Though, as Cartesian differential categories are more prominent then generalized Cartesian differential categories (at the time of writing this paper), we've elected to go straight to building Cartesian differential categories. 

It is always an advantage and very useful to be able to construct and describe a concept in different ways. It allows one to have options to best suit one's needs and interest. Though, it is true that up til now not much work has been done with cofree Cartesian differential categories, and sadly, other then the construction itself, is not done here either. However, we hope that this alternative construction will open the door and inspire future developments in this direction. 

\subparagraph{Conventions:} In this paper, we will use diagrammatic order for composition: this means that the composite map $fg$ is the map which first does $f$ then $g$. 

\section{Cartesian Differential Categories}\label{CDCsec}

In this section, we review Cartesian differential categories \cite{blute2009cartesian}, and a bit of tangent categories \cite{cockett2014differential,cockett2016differential}, to help better understand and motivate pre-$\mathsf{D}$-sequences (Section \ref{PREDsec}) and $\mathsf{D}$-sequences (Section \ref{Dsec}). In particular, we introduce notation and conventions which simplify working with $\mathsf{D}$-sequences. 

\subsection{Cartesian Left Additive Categories}

We begin with the definition of Cartesian left additive categories \cite{blute2009cartesian}. Here ``additive'' is meant being skew enriched over commutative monoids, which in particular means that we do not assume negatives -- this differs from additive categories in the sense of \cite{mac2013categories}. 

\begin{definition} A \textbf{left additive category} \cite[Definition 1.1.1]{blute2009cartesian} is a category such that each hom-set is a commutative monoid, with addition $+$ and zero $0$, such that composition on the {\em left} preserves the additive structure, that is $f(g+h)=fg+fh$ and $f0=0$. A map $h$ in a left additive category is \textbf{additive} \cite{blute2009cartesian} if composition on the right by $h$ preserves the additive structure, that is $(f+g)h=fh+gh$ and $0h=0$. 
\end{definition} 

\begin{definition}\label{CLACdef} A \textbf{Cartesian left additive category} \cite[Definition 1.2.1]{blute2009cartesian} is a left additive category with finite products such that all projections $\pi_i$ are additive. 
\end{definition}

Note that the definition given here of a Cartesian left additive category is slightly different from the one found in \cite[Definition 1.2.1]{blute2009cartesian}. Indeed, we require only that the projections maps be additive and not that pairing of additive is again additive, or equivalently by \cite[Proposition 1.2.2]{blute2009cartesian}, that the diagonal map is additive and that product of additive maps is additive. We now show that requiring the projection maps be additive is sufficient: 

\begin{lemma}\label{claclemma} In a Cartesian left additive category (as defined in Definition \ref{CLACdef}): 
\begin{enumerate}[{\em (i)}]
\item $\langle f ,g \rangle + \langle h, k \rangle = \langle f + h, g+k \rangle$ and $ \langle 0, 0 \rangle = 0$;
\item If $f$ and $g$ are additive then $\langle f,g \rangle$ is additive;
\item The diagonal map $\Delta$ is additive;
\item If $f$ and $g$ are additive then $f \times g$ is additive. 
\end{enumerate}
\end{lemma}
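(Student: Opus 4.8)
The plan is to prove the four items in order, since the later ones build on the earlier ones, and to use throughout the two available hypotheses: left additivity (composition on the left preserves $+$ and $0$) and the additivity of the projections $\pi_i$ (which is exactly what Definition \ref{CLACdef} provides). The key recurring technique is that to verify an equation between maps into a product, it suffices to check the equation after post-composing with each projection, because a map into $A \times B$ is determined by its components.

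For item (i), I would compute $\left(\langle f,g\rangle + \langle h,k\rangle\right)\pi_1$ and $\left(\langle f,g\rangle + \langle h,k\rangle\right)\pi_2$. Since $\pi_1$ is additive, post-composition on the right by $\pi_1$ distributes over the sum, giving $\langle f,g\rangle \pi_1 + \langle h,k\rangle \pi_1 = f + h$, and similarly the $\pi_2$ component is $g+k$; hence the sum equals $\langle f+h, g+k\rangle$. The identity $\langle 0,0\rangle = 0$ follows the same way, using $0\pi_i = 0$ from additivity of $\pi_i$, so that $\langle 0,0\rangle\pi_i = 0 = 0\pi_i$ for each $i$.

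For item (ii), suppose $f$ and $g$ are additive; I want $\langle f,g\rangle$ additive, i.e. $(p+q)\langle f,g\rangle = p\langle f,g\rangle + q\langle f,g\rangle$ and $0\langle f,g\rangle = 0$. The clean route is to observe that $(p+q)\langle f,g\rangle = \langle (p+q)f,\ (p+q)g\rangle$ (since precomposition with the pairing just feeds the same map into each component, using $\langle f,g\rangle = \langle \pi_1 f', \ldots\rangle$-type reasoning, or more directly $h\langle f,g\rangle = \langle hf, hg\rangle$). Then additivity of $f$ and $g$ gives $\langle pf + qf,\ pg+qg\rangle$, which by item (i) equals $\langle pf,pg\rangle + \langle qf,qg\rangle = p\langle f,g\rangle + q\langle f,g\rangle$. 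The zero case is handled identically with $0f = 0$ and $\langle 0,0\rangle = 0$ from item (i). Item (iii) is then immediate: the diagonal $\Delta = \langle 1,1\rangle$, and since the identity map is additive, item (ii) applies directly. For item (iv), write $f \times g = \langle \pi_1 f, \pi_2 g\rangle$; each $\pi_i$ is additive by hypothesis and $f,g$ are assumed additive, so each component $\pi_i f$, $\pi_j g$ is a composite of additive maps and hence additive, whence item (ii) gives that $f \times g$ is additive.

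The only real point requiring care — the step I expect to be the main obstacle — is justifying the identity $h\langle f,g\rangle = \langle hf, hg\rangle$ used in item (ii), since this is the one place where I precompose a pairing rather than postcompose with a projection. This follows from the universal property of the product (post-compose both sides with $\pi_i$ and use $\langle f,g\rangle\pi_i = $ the appropriate component), and it does \emph{not} require any additivity; it is purely the Cartesian structure. Once this identity is in hand the rest is routine bookkeeping with the distributivity laws, and the overall strategy is simply to reduce everything to item (i) plus the additivity of the projections.
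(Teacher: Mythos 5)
Your proof is correct and takes essentially the same route as the paper: item (i) by checking components against the additive projections (the paper simply defers this to \cite[Lemma 1.2.3]{blute2009cartesian}, whose argument is exactly the componentwise one you give), item (ii) via the purely Cartesian identity $h\langle f,g \rangle = \langle hf, hg \rangle$ combined with additivity of $f$ and $g$ and item (i), and items (iii) and (iv) reduced to (ii) using $\Delta = \langle 1,1 \rangle$, $f \times g = \langle \pi_0 f, \pi_1 g \rangle$, and the facts that identities are additive and additive maps compose. The step you single out as needing care---that $h\langle f,g \rangle = \langle hf, hg \rangle$ is pure product structure requiring no additivity---is precisely how the paper's computation proceeds as well.
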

\begin{proof} The proof of $(i)$ is the same as the one found in \cite[Lemma 1.2.3]{blute2009cartesian} and uses only that the projections $\pi_i$ are additive. Then $(ii)$ follows from $(i)$, that is, assuming $f$ and $g$ are additive: 
\[(h+k) \langle f,g \rangle = \langle (h+k) f, (h+k) g \rangle = \langle hf + kf, hg + kg \rangle = \langle hf,hg \rangle + \langle kf,kg \rangle = h\langle f,g \rangle + k\langle f,g \rangle\]
\[0 \langle f,g \rangle = \langle 0f,0g \rangle = \langle 0, 0 \rangle = 0\]
and therefore $\langle f,g \rangle$ is additive. For $(iii)$, \cite[Proposition 1.1.2]{blute2009cartesian} tells us that all identity maps are additive, and therefore by $(ii)$, $\Delta= \langle 1, 1 \rangle$ is additive. For $(iv)$, \cite[Proposition 1.1.2]{blute2009cartesian} also tells us that additive maps are closed under composition, so if $f$ and $g$ are additive, then so is $\pi_0 f$ and $\pi_1 g$. Then again by $(ii)$, $f \times g = \langle \pi_0 f, \pi_1 g \rangle$ is additive. 
\end{proof} 

\subsection{Cartesian Differential Categories}

There are various (but equivalent) ways of expressing the axioms of a Cartesian differential category. We've chosen the one found in \cite[Section 3.4]{cockett2016differential} as it is the most closely relates to tangent categories. In particular, we will express the axioms of a Cartesian differential category using the natural transformations of its tangent category structure \cite{cockett2014differential}. The maps of these natural transformations can be defined without the differential combinator -- though they loose their naturality!  

\begin{definition}\label{cartdiffdef} A \textbf{Cartesian differential category} \cite[Definition 2.1.1]{blute2009cartesian} is a Cartesian left additive category with a combinator $\mathsf{D}$ on maps -- called the \textbf{differential combaintor} -- which written as an inference rule gives: 
\[\infer{\mathsf{D}[f]: A \times A \to B}{f: A \to B}\]
such that $\mathsf{D}$ satisfies the following: 
\begin{enumerate}[{\bf [CD.1]}]
\item $\mathsf{D}[f+g] = \mathsf{D}[f] + \mathsf{D}[g]$ and $\mathsf{D}[0]=0$;
\item $\left (1 \times (\pi_0 + \pi_1) \right) \mathsf{D}[f] = (1 \times \pi_0)\mathsf{D}[f] + (1 \times \pi_1)\mathsf{D}[f]$ and $\langle 1, 0 \rangle \mathsf{D}[f]=0$;
\item $\mathsf{D}[1]=\pi_1$ and $\mathsf{D}[\pi_j] = \pi_1\pi_j$ (with $j \in \lbrace 0,1 \rbrace$);
\item $\mathsf{D}[\langle f, g \rangle] = \langle  \mathsf{D}[f] , \mathsf{D}[g] \rangle$; 
\item $\mathsf{D}[fg] = \langle \pi_0 f, \mathsf{D}[f] \rangle \mathsf{D}[g]$; 
\item $\ell \mathsf{D}^2[f] = \mathsf{D}[f]$ where $\ell := \langle 1,0 \rangle \times \langle 0,1 \rangle: A \times A \to A \times A \times A \times A$;
\item $c \mathsf{D}^2[f] = \mathsf{D}^2[f]$ where $c := 1 \times \langle \pi_1, \pi_0 \rangle \times 1: A \times A \times AÊ\times A \to A \times A \times A \times A$. 
\end{enumerate}
In a Cartesian differential category, a map $f$ is said to be \textbf{linear} \cite[Definition 2.2.1]{blute2009cartesian} if $\mathsf{D}[f]= \pi_1 f$. 
\end{definition}

\begin{remark} \normalfont Note that here we've flipped the convention found in \cite{blute2009cartesian,cockett2016differential,cockett2014differential}. Here we've elected to have the linear argument in the second argument rather then in the first argument. The convention used here follows that of the more recent work on Cartesian differential categories, and is closer to the 
conventions used for the classical notion of the directional derivative such as $\nabla(f)(\vec x) \cdot \vec y$ or $\mathsf{D}[f](\vec x) \cdot \vec y$. \end{remark}

Many examples of Cartesian differential categories can be found throughout the literature. Some intuition for these axioms can be found in \cite[Remark 2.1.3]{blute2009cartesian}. In particular, \textbf{[CD.5]} is the chain rule -- which plays a fundamental role in the main constructions of this paper. We first observe that \textbf{[CD.4]} is in fact redundant (simplifying what we need check later on): 

\begin{lemma}\label{CD4} \textbf{[CD.4]} follows from \textbf{[CD.3]} and \textbf{[CD.5]}. 
\end{lemma}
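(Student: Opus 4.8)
The plan is to exploit the fact that, in any category with finite products, a map into a product is completely determined by its composites with the two projections, and to combine this with the chain rule \textbf{[CD.5]} and the computation of the derivatives of projections from \textbf{[CD.3]}. Write $h := \langle f, g \rangle \colon A \to B \times C$, so that (reading composition in diagrammatic order) we have $h \pi_0 = f$ and $h \pi_1 = g$. The strategy is to compute $\mathsf{D}[h]\pi_j$ in two ways and deduce that $\mathsf{D}[h]$ is the pairing $\langle \mathsf{D}[f], \mathsf{D}[g]\rangle$.

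First I would apply the chain rule \textbf{[CD.5]} to each composite $h\pi_j$, giving
\[ \mathsf{D}[h\pi_j] = \langle \pi_0 h, \mathsf{D}[h] \rangle \mathsf{D}[\pi_j]. \]
Next I would invoke \textbf{[CD.3]}, which supplies $\mathsf{D}[\pi_j] = \pi_1 \pi_j$, and simplify the right-hand side: since $\langle \pi_0 h, \mathsf{D}[h]\rangle \pi_1 = \mathsf{D}[h]$ by the universal property of the product, the expression collapses to $\mathsf{D}[h]\pi_j$. Thus $\mathsf{D}[h\pi_j] = \mathsf{D}[h]\pi_j$. Substituting $h\pi_0 = f$ and $h\pi_1 = g$ yields $\mathsf{D}[f] = \mathsf{D}[h]\pi_0$ and $\mathsf{D}[g] = \mathsf{D}[h]\pi_1$.

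Finally, I would use once more that any map into a product equals the pairing of its projection-composites, namely $\mathsf{D}[h] = \langle \mathsf{D}[h]\pi_0, \mathsf{D}[h]\pi_1\rangle$, and substitute the two identities just obtained to conclude $\mathsf{D}[\langle f, g\rangle] = \mathsf{D}[h] = \langle \mathsf{D}[f], \mathsf{D}[g]\rangle$, which is exactly \textbf{[CD.4]}. This argument uses no additive structure beyond what is already available, only \textbf{[CD.3]}, \textbf{[CD.5]}, and the universal property of finite products.

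The calculation itself is routine, so there is no deep obstacle; the only point requiring care is bookkeeping with the diagrammatic order of composition, in particular recognizing that the pairing $\langle \pi_0 h, \mathsf{D}[h]\rangle$ composed on the right with $\pi_1$ selects the second component $\mathsf{D}[h]$, and then verifying that the domains and codomains of $\mathsf{D}[h]\colon A \times A \to B \times C$ match those of $\langle \mathsf{D}[f], \mathsf{D}[g]\rangle$.
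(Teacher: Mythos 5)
Your proof is correct and is essentially the paper's own argument: both use the universal property of the product to write $\mathsf{D}[\langle f,g\rangle]$ as the pairing of its projection-composites, identify $\pi_1\pi_j$ with $\mathsf{D}[\pi_j]$ via \textbf{[CD.3]}, and apply the chain rule \textbf{[CD.5]} to $\langle f,g\rangle\pi_j$. The only difference is presentational --- you compute $\mathsf{D}[h\pi_j]$ first and then pair, whereas the paper runs the same equalities as a single chain starting from $\mathsf{D}[\langle f,g\rangle]$.
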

\begin{proof} Assuming only \textbf{[CD.3]} and \textbf{[CD.5]}, we have that: 
\begin{align*}
\mathsf{D}[\langle f, g \rangle] &= \mathsf{D}[\langle f, g \rangle] \langle \pi_0, \pi_1 \rangle \\
&= \langle \mathsf{D}[\langle f, g \rangle] \pi_0, \mathsf{D}[\langle f, g \rangle]  \pi_1 \rangle \\
&= \langle \langle \pi_0 \langle f, g \rangle, \mathsf{D}[\langle f, g \rangle \rangle \pi_1 \pi_0, \langle \pi_0 \langle f, g \rangle, \mathsf{D}[\langle f, g \rangle \rangle \pi_1 \pi_1 \rangle \\
&= \langle \langle \pi_0 \langle f, g \rangle, \mathsf{D}[\langle f, g \rangle \rangle \mathsf{D}[\pi_0], \langle \pi_0 \langle f, g \rangle, \mathsf{D}[\langle f, g \rangle \rangle  \mathsf{D}[\pi_1] \rangle \\
&= \langle \mathsf{D}[ \langle f, g \rangle \pi_0] , \mathsf{D}[ \langle f, g \rangle \pi_1] \rangle \\
&= \langle \mathsf{D}[f] , \mathsf{D}[ g] \rangle
\end{align*}
\end{proof} 

It is important to note $\langle 1, 0 \rangle$, $\left (1 \times (\pi_0 + \pi_1) \right)$, $\ell$, and $c$ in the axioms  \textbf{[CD.2]}, \textbf{[CD.6]}, and \textbf{[CD.7]}, as they will play fundamental roles in our construction (see Section \ref{Dsec}). First note that they can all be defined without the need of a differential combinator. Second, while $c$ is a natural transformation in the sense that $(f \times f \times f \times f) c = c (f \times f \times f \times f)$, the other three $\langle 1, 0 \rangle$, $\left (1 \times (\pi_0 + \pi_1) \right)$ and $\ell$ are not natural transformations in this same sense (though they are natural for additive maps). However, $\langle 1, 0 \rangle$, $\left (1 \times (\pi_0 + \pi_1) \right)$, $\ell$, and $c$ are natural transformations for the induced \textbf{tangent functor} of a Cartesian differential category. 

While we will not review the full definition of a tangent category (we invite the curious readers to read more on tangent categories here \cite{cockett2016differential,cockett2014differential}), a key observation to this paper is that every differential combinator induces a functor: 

\begin{proposition}\label{tangentfunctorprop} {\cite[Proposition 4.7]{cockett2014differential}} Every Cartesian differential category $\mathbb{X}$ is a tangent category where the \textbf{tangent functor} $\mathsf{T}: \mathbb{X} \to \mathbb{X}$ is defined on objects as $\mathsf{T}(A) := A \times A$ and on morphisms as $\mathsf{T}(f) := \langle \pi_0 f, \mathsf{D}[f] \rangle$. Furthermore, if $f$ is linear then $\mathsf{T}(f) = f \times f$. 
\end{proposition}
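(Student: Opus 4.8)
The plan is to establish the two concrete claims that the stated axioms actually deliver---that $\mathsf{T}$ is a functor and that it restricts to $f \times f$ on linear maps---and then to match the remaining tangent category data against the definition in the cited reference, since the full list of tangent category axioms is not reproduced in this section. The key observation making everything work is that $\mathsf{T}(f) = \langle \pi_0 f, \mathsf{D}[f] \rangle$ is exactly the pairing that appears on the left-hand side of the chain rule \textbf{[CD.5]}.

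First I would check functoriality. For identities, \textbf{[CD.3]} gives $\mathsf{D}[1_A] = \pi_1$, so $\mathsf{T}(1_A) = \langle \pi_0 1_A, \mathsf{D}[1_A] \rangle = \langle \pi_0, \pi_1 \rangle = 1_{A \times A}$. For composition, I would rewrite \textbf{[CD.5]} as $\mathsf{D}[fg] = \mathsf{T}(f)\,\mathsf{D}[g]$ and then use the universal property of the product. Computing the two sides,
\begin{align*}
\mathsf{T}(f)\mathsf{T}(g) &= \langle \pi_0 f, \mathsf{D}[f] \rangle \langle \pi_0 g, \mathsf{D}[g] \rangle = \langle \langle \pi_0 f, \mathsf{D}[f] \rangle \pi_0 g, \; \langle \pi_0 f, \mathsf{D}[f] \rangle \mathsf{D}[g] \rangle \\
&= \langle \pi_0 f g, \; \mathsf{D}[fg] \rangle = \mathsf{T}(fg),
\end{align*}
where the third equality uses $\langle \pi_0 f, \mathsf{D}[f] \rangle \pi_0 = \pi_0 f$ together with \textbf{[CD.5]}. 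Hence $\mathsf{T}$ preserves identities and composites.

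The linear case is then immediate: if $f$ is linear, by definition $\mathsf{D}[f] = \pi_1 f$, so $\mathsf{T}(f) = \langle \pi_0 f, \pi_1 f \rangle = f \times f$.

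Finally, to justify that $\mathbb{X}$ is genuinely a tangent category I would exhibit the remaining structure and observe that the relevant maps are exactly the ones already singled out after Definition \ref{cartdiffdef}: the projection $p := \pi_0$, the zero section $z := \langle 1, 0 \rangle$, the bundle addition built from $1 \times (\pi_0 + \pi_1)$, the vertical lift $\ell$, and the canonical flip $c$. Their naturality with respect to $\mathsf{T}$ and the coherence equations of a tangent category unwind precisely into \textbf{[CD.1]}--\textbf{[CD.7]} (for instance \textbf{[CD.6]} and \textbf{[CD.7]} encode the lift and flip axioms, while \textbf{[CD.2]} gives additivity of the bundle). I expect this last step to be the main obstacle---not because any individual verification is difficult, but because it needs the full definition of a tangent category and care about the point noted in the text, namely that $\langle 1, 0 \rangle$, $1 \times (\pi_0 + \pi_1)$ and $\ell$ are natural only for the tangent functor and not in the naive sense. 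For this bookkeeping I would simply appeal to \cite{cockett2014differential}.
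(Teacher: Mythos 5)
Your proposal is correct: the paper states this result without proof, citing \cite[Proposition 4.7]{cockett2014differential}, and your explicit verifications of functoriality (via \textbf{[CD.3]} for identities and the chain rule \textbf{[CD.5]} rewritten as $\mathsf{D}[fg]=\mathsf{T}(f)\mathsf{D}[g]$ for composites) and of the linear case ($\mathsf{D}[f]=\pi_1 f$ giving $\mathsf{T}(f)=\langle \pi_0 f, \pi_1 f\rangle = f\times f$) are exactly right. Your remaining appeal to the cited reference for the full tangent category axioms matches the paper's own treatment, so the approaches essentially coincide, with yours supplying slightly more detail.
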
 

For the tangent functor $\mathsf{T}$, we have that $\langle 1, 0 \rangle$, $\left (1 \times (\pi_0 + \pi_1) \right)$, $\ell$, and $c$ are all natural transformations. In fact, these are all natural transformations of the tangent category structure of a Cartesian differential category \cite[Proposition 4.7]{cockett2014differential}. In tangent category terminology \cite[Definition 2.3]{cockett2014differential}: $\pi_0$ is the projection from the tangent bundle,  $\langle 1, 0 \rangle$ is the zero vector field, $\left (1 \times (\pi_0 + \pi_1) \right)$ is the sum of tangent vectors, $\ell$ is the vertical lift, and $c$ is the canonical flip. We again note that these natural transformations were all defined without the need of a differential combinator, though the differential combinator was necessary for their naturality (in particular in defining the tangent functor). 

Using the tangent functor, the chain rule \textbf{[CD.5]} can be expressed as: 
\begin{equation}\label{chainrule}\begin{gathered} \mathsf{D}[fg] = \mathsf{T}(f) \mathsf{D}[g] \end{gathered}\end{equation}
This then gives a very clean expression for the higher-order chain rule for all $n \in \mathbb{N}$: 
\begin{equation}\label{highchainrule}\begin{gathered} \mathsf{D}^n[fg] = \mathsf{T}^n(f) \mathsf{D}^n[g] \end{gathered}\end{equation}
This simple expression of the higher-order chain rule is key and will allow us to avoid most (if not all) the combinatorial complexities of the Fa\`a di Bruno formula as in \cite{cockett2011faa}. 

\section{Pre-$\mathsf{D}$-Sequences}\label{PREDsec}

In this section we introduce and study pre-$\mathsf{D}$-sequences. Composition of pre-$\mathsf{D}$-sequences -- defined below in (\ref{predcomp}) -- is based on the higher-order chain rule of Cartesian differential categories involving the tangent functor (\ref{highchainrule}). While the category of pre-$\mathsf{D}$-sequences is not a Cartesian differential category, most of the construction of the cofree Cartesian differential category comonad can be done in this weaker setting. In particular, in Section \ref{predcomsec} we provide a comonad on the category of categories with finite products (Proposition \ref{predcom1}), and later extend it to the category of Cartesian left additive categories (Proposition \ref{predcom2}). 

\subsection{Pre-$\mathsf{D}$-Sequences}\label{PREDsubsec}

For a category $\mathbb{X}$ with finite products, consider the endofunctor $\mathsf{P}: \mathbb{X} \to \mathbb{X}$ (where $\mathsf{P}$ is for product) which is defined on objects as $\mathsf{P}(A):= A \times A$ and on maps as $\mathsf{P}(f) := f \times f$. The projection maps give natural transformations $\pi_j: \mathsf{P} \Rightarrow 1_{\mathbb{X}}$ (with $j \in \lbrace 0,1 \rbrace$). 

\begin{definition}\label{preddef} In a category with finite products, a \textbf{pre-$\mathsf{D}$-sequence} from $A$ to $B$, which we denote as $f_\bullet: A \to B$, is a sequence of maps $f_\bullet = (f_0, f_1, f_2, \hdots)$ where the $n$-th term is a map of type $f_n: \mathsf{P}^n(A) \to B$.
\end{definition}

The intuition for pre-$\mathsf{D}$-sequences are sequences of the form $(f, \mathsf{D}[f], \mathsf{D}\!\left[\mathsf{D}[f] \right], \hdots)$. This is similar to the intuition for the maps of the Fa\`a di Bruno construction \cite{cockett2011faa}, but instead of only taking partial derivatives \`a la de Rham cohomolgy, we've taken the full derivative. In the following, we will often wish to prove that two pre-$\mathsf{D}$-sequences are equal to one another, where $f_\bullet = g_\bullet$ means that $f_n = g_n$ for all $n \in \mathbb{N}$. We achieve this by either the ``internal'' method, where we directly show that $f_n = g_n$ for all $n$, or by using the ``external'' method, where we use identities of pre-$\mathsf{D}$-sequences which we will come across throughout this paper. 

One can ``scalar multiply'' pre-$\mathsf{D}$-sequences on the left and on the right by maps of the base category. Given maps $h: A^\prime \to A$ and $k: B \to C$ in $\mathbb{X}$, and a pre-$\mathsf{D}$-sequence $f_\bullet: A \to B$ of $\mathbb{X}$, we define new pre-$\mathsf{D}$-sequences $h \cdot f_\bullet: A^\prime \to B$ and $f_\bullet \cdot k: A \to C$, respectively as follows: 
\[\left( h \cdot f_\bullet \right)_n := \xymatrixcolsep{2pc}\xymatrix{\mathsf{P}^n(A^\prime) \ar[r]^-{\mathsf{P}^n(h)} & \mathsf{P}^n(A) \ar[r]^-{f_n} & B
  } \quad \quad \left(f_\bullet \cdot k \right)_n := \xymatrixcolsep{2pc}\xymatrix{\mathsf{P}^n(A) \ar[r]^-{f_n} & B \ar[r]^-{k} & C
  } \] 
One can easily check the following identities:   
  
\begin{lemma}\label{scalarlem} The following equalities hold: 
\begin{enumerate}[{\em (i)}]
\item $h_1 \cdot (h_2 \cdot f_\bullet) = (h_1 h_2) \cdot f_\bullet$; 
\item $1 \cdot f_\bullet = f_\bullet = f_\bullet \cdot 1$; 
\item $(f_\bullet \cdot k_1) \cdot k_2 = f_\bullet \cdot (k_1 k_2)$; 
\item $h \cdot (f_\bullet \cdot k) = (h \cdot f_\bullet) \cdot k$
\end{enumerate}
\end{lemma}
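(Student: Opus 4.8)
The plan is to verify all four identities by the ``internal'' method described just above the lemma, i.e.\ by showing that the $n$-th terms of the two sides agree as maps in $\mathbb{X}$ for every $n \in \mathbb{N}$. The only structural facts I will need are that $\mathsf{P}^n$ is a functor -- so that $\mathsf{P}^n(h_1 h_2) = \mathsf{P}^n(h_1)\mathsf{P}^n(h_2)$ and $\mathsf{P}^n(1) = 1$ -- together with associativity and unitality of composition in $\mathbb{X}$. Functoriality of $\mathsf{P}^n$ is immediate, since $\mathsf{P}$ is an endofunctor and any composite of functors is again a functor.

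For $(i)$, unfolding the definition of left scalar multiplication twice gives $\left(h_1 \cdot (h_2 \cdot f_\bullet)\right)_n = \mathsf{P}^n(h_1)\,\mathsf{P}^n(h_2)\,f_n$, and functoriality of $\mathsf{P}^n$ collapses the prefix $\mathsf{P}^n(h_1)\,\mathsf{P}^n(h_2)$ to $\mathsf{P}^n(h_1 h_2)$ (recall composition is in diagrammatic order), which is precisely $\left((h_1 h_2)\cdot f_\bullet\right)_n$. Identity $(ii)$ goes the same way: $\left(1 \cdot f_\bullet\right)_n = \mathsf{P}^n(1)\,f_n = f_n$ because $\mathsf{P}^n$ preserves identities, while $\left(f_\bullet \cdot 1\right)_n = f_n\, 1 = f_n$ by unitality on the right, so both sides equal $f_\bullet$.

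For $(iii)$ and $(iv)$, which concern the right action and its interaction with the left action, the terms unfold to nested composites of $\mathsf{P}^n(h)$, $f_n$, and the $k_i$, and each identity becomes a pure instance of associativity in $\mathbb{X}$. Concretely, $\left((f_\bullet \cdot k_1)\cdot k_2\right)_n = (f_n k_1) k_2 = f_n (k_1 k_2) = \left(f_\bullet \cdot (k_1 k_2)\right)_n$ for $(iii)$, and $\left(h \cdot (f_\bullet \cdot k)\right)_n = \mathsf{P}^n(h)(f_n k) = (\mathsf{P}^n(h)\, f_n) k = \left((h \cdot f_\bullet)\cdot k\right)_n$ for $(iv)$. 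No term requires anything beyond reassociating composites or invoking the functor laws of $\mathsf{P}^n$.

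There is essentially no obstacle here: the content is entirely bookkeeping, and the single point worth flagging is the functoriality of $\mathsf{P}^n$, without which neither $(i)$ nor $(ii)$ would go through. Conceptually these four identities simply record that left and right scalar multiplication each respect composition and units in $\mathbb{X}$ and that the two actions commute, which is the pre-$\mathsf{D}$-sequence analogue of the evident pre- and post-composition laws holding in any category.
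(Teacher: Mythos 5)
Your proof is correct: the paper itself leaves this lemma as an easy check for the reader, and your term-by-term (``internal'') verification using functoriality of $\mathsf{P}^n$ together with associativity and unitality of composition in $\mathbb{X}$ is exactly the intended argument. Nothing is missing, and you rightly flag that functoriality of $\mathsf{P}^n$ is the only nontrivial ingredient, needed for $(i)$ and $(ii)$.
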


Even at this early stage, we can already define a differential and tangent: 

\begin{definition}\label{tandiffseq} For a pre-$\mathsf{D}$-sequence $f_\bullet: A \to B$ we define the following two pre-$\mathsf{D}$-sequences: 
\begin{enumerate}[{\em (i)}]
\item Its \textbf{tangent} pre-$\mathsf{D}$-sequence $\mathsf{T}(f_\bullet): \mathsf{P}(A) \to \mathsf{P}(B)$ where: 
\[\mathsf{T}(f_\bullet)_n := \xymatrixcolsep{5pc}\xymatrix{\mathsf{P}^{n+1}(A) \ar[rr]^-{\langle \mathsf{P}^n(\pi_0) f_n , f_{n+1} \rangle} && \mathsf{P}(B)
  } \]
 \item  Its \textbf{differential} pre-$\mathsf{D}$-sequence $\mathsf{D}[f_\bullet]: \mathsf{P}(A) \to B$ where $\mathsf{D}[f_\bullet]_n := f_{n+1}$.
\end{enumerate}
\end{definition}
Looking forward, $\mathsf{D}$ will indeed provide the desired differential combinator, and $\mathsf{T}$ will be its the induced tangent functor (Corollary \ref{DseqCDC}). 

\begin{lemma}\label{Tprop1} The following equalities hold: 
\begin{enumerate}[{\em (i)}]
\item $\mathsf{T}(h \cdot f_\bullet) = \mathsf{P}(h) \cdot \mathsf{T}(f_\bullet)$; 
\item $\mathsf{T}(f_\bullet \cdot k) = \mathsf{T}(f_\bullet) \cdot \mathsf{P}(k)$;
\item $\pi_0 \cdot f_\bullet = \mathsf{T}(f_\bullet) \cdot \pi_0$;
\item $\mathsf{T}(f_\bullet) \cdot \pi_1 = \mathsf{D}[f_\bullet]$;
\item $\mathsf{D}[h \cdot f_\bullet] = \mathsf{P}(h) \cdot \mathsf{D}[f_\bullet]$;
\item $\mathsf{D}[f_\bullet \cdot k] = \mathsf{D}[f_\bullet] \cdot k$. 
\end{enumerate}
\end{lemma}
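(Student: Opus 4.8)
The plan is to prove all six identities by the ``internal'' method: I would fix $n \in \mathbb{N}$ and compute the $n$-th terms of the two sides, checking that they agree as maps of the base category $\mathbb{X}$. The only ingredients I expect to need are the definitions of $\mathsf{T}$, $\mathsf{D}$, and the two scalar multiplications, together with four elementary facts about finite products: functoriality of $\mathsf{P}$ (so that $\mathsf{P}^m(h)\mathsf{P}^m(h') = \mathsf{P}^m(hh')$ and $\mathsf{P}^n(\mathsf{P}(h)) = \mathsf{P}^{n+1}(h)$), the precomposition law $g\langle a, b\rangle = \langle ga, gb\rangle$, the postcomposition law $\langle a, b\rangle(k \times k) = \langle ak, bk\rangle$ (recalling that $\mathsf{P}(k) = k \times k$), and the projection equations $\langle a, b\rangle \pi_0 = a$ and $\langle a, b\rangle \pi_1 = b$.

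I would dispatch the five routine identities first. For (v) and (vi), unfolding $\mathsf{D}[f_\bullet]_n = f_{n+1}$ turns each side into a plain composite: $\mathsf{D}[h \cdot f_\bullet]_n = (h \cdot f_\bullet)_{n+1} = \mathsf{P}^{n+1}(h) f_{n+1}$ matches $(\mathsf{P}(h) \cdot \mathsf{D}[f_\bullet])_n = \mathsf{P}^{n+1}(h) f_{n+1}$, and likewise $\mathsf{D}[f_\bullet \cdot k]_n = f_{n+1} k = (\mathsf{D}[f_\bullet] \cdot k)_n$. For (iii) and (iv) I would feed the defining pairing $\mathsf{T}(f_\bullet)_n = \langle \mathsf{P}^n(\pi_0) f_n, f_{n+1}\rangle$ into a projection: postcomposing with $\pi_0$ selects $\mathsf{P}^n(\pi_0) f_n = (\pi_0 \cdot f_\bullet)_n$, while postcomposing with $\pi_1$ selects $f_{n+1} = \mathsf{D}[f_\bullet]_n$. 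For (ii), since $(f_\bullet \cdot k)_m = f_m k$, the left side is $\langle \mathsf{P}^n(\pi_0) f_n k, f_{n+1} k\rangle$, and the right side $\langle \mathsf{P}^n(\pi_0) f_n, f_{n+1}\rangle (k \times k)$ equals the same thing by the postcomposition law.

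The one identity requiring a genuine idea, and hence the main obstacle, is (i), because there the left scalar multiplication acts inside a pairing whose first component already contains $\mathsf{P}^n(\pi_0)$. Expanding the left side gives $\mathsf{T}(h \cdot f_\bullet)_n = \langle \mathsf{P}^n(\pi_0)\mathsf{P}^n(h) f_n, \mathsf{P}^{n+1}(h) f_{n+1}\rangle$, whereas expanding the right side and pushing $\mathsf{P}^{n+1}(h) = \mathsf{P}^n(\mathsf{P}(h))$ through the pairing by the precomposition law gives $\langle \mathsf{P}^{n+1}(h)\mathsf{P}^n(\pi_0) f_n, \mathsf{P}^{n+1}(h) f_{n+1}\rangle$. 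The first components agree precisely because $\pi_0 \colon \mathsf{P} \Rightarrow 1_{\mathbb{X}}$ is a natural transformation: its naturality square for $h$ reads $\mathsf{P}(h)\pi_0 = \pi_0 h$ in diagrammatic order, and applying $\mathsf{P}^n$ yields $\mathsf{P}^{n+1}(h)\mathsf{P}^n(\pi_0) = \mathsf{P}^n(\pi_0)\mathsf{P}^n(h)$, which is exactly what is needed. So the crux is recognizing that naturality of the projection $\pi_0$ is what lets left scalar multiplication commute past the $\pi_0$ built into the definition of $\mathsf{T}$; the remaining five identities are straightforward unfoldings.
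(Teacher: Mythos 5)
Your proposal is correct, and for items (i)--(iv) it coincides with the paper's proof: the same internal term-by-term computations, with the same key observation that (i) hinges on naturality of $\pi_0 \colon \mathsf{P} \Rightarrow 1_{\mathbb{X}}$ applied under $\mathsf{P}^n$ (giving $\mathsf{P}^{n+1}(h)\mathsf{P}^n(\pi_0) = \mathsf{P}^n(\pi_0)\mathsf{P}^n(h)$), which is exactly the step the paper flags. Where you diverge is in (v) and (vi): the paper proves these \emph{externally}, deriving (v) from (iv), (i), and the associativity identities of Lemma \ref{scalarlem} via $\mathsf{D}[h \cdot f_\bullet] = \mathsf{T}(h \cdot f_\bullet)\cdot \pi_1$, and (vi) similarly from (iv), (ii), and naturality of $\pi_1$ (i.e.\ $\mathsf{P}(k)\pi_1 = \pi_1 k$), whereas you unfold the shift $\mathsf{D}[f_\bullet]_n = f_{n+1}$ directly and never touch $\mathsf{T}$ at all. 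Your internal route is the more elementary one --- (v) and (vi) really are immediate from the definitions, and notably your version needs no naturality of $\pi_1$ --- while the paper's external derivations serve a rhetorical purpose, demonstrating the calculus of $\cdot$, $\mathsf{T}$, and $\mathsf{D}$ that carries the heavier proofs later (e.g.\ Lemma \ref{Tfunctor} and Proposition \ref{Diff1}). Incidentally, your direct computation also sidesteps a typographical slip in the paper's proof of (v), where $\mathsf{T}(f_\bullet)\cdot\pi_0$ and the final term $\mathsf{P}(h)\cdot f_\bullet$ should read $\mathsf{T}(f_\bullet)\cdot\pi_1$ and $\mathsf{P}(h)\cdot \mathsf{D}[f_\bullet]$; your argument is unaffected.
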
  
\begin{proof} 
\begin{enumerate}[{\em (i)}]
\item Here we use the naturality of $\pi_0$ with respect to $\mathsf{P}$: 
\begin{align*}
\mathsf{T}(h \cdot f_\bullet)_n &=~ \langle \mathsf{P}^n(\pi_0) (h \cdot f_\bullet)_n , (h \cdot f_\bullet)_{n+1} \rangle \\
&=~ \langle \mathsf{P}^n(\pi_0) \mathsf{P}^n(h) f_n, \mathsf{P}^{n+1}(h) f_{n+1} \rangle \\
&=~ \langle \mathsf{P}^{n+1}(h) \mathsf{P}^n(\pi_0) f_n, \mathsf{P}^{n+1}(h) f_{n+1} \rangle \\
&=~ \mathsf{P}^{n+1}(h) \langle \mathsf{P}^n(\pi_0) f_n , f_{n+1} \rangle \\
&=~  \mathsf{P}^{n+1}(h) \mathsf{T}(f_\bullet)_n\\
&=~ \left( \mathsf{P}(h) \cdot \mathsf{T}(f_\bullet) \right)_n
\end{align*}
\item Follows mostly by definition: 
\begin{align*}
\mathsf{T}(f_\bullet \cdot k)_n &=~ \langle \mathsf{P}^n(\pi_0) (f_\bullet \cdot k)_n , (f_\bullet \cdot k)_{n+1} \rangle \\
&=~ \langle \mathsf{P}^n(\pi_0) f_n k , f_{n+1} k \rangle \\
&=~ \langle \mathsf{P}^n(\pi_0) f_n , f_{n+1} \rangle (k \times k) \\
&=~  \mathsf{T}(f_\bullet)_n \mathsf{P}(k) \\
&=~ \left( \mathsf{T}(f_\bullet) \cdot \mathsf{P}(k) \right)_n 
\end{align*}
\item Follows by definition: 
\begin{align*}
\left( \mathsf{T}(f_\bullet) \cdot \pi_0 \right)_n = \mathsf{T}(f_\bullet)_n \pi_0=  \langle \mathsf{P}^n(\pi_0) f_n , f_{n+1} \rangle \pi_0 =  \mathsf{P}^n(\pi_0) f_n  = \left( \pi_0 \cdot f_\bullet  \right)_n 
\end{align*}
\item Follows by definition:
\begin{align*}
\left( \mathsf{T}(f_\bullet) \cdot \pi_1 \right)_n &=~ \mathsf{T}(f_\bullet)_n \pi_1 =\langle \mathsf{P}^n(\pi_0) f_n , f_{n+1} \rangle \pi_1 = f_{n+1} = \mathsf{D}[f_\bullet]_n
\end{align*}
\item Follows from (iv), (i), and Lemma \ref{scalarlem} (iv): 
\begin{align*}
\mathsf{D}[h \cdot f_\bullet] = \mathsf{T}(h \cdot f_\bullet) \cdot \pi_1 = \left(\mathsf{P}(h) \cdot \mathsf{T}(f_\bullet) \right) \cdot \pi_1 = \mathsf{P}(h) \cdot (\mathsf{T}(f_\bullet) \cdot \pi_0) = \mathsf{P}(h) \cdot f_\bullet
\end{align*}
\item Here we use naturality of $\pi_1$, (iv), (ii), and Lemma \ref{scalarlem} (iii): 
\begin{align*}
\mathsf{D}[f_\bullet \cdot k] &=~ \mathsf{T}(f_\bullet \cdot k) \cdot \pi_1 \\
&=~ (\mathsf{T}(f_\bullet) \cdot \mathsf{P}(k) ) \cdot \pi_1 \\
&=~ \mathsf{T}(f_\bullet) \cdot (\mathsf{P}(k) \pi_1) \\
&=~ \mathsf{T}(f_\bullet) \cdot (\pi_1 k) \\
&=~ (\mathsf{T}(f_\bullet) \cdot \pi_1) \cdot k \\
&=~ \mathsf{D}[f_\bullet] \cdot k
\end{align*}
\end{enumerate}
\end{proof} 

\begin{definition}\label{predcat} For a category $\mathbb{X}$ with finite products, we define its category of pre-$\mathsf{D}$-sequences $\overline{\mathcal{D}}[\mathbb{X}]$ as follows:
\begin{enumerate}
\item Objects of $\overline{\mathcal{D}}[\mathbb{X}]$ are objects of $\mathbb{X}$;
\item Maps of $\overline{\mathcal{D}}[\mathbb{X}]$ are pre-$\mathsf{D}$-sequences $f_\bullet: A \to B$;
\item The identity is the pre-$\mathsf{D}$-sequence $i_\bullet: A \to A$ where for $n\geq 1$:
\begin{equation}\label{predid}\begin{gathered}i_n := \xymatrixcolsep{4.5pc}\xymatrix{\mathsf{P}^n(A) \ar[r]^-{\pi_1} & \mathsf{P}^{n-1}(A) \ar[r]^-{\pi_1} & \hdots  \ar[r]^-{\pi_1}& \mathsf{P}(A) \ar[r]^-{\pi_1} & A
  } \end{gathered}\end{equation}
  and $i_0 := 1_A$. 
  \item Composition of pre-$\mathsf{D}$-sequences $f_\bullet: A \to B$ and $g_\bullet: B \to C$ is the pre-$\mathsf{D}$-sequence ${f_\bullet \ast g_\bullet: A \to C}$ where: 
  \begin{equation}\label{predcomp}\begin{gathered}\left(f_\bullet \ast g_\bullet \right)_n := \xymatrixcolsep{5pc}\xymatrix{\mathsf{P}^n(A) \ar[r]^-{\mathsf{T}^n(f_\bullet)_0} &   \mathsf{P}^n(B) \ar[r]^-{g_n} & C 
  } \end{gathered}\end{equation}
\end{enumerate}
\end{definition}

Recall that pre-$\mathsf{D}$-sequences should be thought of as $f_\bullet = (f, \mathsf{D}[f], \hdots)$. By \textbf{[CD.3]}, the identity $i_\bullet$ is precisely $(1, \mathsf{D}[1], \hdots)$. The composition $(f_\bullet \ast g_\bullet)_n$ is the analogue of the higher-order chain rule $\mathsf{D}^n[fg] = \mathsf{T}^n(f) \mathsf{D}^n[g]$. At first glance, $\mathsf{T}^n(f_\bullet)_0$ in the composition may seem intimidating, however by the functorial properties of $\mathsf{T}$, the composition of pre-$\mathsf{D}$-sequences is easy to work with, and will allow us to avoid the combinatorics of \cite{cockett2011faa}.   

Strangely, before proving that $\overline{\mathcal{D}}[\mathbb{X}]$ is a well-defined category, we show the functorial properties of $\mathsf{T}$: 

\begin{lemma}\label{Tfunctor} The following equalities hold: 
\begin{enumerate}[{\em (i)}]
\item $\mathsf{T}(i_\bullet) = i_\bullet$;
\item $\mathsf{T}(f_\bullet \ast g_\bullet)=  \mathsf{T}(f_\bullet) \ast \mathsf{T}(g_\bullet)$. 
\end{enumerate}
\end{lemma}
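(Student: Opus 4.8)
The plan is to verify both identities termwise (the ``internal'' method of the paper), noticing that each reduces to commuting a projection of the form $\mathsf{P}^n(\pi_0)$ past an $n$-fold iterate. Part (i) serves as a warm-up and part (ii) is the crux.

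For (i), note that $\mathsf{T}(i_\bullet)$ and the identity $i_\bullet$ are both pre-$\mathsf{D}$-sequences on the object $\mathsf{P}(A)$, so I read the claim as $\mathsf{T}(i^A_\bullet) = i^{\mathsf{P}(A)}_\bullet$, writing $i^X_\bullet$ for the identity on $X$. Unwinding Definition \ref{tandiffseq}(i) gives $\mathsf{T}(i^A_\bullet)_n = \langle \mathsf{P}^n(\pi_0)\, i^A_n,\, i^A_{n+1}\rangle$. Straight from the definition (\ref{predid}) of the identity one has $i^{\mathsf{P}(A)}_n\,\pi_1 = i^A_{n+1}$, since each side is a string of $n+1$ projections $\pi_1$. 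Thus it suffices to prove $\mathsf{P}^n(\pi_0)\, i^A_n = i^{\mathsf{P}(A)}_n\,\pi_0$, for then $\mathsf{T}(i^A_\bullet)_n = \langle i^{\mathsf{P}(A)}_n\pi_0,\, i^{\mathsf{P}(A)}_n\pi_1\rangle = i^{\mathsf{P}(A)}_n$. I would establish $\mathsf{P}^n(\pi_0)\, i^A_n = i^{\mathsf{P}(A)}_n\,\pi_0$ by induction on $n$: the base case is immediate, and in the step I rewrite $i^A_{n+1} = \pi_1\, i^A_n$, commute $\mathsf{P}^{n+1}(\pi_0)$ past the leading $\pi_1$ using naturality of $\pi_1\colon \mathsf{P}\Rightarrow 1_{\mathbb{X}}$, and invoke the inductive hypothesis.

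For (ii), unwinding the definition (\ref{predcomp}) of $\ast$ and of $\mathsf{T}$ presents both sides as pairings into $\mathsf{P}(C)$:
\[ \mathsf{T}(f_\bullet \ast g_\bullet)_n = \langle\, \mathsf{P}^n(\pi_0)\,\mathsf{T}^n(f_\bullet)_0\, g_n,\ \ \mathsf{T}^{n+1}(f_\bullet)_0\, g_{n+1}\,\rangle, \]
\[ \big(\mathsf{T}(f_\bullet) \ast \mathsf{T}(g_\bullet)\big)_n = \langle\, \mathsf{T}^{n+1}(f_\bullet)_0\,\mathsf{P}^n(\pi_0)\, g_n,\ \ \mathsf{T}^{n+1}(f_\bullet)_0\, g_{n+1}\,\rangle, \]
where I have used $\mathsf{T}^n(\mathsf{T}(f_\bullet))_0 = \mathsf{T}^{n+1}(f_\bullet)_0$. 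The second components already coincide, so the entire claim collapses to the single identity $\mathsf{P}^n(\pi_0)\,\mathsf{T}^n(f_\bullet)_0 = \mathsf{T}^{n+1}(f_\bullet)_0\,\mathsf{P}^n(\pi_0)$, which I will call $(\star)$.

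I expect $(\star)$ to be the main obstacle. A direct induction on $n$ is awkward, because expanding $\mathsf{T}^{n+1}(f_\bullet)_0$ drags in the unrelated higher term $\mathsf{T}^n(f_\bullet)_1$. Instead I would argue externally, at the level of whole pre-$\mathsf{D}$-sequences. First, a one-line induction on $n$ promotes Lemma \ref{Tprop1}(i) and (ii) to $\mathsf{T}^n(h \cdot f_\bullet) = \mathsf{P}^n(h) \cdot \mathsf{T}^n(f_\bullet)$ and $\mathsf{T}^n(f_\bullet \cdot k) = \mathsf{T}^n(f_\bullet) \cdot \mathsf{P}^n(k)$. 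Applying the first to $h = \pi_0$, then rewriting $\pi_0 \cdot f_\bullet = \mathsf{T}(f_\bullet) \cdot \pi_0$ by Lemma \ref{Tprop1}(iii), and finally applying the second, gives
\[ \mathsf{P}^n(\pi_0) \cdot \mathsf{T}^n(f_\bullet) = \mathsf{T}^n(\pi_0 \cdot f_\bullet) = \mathsf{T}^n\big(\mathsf{T}(f_\bullet) \cdot \pi_0\big) = \mathsf{T}^{n+1}(f_\bullet) \cdot \mathsf{P}^n(\pi_0). \]
Reading off the $0$-th terms of the two ends, via $(h \cdot F_\bullet)_0 = h\, F_0$ and $(F_\bullet \cdot k)_0 = F_0\, k$, yields exactly $(\star)$, finishing (ii). The moral is that the scalar-multiplication calculus of Lemmas \ref{scalarlem} and \ref{Tprop1} is precisely what lets one avoid iterating $\mathsf{T}$ by hand; beyond $(\star)$, everything is routine bookkeeping with the definitions of $\ast$, $\mathsf{T}$, $\mathsf{D}$, and the identity sequence.
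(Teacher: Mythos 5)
Your proposal is correct and takes essentially the same route as the paper: your key identity $(\star)$ is established by exactly the paper's chain $\mathsf{P}^n(\pi_0) \cdot \mathsf{T}^n(f_\bullet) = \mathsf{T}^n(\pi_0 \cdot f_\bullet) = \mathsf{T}^n\big(\mathsf{T}(f_\bullet) \cdot \pi_0\big) = \mathsf{T}^{n+1}(f_\bullet) \cdot \mathsf{P}^n(\pi_0)$, using the iterated forms of Lemma \ref{Tprop1} (i)--(iii) and then comparing pairings componentwise. The only cosmetic difference is in (i), where the paper simply invokes that $i_n \colon \mathsf{P}^n \Rightarrow 1_{\mathbb{X}}$ is a natural transformation with $i_{n+1} = i_n \pi_1$, while you verify the needed instance $\mathsf{P}^n(\pi_0)\, i_n = i_n\, \pi_0$ by an explicit induction on $n$.
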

\begin{proof}
\begin{enumerate}[{\em (i)}]
\item Notice that for each $n \in \mathbb{N}$, we have a natural transformation $i_n: \mathsf{P}^n \Rightarrow 1_{\mathbb{X}}$ and that $i_{n+1} = i_n \pi_1$. Therefore, we obtain the following: 
\begin{align*}
\mathsf{T}(i_\bullet)_n = \langle \mathsf{P}^n(\pi_0) i_n , i_{n+1} \rangle = \langle i_n \pi_0, i_n \pi_1 \rangle = i_n \langle \pi_0, \pi_1 \rangle = i_n
\end{align*}
\item Here we use identities from Lemma \ref{Tprop1}: 
\begin{align*}
\mathsf{T}(f_\bullet \ast g_\bullet)_n &=~ \langle \mathsf{P}^n(\pi_0) \left(f_\bullet \ast g_\bullet\right)_n , \left(f_\bullet \ast g_\bullet\right)_{n+1} \rangle \\
&=~ \langle \mathsf{P}^n(\pi_0) \mathsf{T}^n(f_\bullet)_0 g_n , \mathsf{T}^{n+1}(f_\bullet)_0 g_{n+1} \rangle \\
&=~ \langle \mathsf{T}^n(\pi_0 \cdot f_\bullet)_0 g_n , \mathsf{T}^{n+1}(f_\bullet)_0 g_{n+1} \rangle \tag{Lemma \ref{Tprop1} (i)}\\
&=~ \langle \mathsf{T}^n\left (\mathsf{T}(f_\bullet) \cdot \pi_0 \right)_0 g_n , \mathsf{T}^{n+1}(f_\bullet)_0 g_{n+1} \rangle \tag{Lemma \ref{Tprop1} (iii)} \\
&=~ \langle \left(\mathsf{T}^{n+1}(f_\bullet) \cdot \mathsf{P}^n(\pi_0) \right)_0 g_n , \mathsf{T}^{n+1}(f_\bullet)_0 g_{n+1} \rangle \tag{Lemma \ref{Tprop1} (i)} \\
&=~ \langle  \mathsf{T}^{n+1}(f_\bullet)_0 \mathsf{P}^n(\pi_0) g_n , \mathsf{T}^{n+1}(f_\bullet)_0 g_{n+1} \rangle \\
&=~ \mathsf{T}^{n+1}(f_\bullet)_0 \langle \mathsf{P}^n(\pi_0) g_n , g_{n+1} \rangle \\
&=~ \mathsf{T}^{n}\left(\mathsf{T}(f_\bullet)\right)_0 \mathsf{T}(g_\bullet)_n \\
&=~ \left( \mathsf{T}(f_\bullet) \ast \mathsf{T}(g_\bullet) \right)_n
\end{align*}
\end{enumerate}
\end{proof} 

\begin{proposition} $\overline{\mathcal{D}}[\mathbb{X}]$ is a category.
\end{proposition}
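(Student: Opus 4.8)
The plan is to check the three category axioms for $\overline{\mathcal{D}}[\mathbb{X}]$: that $\ast$ is well defined on pre-$\mathsf{D}$-sequences, that it is associative, and that $i_\bullet$ is a two-sided unit. Well-definedness is immediate from the types appearing in (\ref{predcomp}): the map $\mathsf{T}^n(f_\bullet)_0$ has type $\mathsf{P}^n(A) \to \mathsf{P}^n(B)$ and $g_n$ has type $\mathsf{P}^n(B) \to C$, so $(f_\bullet \ast g_\bullet)_n : \mathsf{P}^n(A) \to C$ is exactly the $n$-th term of a pre-$\mathsf{D}$-sequence $A \to C$. The real content lies in associativity and the unit laws, and the engine for all of it is Lemma \ref{Tfunctor}, which is precisely why it is proved first.

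For associativity I would first upgrade Lemma \ref{Tfunctor} (ii) from $\mathsf{T}$ to $\mathsf{T}^n$: an easy induction on $n$, applying part (ii) at each step, gives $\mathsf{T}^n(f_\bullet \ast g_\bullet) = \mathsf{T}^n(f_\bullet) \ast \mathsf{T}^n(g_\bullet)$. Taking $0$-th components and using that $(a_\bullet \ast b_\bullet)_0 = a_0 b_0$ yields $\mathsf{T}^n(f_\bullet \ast g_\bullet)_0 = \mathsf{T}^n(f_\bullet)_0\, \mathsf{T}^n(g_\bullet)_0$. Both sides of the associativity equation then collapse to the same map:
\[((f_\bullet \ast g_\bullet) \ast h_\bullet)_n = \mathsf{T}^n(f_\bullet)_0\, \mathsf{T}^n(g_\bullet)_0\, h_n = (f_\bullet \ast (g_\bullet \ast h_\bullet))_n,\]
where the left equality expands the outer composite and substitutes the formula just derived, and the right equality expands the inner composite $(g_\bullet \ast h_\bullet)_n = \mathsf{T}^n(g_\bullet)_0\, h_n$.

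The left unit law is equally quick: Lemma \ref{Tfunctor} (i) together with induction gives $\mathsf{T}^n(i_\bullet) = i_\bullet$, hence $\mathsf{T}^n(i_\bullet)_0 = i_0 = 1$, so $(i_\bullet \ast g_\bullet)_n = g_n$. The right unit law, $(f_\bullet \ast i_\bullet)_n = \mathsf{T}^n(f_\bullet)_0\, i_n = f_n$, is the one genuinely nontrivial point, since here $i_\bullet$ sits on the non-shifted side and $i_n$ is the $n$-fold projection $\mathsf{P}^n(B) \to B$. I would prove the identity $\mathsf{T}^n(f_\bullet)_0\, i_n = f_n$ by induction on $n$, quantified over all pre-$\mathsf{D}$-sequences $f_\bullet$ so that the hypothesis can be reapplied to a different sequence. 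The base case $n = 0$ is $f_0 \cdot 1 = f_0$. For the step, write $\mathsf{T}^{n+1}(f_\bullet) = \mathsf{T}^n(\mathsf{T}(f_\bullet))$ and apply the inductive hypothesis to the pre-$\mathsf{D}$-sequence $\mathsf{T}(f_\bullet)$, obtaining $\mathsf{T}^{n+1}(f_\bullet)_0\, i_n = \mathsf{T}(f_\bullet)_n = \langle \mathsf{P}^n(\pi_0) f_n, f_{n+1} \rangle$ (here $i_n : \mathsf{P}^{n+1}(B) \to \mathsf{P}(B)$). Using the recursion $i_{n+1} = i_n \pi_1$ noted in the proof of Lemma \ref{Tfunctor} (i) and then projecting via $\pi_1$, this becomes $\langle \mathsf{P}^n(\pi_0) f_n, f_{n+1} \rangle \pi_1 = f_{n+1}$, completing the induction.

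The main obstacle is exactly this right-unit induction: one must track the types carefully, since the same symbol $i_n$ denotes the $n$-fold projection at different objects, and set up the induction with the correct universal quantifier so that it can be instantiated at $\mathsf{T}(f_\bullet)$. Associativity and the left unit, by contrast, follow almost formally once Lemma \ref{Tfunctor} has been extended to iterates of $\mathsf{T}$.
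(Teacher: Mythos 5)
Your proof is correct and takes essentially the same route as the paper: well-definedness from the typing of (\ref{predcomp}), associativity and the left unit law via iterates of Lemma \ref{Tfunctor}, and the right unit law via the chain of $\pi_1$-projections using $i_{n+1} = i_n \pi_1$ and $\mathsf{T}(f_\bullet)_n\,\pi_1 = f_{n+1}$. The only difference is presentational: where the paper telescopes $\mathsf{T}^{n+1}(f_\bullet)_0\, i_{n+1}$ informally with an ellipsis, you package the identical computation as an induction quantified over all pre-$\mathsf{D}$-sequences and instantiated at $\mathsf{T}(f_\bullet)$, which is a rigorous formalization of the same argument.
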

\begin{proof} 
First we prove associativity, $f_\bullet \ast (g_\bullet \ast h_\bullet)= (f_\bullet \ast g_\bullet) \ast h_\bullet$, using Lemma \ref{Tfunctor} (ii): 
\begin{align*}
(f_\bullet \ast (g_\bullet \ast h_\bullet))_n &=~Ê\mathsf{T}^n(f_\bullet)_0 (g_\bullet \ast h_\bullet)_n = \mathsf{T}^n(f_\bullet)_0 \mathsf{T}^n(g_\bullet)_0 h_n = \mathsf{T}^n(f_\bullet \ast g_\bullet)_0 h_n  = ((f_\bullet \ast g_\bullet) \ast h_\bullet)_n 
\end{align*}
Now we prove that $(i_\bullet \ast f_\bullet)=f_\bullet$ using Lemma \ref{Tfunctor} (i):
\begin{align*}
(i_\bullet \ast f_\bullet)_n = \mathsf{T}^n(i_\bullet)_0 f_n = i_0 f_n  = f_n 
\end{align*}
Lastly we show that $(f_\bullet \ast i_\bullet)=f_\bullet$ by looking at when $n=0$ or when $n \leq 1$. The case $n=0$ is automatic since $i_0 = 1$:
\[(f_\bullet \ast i_\bullet)_0 = f_0 i_0 = f_0\]
For the remaining cases, we have that: 
\begin{align*}
(f_\bullet \ast i_\bullet)_{n+1} &=~ \mathsf{T}^{n+1}(f_\bullet)_0 i_{n+1}\\
&=~ \langle \pi_0 \mathsf{T}^{n}(f)_0, \mathsf{T}^{n}(f)_1 \rangle \underbrace{\pi_1 \hdots \pi_1}_{n+1 \text{ times}}\\
&=~ \mathsf{T}^{n}(f)_1 \underbrace{\pi_1 \hdots \pi_1}_{n \text{ times}}\\
&=~ \hdots \\
&=~ \mathsf{T}(f)_{n} \pi_1 \\
&=~ f_{n+1} 
\end{align*}

\end{proof}

By Lemma \ref{Tfunctor}, we also obtain a proper endofunctor $\mathsf{T}:  \overline{\mathcal{D}}[\mathbb{X}] \to  \overline{\mathcal{D}}[\mathbb{X}]$ defined on objects as $\mathsf{T}(A) := \mathsf{P}(A) = A \times A$ and mapping pre-$\mathsf{D}$-sequences to their tangent pre-$\mathsf{D}$-sequence. 

\begin{corollary}\label{Tendo} $\mathsf{T}: \overline{\mathcal{D}}[\mathbb{X}] \to \overline{\mathcal{D}}[\mathbb{X}]$ is a functor.  
\end{corollary}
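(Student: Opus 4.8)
The plan is to verify the two functoriality axioms---preservation of identities and preservation of composition---which are precisely the two clauses of Lemma \ref{Tfunctor}. Before invoking these, I would first confirm that $\mathsf{T}$ is well-defined as an assignment on $\overline{\mathcal{D}}[\mathbb{X}]$. On objects it sends $A$ to $\mathsf{P}(A) = A \times A$, which is again an object of $\overline{\mathcal{D}}[\mathbb{X}]$ since these are just the objects of $\mathbb{X}$. On a pre-$\mathsf{D}$-sequence $f_\bullet: A \to B$ it produces the tangent pre-$\mathsf{D}$-sequence $\mathsf{T}(f_\bullet)$ of Definition \ref{tandiffseq} (i). The only thing to check here is a type count: the $n$-th term $\mathsf{T}(f_\bullet)_n = \langle \mathsf{P}^n(\pi_0) f_n, f_{n+1} \rangle$ has domain $\mathsf{P}^{n+1}(A) = \mathsf{P}^n(\mathsf{P}(A))$ and codomain $B \times B = \mathsf{P}(B)$, so that $\mathsf{T}(f_\bullet)$ is indeed a pre-$\mathsf{D}$-sequence of type $\mathsf{P}(A) \to \mathsf{P}(B)$, i.e. a morphism $\mathsf{T}(A) \to \mathsf{T}(B)$ in $\overline{\mathcal{D}}[\mathbb{X}]$.

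With well-definedness in hand, the two functor axioms follow immediately. Preservation of identities is the assertion $\mathsf{T}(i_\bullet) = i_\bullet$, which is exactly Lemma \ref{Tfunctor} (i); note that the $i_\bullet$ on the left is the identity on $A$ while the one on the right is the identity on $\mathsf{T}(A) = \mathsf{P}(A)$, and the equality of their terms is what that lemma establishes. Preservation of composition is the assertion $\mathsf{T}(f_\bullet \ast g_\bullet) = \mathsf{T}(f_\bullet) \ast \mathsf{T}(g_\bullet)$, which is exactly Lemma \ref{Tfunctor} (ii). Together these say that $\mathsf{T}$ respects the categorical structure of $\overline{\mathcal{D}}[\mathbb{X}]$, completing the verification.

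Since all the substantive computation was already carried out in Lemma \ref{Tfunctor}, I do not expect any genuine obstacle here: the corollary is simply the repackaging of that lemma into the single statement that $\mathsf{T}$ is a functor. If anything, the only point requiring a moment's care is the bookkeeping in the well-definedness check, namely identifying $\mathsf{P}^n(\mathsf{P}(A))$ with $\mathsf{P}^{n+1}(A)$ so that the domains of $\mathsf{T}(f_\bullet)_n$ line up with the indexing convention for a pre-$\mathsf{D}$-sequence out of $\mathsf{P}(A)$.
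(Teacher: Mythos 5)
Your proposal is correct and matches the paper's own treatment: the corollary is stated there as an immediate consequence of Lemma \ref{Tfunctor}, whose two clauses are exactly preservation of identities and of composition. Your additional type-check that $\mathsf{T}(f_\bullet)_n$ has domain $\mathsf{P}^{n+1}(A) = \mathsf{P}^n(\mathsf{P}(A))$ and codomain $\mathsf{P}(B)$ -- so that $\mathsf{T}$ is well-defined on morphisms -- is left implicit in the paper (it is built into Definition \ref{tandiffseq} (i)) but is a sensible thing to make explicit.
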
 

Composition of pre-$\mathsf{D}$-sequences is compatible with scalar multiplication (which we leave as an exercise to the reader): 

\begin{lemma}\label{astprop1} The following equalities hold: 
\begin{enumerate}[{\em (i)}]
\item $h \cdot i_\bullet = i_\bullet \cdot h$;
\item $(h \cdot f_\bullet) \ast g_\bullet = h \cdot (f_\bullet \ast g_\bullet)$; 
\item $f_\bullet \ast (g_\bullet \cdot k) = (f_\bullet \ast g_\bullet) \cdot k$; 
\item $(f_\bullet \cdot k) \ast g_\bullet = f_\bullet \ast (k \cdot g_\bullet)$;
\item $(h \cdot i_\bullet) \ast f_\bullet = h \cdot f_\bullet$; 
\item $f _\bullet \ast (i_\bullet \cdot k) = f_\bullet \cdot k$. 
\end{enumerate}
\end{lemma}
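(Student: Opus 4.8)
The plan is to verify all six identities termwise, reducing each to the explicit formulas $(h \cdot f_\bullet)_n = \mathsf{P}^n(h) f_n$ and $(f_\bullet \cdot k)_n = f_n k$ for scalar multiplication, the composition formula $(f_\bullet \ast g_\bullet)_n = \mathsf{T}^n(f_\bullet)_0\, g_n$ from (\ref{predcomp}), and the identities of Lemma \ref{Tprop1}. Most of the work is a single preliminary observation: by iterating Lemma \ref{Tprop1} (i) and (ii) I obtain $\mathsf{T}^n(h \cdot f_\bullet) = \mathsf{P}^n(h) \cdot \mathsf{T}^n(f_\bullet)$ and $\mathsf{T}^n(f_\bullet \cdot k) = \mathsf{T}^n(f_\bullet) \cdot \mathsf{P}^n(k)$ for all $n$, each by a routine induction whose inductive step applies the $n=1$ case (Lemma \ref{Tprop1} (i), (ii)) to the inductive hypothesis, using functoriality of $\mathsf{P}$ so that $\mathsf{P}(\mathsf{P}^n(h)) = \mathsf{P}^{n+1}(h)$. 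These two facts are the real content; the rest is bookkeeping.

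For (i), I note, exactly as in the proof of Lemma \ref{Tfunctor} (i), that $i_n : \mathsf{P}^n \Rightarrow 1_{\mathbb{X}}$ is a natural transformation, so $\mathsf{P}^n(h) i_n = i_n h$; termwise this is precisely $h \cdot i_\bullet = i_\bullet \cdot h$, with the case $n=0$ trivial since $i_0 = 1$. Identity (iii) is immediate, since both sides have $n$-th term $\mathsf{T}^n(f_\bullet)_0\, g_n\, k$. For (ii), the $n$-th term of the left side is $\mathsf{T}^n(h \cdot f_\bullet)_0\, g_n$, which by the preliminary observation equals $\mathsf{P}^n(h)\,\mathsf{T}^n(f_\bullet)_0\, g_n$ (taking the $0$-th term, since $\mathsf{P}^0$ is the identity); this is exactly the $n$-th term of $h \cdot (f_\bullet \ast g_\bullet)$. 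Identity (iv) is the symmetric computation: the left side has $n$-th term $\mathsf{T}^n(f_\bullet \cdot k)_0\, g_n = \mathsf{T}^n(f_\bullet)_0\,\mathsf{P}^n(k)\, g_n$, which equals the $n$-th term $\mathsf{T}^n(f_\bullet)_0\, (k \cdot g_\bullet)_n$ of the right side.

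Finally, (v) and (vi) follow formally without reopening the termwise computations: combining (ii) with the left unit law $i_\bullet \ast f_\bullet = f_\bullet$ gives $(h \cdot i_\bullet) \ast f_\bullet = h \cdot (i_\bullet \ast f_\bullet) = h \cdot f_\bullet$, and combining (iii) with the right unit law $f_\bullet \ast i_\bullet = f_\bullet$ gives $f_\bullet \ast (i_\bullet \cdot k) = (f_\bullet \ast i_\bullet) \cdot k = f_\bullet \cdot k$. The only step demanding any care is the preliminary induction describing how $\mathsf{T}^n$ commutes with the two kinds of scalar multiplication; everything else is direct substitution, so there is no genuine obstacle here.
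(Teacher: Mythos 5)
Your proof is correct, and there is nothing in the paper to diverge from: the paper states this lemma without proof (``which we leave as an exercise to the reader''), and your termwise verification via the iterated identities $\mathsf{T}^n(h \cdot f_\bullet) = \mathsf{P}^n(h) \cdot \mathsf{T}^n(f_\bullet)$ and $\mathsf{T}^n(f_\bullet \cdot k) = \mathsf{T}^n(f_\bullet) \cdot \mathsf{P}^n(k)$, together with the naturality of $i_n : \mathsf{P}^n \Rightarrow 1_{\mathbb{X}}$ already noted in the proof of Lemma \ref{Tfunctor}, is exactly the argument the surrounding proofs suggest. Your derivation of (v) and (vi) from (ii), (iii) and the unit laws is also non-circular, since $i_\bullet \ast f_\bullet = f_\bullet = f_\bullet \ast i_\bullet$ is established directly in the proposition that $\overline{\mathcal{D}}[\mathbb{X}]$ is a category, prior to and independently of this lemma.
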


We now give a finite product structure on $\overline{\mathcal{D}}[\mathbb{X}]$:

\begin{proposition}\label{preDprod} $\overline{\mathcal{D}}[\mathbb{X}]$ is a category with finite products where:
\begin{enumerate}
\item The product of objects is the product of objects in $\mathbb{X}$; 
\item The projections are the pre-$\mathsf{D}$-sequences $i_\bullet \cdot \pi_0: A \times B \to A$ and $i_\bullet \cdot \pi_1: A \times B \to B$; 
\item The pairing of pre-$\mathsf{D}$-sequences $f_\bullet: C \to A$ and $g_\bullet: C \to B$ is $\langle f_\bullet, g_\bullet \rangle: C \to A \times B$ where $\langle f_\bullet, g_\bullet \rangle_n := \langle f_n, g_n \rangle$;
\item The terminal object is $\mathsf{1}$, the terminal object of $\mathbb{X}$;
\item The unique map to the terminal object is the pre-$\mathsf{D}$-sequence $i_\bullet \cdot \mathsf{t}: A \to \mathsf{1}$. 
\end{enumerate}
\end{proposition}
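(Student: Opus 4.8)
The plan is to verify the two universal properties---of the binary product and of the terminal object---working throughout by the ``external'' method, that is, reducing everything to the scalar-multiplication identities of Lemma \ref{astprop1} and only descending to termwise computations in $\mathbb{X}$ at the very end. First I would note that the pairing is well-typed: since $f_n: \mathsf{P}^n(C) \to A$ and $g_n: \mathsf{P}^n(C) \to B$, the map $\langle f_n, g_n \rangle: \mathsf{P}^n(C) \to A \times B$ has the correct type, so $\langle f_\bullet, g_\bullet \rangle$ as defined is a genuine pre-$\mathsf{D}$-sequence.

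The projection equations are the heart of the argument. To show $\langle f_\bullet, g_\bullet \rangle \ast (i_\bullet \cdot \pi_0) = f_\bullet$, I would apply Lemma \ref{astprop1} (vi), which turns a composite with $i_\bullet \cdot \pi_0$ into a scalar multiplication: $\langle f_\bullet, g_\bullet \rangle \ast (i_\bullet \cdot \pi_0) = \langle f_\bullet, g_\bullet \rangle \cdot \pi_0$. A termwise check then suffices, since $(\langle f_\bullet, g_\bullet \rangle \cdot \pi_0)_n = \langle f_n, g_n \rangle \pi_0 = f_n$ by the product structure of $\mathbb{X}$. The identical computation with $\pi_1$ gives $\langle f_\bullet, g_\bullet \rangle \ast (i_\bullet \cdot \pi_1) = g_\bullet$.

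For uniqueness of the pairing, suppose $h_\bullet: C \to A \times B$ satisfies $h_\bullet \ast (i_\bullet \cdot \pi_0) = f_\bullet$ and $h_\bullet \ast (i_\bullet \cdot \pi_1) = g_\bullet$. Applying Lemma \ref{astprop1} (vi) again reduces these to $h_\bullet \cdot \pi_0 = f_\bullet$ and $h_\bullet \cdot \pi_1 = g_\bullet$, i.e. $(h_\bullet)_n \pi_0 = f_n$ and $(h_\bullet)_n \pi_1 = g_n$ for every $n$. Since $(h_\bullet)_n$ is a map into the product $A \times B$ of $\mathbb{X}$, it equals the pairing of its two projections, so $(h_\bullet)_n = \langle (h_\bullet)_n \pi_0, (h_\bullet)_n \pi_1 \rangle = \langle f_n, g_n \rangle = \langle f_\bullet, g_\bullet \rangle_n$, giving $h_\bullet = \langle f_\bullet, g_\bullet \rangle$. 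For the terminal object, any pre-$\mathsf{D}$-sequence $h_\bullet: A \to \mathsf{1}$ has each component $h_n: \mathsf{P}^n(A) \to \mathsf{1}$ equal to the unique map into $\mathsf{1}$ in $\mathbb{X}$; in particular $h_n = i_n \mathsf{t} = (i_\bullet \cdot \mathsf{t})_n$, so $h_\bullet = i_\bullet \cdot \mathsf{t}$, establishing that $i_\bullet \cdot \mathsf{t}$ is the unique map to $\mathsf{1}$.

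In truth there is no serious obstacle here: the one conceptual point is recognizing that composition with the projection pre-$\mathsf{D}$-sequences $i_\bullet \cdot \pi_j$ collapses to ordinary scalar multiplication via Lemma \ref{astprop1} (vi), after which every remaining step is an immediate termwise appeal to the product and terminal structure of the base category $\mathbb{X}$. The pitfall to guard against is attempting to expand the composition formula (\ref{predcomp}) with $\mathsf{T}^n(\langle f_\bullet, g_\bullet \rangle)_0$ directly; the external identities of Lemma \ref{astprop1} are precisely what let us bypass that combinatorial detour.
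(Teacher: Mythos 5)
Your proof is correct and follows essentially the same route as the paper: the projection equations are reduced via Lemma \ref{astprop1} (vi) to scalar multiplication $\langle f_\bullet, g_\bullet \rangle \cdot \pi_j$ and checked termwise, while uniqueness (which the paper dispatches in one line as following from the finite product structure of $\mathbb{X}$) is exactly the termwise argument you spell out. No gaps; your version simply makes the uniqueness steps explicit.
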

\begin{proof} Uniqueness of the maps to the terminal object and the pairing of maps follow directly from the finite product structure of $\mathbb{X}$. Therefore, it remains only to show that $\langle f_\bullet, g_\bullet \rangle \ast (i_\bullet \cdot \pi_0) = f_\bullet$ and $\langle f_\bullet, g_\bullet \rangle \ast (i_\bullet \cdot \pi_1) = g_\bullet$. However both follow immediately from Lemma \ref{astprop1} (vi): 
\begin{align*}
\left(\langle f_\bullet, g_\bullet \rangle \ast (i_\bullet \cdot \pi_0)\right)_n &=~ \left(\langle f_\bullet, g_\bullet \rangle \cdot \pi_0 \right)_n = \langle f_\bullet, g_\bullet \rangle_n \pi_0 = \langle f_n, g_n \rangle \pi_0 = f_n
\end{align*}
and similarly for $\langle f_\bullet, g_\bullet \rangle \ast (i_\bullet \cdot \pi_1) = g_\bullet$. 
\end{proof} 

\begin{lemma}\label{astprop2} The following equalities hold: 
\begin{enumerate}[{\em (i)}]
\item $h \cdot \langle f_\bullet, g_\bullet \rangle = \langle h \cdot f_\bullet, h \cdot g_\bullet \rangle$;
\item $f_\bullet \cdot \langle h,k \rangle = \langle f_\bullet \cdot h, f_\bullet \cdot k \rangle$; 
\item $\langle f_\bullet \cdot k_1, g_\bullet \cdot k_2 \rangle = \langle f_\bullet , g_\bullet \rangle \cdot (k_1 \times k_2)$;
\item $\langle f_\bullet, g_\bullet \rangle \cdot \pi_0 = f_\bullet$ and $\langle f_\bullet, g_\bullet \rangle \cdot \pi_1 = g_\bullet$;
\item $f_\bullet \times g_\bullet = \langle \pi_0 \cdot f_\bullet, \pi_1 \cdot g_\bullet \rangle$;
\item $(f_\bullet \times g_\bullet) \cdot \pi_0 = \pi_0 \cdot f_\bullet$ and $(f_\bullet \times g_\bullet) \cdot \pi_1 = \pi_1 \cdot g_\bullet$; 
\item $(f_\bullet \times g_\bullet) \cdot (h \times k) = (f_\bullet \cdot h) \times (g_\bullet \cdot k)$;
\item $\langle \langle f_\bullet,  f^\prime_\bullet \rangle, \langle g_\bullet, g^\prime_\bullet \rangle \rangle \cdot c = \langle \langle f_\bullet, g_\bullet \rangle, \langle f^\prime_\bullet, g^\prime_\bullet \rangle \rangle$. 
\end{enumerate}
\end{lemma}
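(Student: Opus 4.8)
The plan is to split the eight identities into those proved \emph{internally} (by evaluating the $n$-th term and invoking a standard finite-product identity in $\mathbb{X}$) and those proved \emph{externally} (from the already-established structure of $\overline{\mathcal{D}}[\mathbb{X}]$, without descending to components). Since scalar multiplication and pairing are defined termwise, namely $(h \cdot f_\bullet)_n = \mathsf{P}^n(h) f_n$, $(f_\bullet \cdot k)_n = f_n k$, and $\langle f_\bullet, g_\bullet \rangle_n = \langle f_n, g_n \rangle$, parts (i)--(iv) are internal. Evaluating the $n$-th term turns (i) into $\mathsf{P}^n(h)\langle f_n, g_n \rangle = \langle \mathsf{P}^n(h) f_n, \mathsf{P}^n(h) g_n \rangle$, turns (ii) into $f_n \langle h, k \rangle = \langle f_n h, f_n k \rangle$, turns (iii) into $\langle f_n, g_n \rangle (k_1 \times k_2) = \langle f_n k_1, g_n k_2 \rangle$, and turns (iv) into $\langle f_n, g_n \rangle \pi_j = f_n$ (resp. $g_n$). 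Each of these is immediate from the universal property of the product in $\mathbb{X}$, and since it holds for every $n$, the corresponding equality of pre-$\mathsf{D}$-sequences follows.

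Part (viii) is also internal, and it is the one place where I would take extra care, since it is the only identity that touches the concrete definition of the flip. Evaluating the $n$-th term reduces it to the identity $\langle \langle f_n, f^\prime_n \rangle, \langle g_n, g^\prime_n \rangle \rangle c = \langle \langle f_n, g_n \rangle, \langle f^\prime_n, g^\prime_n \rangle \rangle$ in $\mathbb{X}$. Because $c = 1 \times \langle \pi_1, \pi_0 \rangle \times 1$ is precisely the canonical flip swapping the two middle factors of $A \times A \times A \times A$, postcomposing the tuple $(f_n, f^\prime_n, g_n, g^\prime_n)$ with $c$ yields $(f_n, g_n, f^\prime_n, g^\prime_n)$, which is exactly the right-hand side. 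This is a routine rearrangement of projections, but it is worth unwinding $c$ explicitly so the index bookkeeping is transparent.

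Parts (v)--(vii) are external. For (v), I would unfold the product morphism of $\overline{\mathcal{D}}[\mathbb{X}]$ as $f_\bullet \times g_\bullet = \langle (i_\bullet \cdot \pi_0) \ast f_\bullet, (i_\bullet \cdot \pi_1) \ast g_\bullet \rangle$ (the projections being $i_\bullet \cdot \pi_j$), rewrite $i_\bullet \cdot \pi_j = \pi_j \cdot i_\bullet$ by Lemma \ref{astprop1} (i), and apply Lemma \ref{astprop1} (v) to obtain $(\pi_j \cdot i_\bullet) \ast f_\bullet = \pi_j \cdot f_\bullet$, giving the claimed form. Part (vi) then follows by substituting (v) and applying (iv). For (vii), I would substitute (v), apply (iii) in reverse to factor out $h \times k$ as $\langle (\pi_0 \cdot f_\bullet) \cdot h, (\pi_1 \cdot g_\bullet) \cdot k \rangle$, use Lemma \ref{scalarlem} (iv) to rewrite $(\pi_j \cdot f_\bullet) \cdot h = \pi_j \cdot (f_\bullet \cdot h)$, and close with (v) once more.

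The main obstacle is essentially only organizational: there is no serious computation beyond (viii)'s unwinding of the flip. The one thing to keep straight is the dependency order among the external parts, so as to avoid circularity: (v) should be proved from Lemma \ref{astprop1}, and then (vi) and (vii) proved from (iii), (iv), (v), and Lemma \ref{scalarlem} (iv). With that ordering fixed, the whole lemma reduces to termwise finite-product identities in $\mathbb{X}$ together with a handful of citations to the scalar-multiplication and composition lemmas.
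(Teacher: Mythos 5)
Your proposal is correct: the paper states this lemma without any proof (like Lemma \ref{astprop1}, it is left as a routine verification), and your termwise checks for (i)--(iv) and (viii) together with the external derivations of (v)--(vii) from Lemma \ref{astprop1} (i), (v) and Lemma \ref{scalarlem} (iv) are exactly the intended argument. In particular, your explicit unwinding of $c = 1 \times \langle \pi_1, \pi_0 \rangle \times 1$ in (viii) and your dependency ordering (proving (v) first, then (vi) and (vii) from (iii), (iv), (v)) are sound and avoid circularity.
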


Notice that Lemma \ref{astprop2} involves the canonical flip $c$ from the differential combinator axiom \textbf{[CD.7]}. This identity will come into play in Section \ref{Dsec}. 

We can now observe the following relations between $\mathsf{D}$ and $\mathsf{T}$:

\begin{proposition}\label{Diff1} The following equalities hold: 
\begin{enumerate}[{\em (i)}]
\item $\mathsf{T}(f_\bullet) = \langle \pi_0 \cdot f_\bullet, \mathsf{D}[f_\bullet] \rangle$;
\item $\mathsf{D}[i_\bullet] = i_\bullet \cdot \pi_1$;
\item $\mathsf{D}[i_\bullet \cdot \pi_j] = i_\bullet \cdot (\pi_1 \pi_j)$; 
\item $\mathsf{D}[f_\bullet \ast g_\bullet] = \mathsf{T}(f_\bullet) \ast \mathsf{D}[g_\bullet]$;
\item $\mathsf{D}[\langle f_\bullet, g_\bullet \rangle] = \langle \mathsf{D}[f_\bullet], \mathsf{D}[g_\bullet] \rangle$.
\end{enumerate}
\end{proposition}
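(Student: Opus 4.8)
The plan is to prove the five identities largely by the \emph{external} method, leaning on the relations already established in Lemma \ref{Tprop1}, Lemma \ref{Tfunctor}, and Lemma \ref{scalarlem}, reserving the termwise \emph{internal} method only for the two identities, (i) and (v), that merely repackage definitions.

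For (i), I would evaluate at $n$ using that pairing is computed termwise (Proposition \ref{preDprod}(iii)): since $(\pi_0 \cdot f_\bullet)_n = \mathsf{P}^n(\pi_0) f_n$ and $\mathsf{D}[f_\bullet]_n = f_{n+1}$, the pair $\langle \pi_0 \cdot f_\bullet, \mathsf{D}[f_\bullet]\rangle_n = \langle \mathsf{P}^n(\pi_0) f_n, f_{n+1}\rangle$ is exactly $\mathsf{T}(f_\bullet)_n$ from Definition \ref{tandiffseq}(i). Identity (v) is equally immediate: $\mathsf{D}[\langle f_\bullet, g_\bullet\rangle]_n = \langle f_\bullet, g_\bullet\rangle_{n+1} = \langle f_{n+1}, g_{n+1}\rangle = \langle \mathsf{D}[f_\bullet]_n, \mathsf{D}[g_\bullet]_n\rangle$. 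Both are one-line unfoldings and need no new idea.

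The remaining three I would deduce formally, which avoids all index bookkeeping. For (ii), combine Lemma \ref{Tprop1}(iv) with the functoriality fact $\mathsf{T}(i_\bullet) = i_\bullet$ (Lemma \ref{Tfunctor}(i)): $\mathsf{D}[i_\bullet] = \mathsf{T}(i_\bullet)\cdot \pi_1 = i_\bullet \cdot \pi_1$. Identity (iii) then follows by pushing the differential through the right scalar action: $\mathsf{D}[i_\bullet \cdot \pi_j] = \mathsf{D}[i_\bullet]\cdot \pi_j$ (Lemma \ref{Tprop1}(vi)) $= (i_\bullet \cdot \pi_1)\cdot \pi_j$ (by (ii)) $= i_\bullet \cdot (\pi_1 \pi_j)$ (Lemma \ref{scalarlem}(iii)). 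Identity (iv), the differential chain rule, is the crux and I would obtain it as $\mathsf{D}[f_\bullet \ast g_\bullet] = \mathsf{T}(f_\bullet \ast g_\bullet)\cdot \pi_1 = (\mathsf{T}(f_\bullet)\ast \mathsf{T}(g_\bullet))\cdot \pi_1 = \mathsf{T}(f_\bullet)\ast(\mathsf{T}(g_\bullet)\cdot \pi_1) = \mathsf{T}(f_\bullet)\ast \mathsf{D}[g_\bullet]$, invoking in turn Lemma \ref{Tprop1}(iv), functoriality of $\mathsf{T}$ (Lemma \ref{Tfunctor}(ii)), compatibility of $\ast$ with the right scalar action (Lemma \ref{astprop1}(iii)), and again Lemma \ref{Tprop1}(iv).

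The main thing to watch, rather than a genuine obstacle, is the conceptual point that (iv) is nothing but the higher-order chain rule (\ref{highchainrule}) in disguise: once $\mathsf{T}$ is known to be a functor on $\overline{\mathcal{D}}[\mathbb{X}]$ and $\mathsf{D}$ is recognized as $\mathsf{T}(-)\cdot \pi_1$, the chain rule for $\mathsf{D}$ is forced. The only bookkeeping subtlety is that the identity pre-$\mathsf{D}$-sequence appearing in (ii) and (iii) lives over different objects ($A$ versus $\mathsf{P}(A)$, or $A \times B$ versus $\mathsf{P}(A \times B)$); since $\mathsf{P}^n(\mathsf{P}(A)) = \mathsf{P}^{n+1}(A)$ these match up automatically, so the external derivations go through without ambiguity. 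If one preferred, (ii)--(iv) could equally be verified internally, e.g. (iv) by $(\mathsf{T}(f_\bullet)\ast \mathsf{D}[g_\bullet])_n = \mathsf{T}^{n+1}(f_\bullet)_0\, g_{n+1} = (f_\bullet \ast g_\bullet)_{n+1} = \mathsf{D}[f_\bullet \ast g_\bullet]_n$, but the external route is cleaner.
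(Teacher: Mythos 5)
Your proposal is correct and follows essentially the same route as the paper: identities (ii)--(iv) are derived externally via exactly the same chains of lemmas (Lemma \ref{Tfunctor} for functoriality of $\mathsf{T}$, Lemma \ref{Tprop1}(iv) recognizing $\mathsf{D} = \mathsf{T}(-)\cdot \pi_1$, and Lemma \ref{astprop1}(iii) / Lemma \ref{scalarlem}(iii) to move the scalar action), with (iv) in particular matching the paper's proof step for step. Your termwise unfoldings of (i) and (v) are a harmless cosmetic variation -- the paper proves (i) by an equally short external computation and leaves (v) implicit -- so there is nothing substantive to distinguish the two arguments.
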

\begin{proof} 
\begin{enumerate}[{\em (i)}]
\item By Lemma \ref{scalarlem} (iv), Lemma \ref{astprop2} (iii) and Lemma \ref{Tprop1} (iii) and (iv) we have that:
\[\mathsf{T}(f_\bullet) =  \mathsf{T}(f_\bullet) \cdot 1 =  \mathsf{T}(f_\bullet) \cdot \langle \pi_0, \pi_1 \rangle =  \langle \mathsf{T}(f_\bullet) \cdot  \pi_0,  \mathsf{T}(f_\bullet) \cdot \pi_1 \rangle = \langle \pi_0 \cdot f_\bullet, \mathsf{D}[f_\bullet] \rangle  \]
\item By the functoriality of $\mathsf{T}$ and Lemma \ref{Tprop1} (iv) we have that: 
\[\mathsf{D}[i_\bullet] = \mathsf{T}(\i_\bullet) \cdot \pi_1 = i_\bullet \cdot \pi_1\]
\item By (iii), Lemma \ref{Tprop1} (vi), and Lemma \ref{scalarlem} (iii) we have that: 
\[\mathsf{D}[i_\bullet \cdot \pi_j] = \mathsf{D}[i_\bullet] \cdot \pi_j = (i_\bullet \cdot \pi_1) \cdot \pi_j =  i_\bullet \cdot (\pi_1 \pi_j) \]
\item By functoriality of $\mathsf{T}$, Lemma \ref{Tprop1} (iv), and Lemma \ref{astprop1} (iii) we have that:
\[\mathsf{D}[f_\bullet \ast g_\bullet] = \mathsf{T}(f_\bullet \ast g_\bullet) \cdot \pi_1 = (\mathsf{T}(f_\bullet) \ast \mathsf{T}(g_\bullet)) \cdot \pi_1 =\mathsf{T}(f_\bullet) \ast (\mathsf{T}(g_\bullet) \cdot \pi_1) =  \mathsf{T}(f_\bullet) \ast \mathsf{D}[g_\bullet]\]
\end{enumerate}
\end{proof}  

Note that Proposition \ref{Diff1} shows that $\mathsf{D}$ already satisfies some of the differential combinator axioms (Definition \ref{cartdiffdef}). Indeed, (ii) and (iii) are \textbf{[CD.3]}, (v) is \textbf{[CD.4]}, and (iv) is the chain rule \textbf{[CD.5]}. Also, (i) says that $\mathsf{T}$ is indeed the tangent functor (Proposition \ref{tangentfunctorprop}) obtained from $\mathsf{D}$. 

\subsection{Comonad of Pre-$\mathsf{D}$-Sequences}\label{predcomsec}

We now show that pre-$\mathsf{D}$-sequences already give us a comonad. Let $\mathsf{CART}$ (for Cartesian) be the category of all categories with finite products and functors between them which preserves the product structure strictly -- which we shall call here a \textbf{strict Cartesian functor}. Explicitly, for the sake of clarity, for a functor $\mathsf{F}$ to be a strict Cartesian functor we must have for objects $\mathsf{F}(A \times B) = \mathsf{F}(A) \times \mathsf{F}(B)$, $\mathsf{F}$ preserves the terminal object, and that for the projections $\mathsf{F}(\pi_j)=\pi_j$. It then follows that $\mathsf{F}(\langle f, g \rangle) = \langle \mathsf{F}(f), \mathsf{F}(g) \rangle$ and that $\mathsf{F}(f \times g) = \mathsf{F}(f) \times \mathsf{F}(g)$. Therefore, $\mathsf{F}\mathsf{P}= \mathsf{P}\mathsf{F}$, for the product functor $\mathsf{P}$ as defined at the beginning of Section \ref{PREDsubsec}. 

Let $\mathsf{F}: \mathbb{X} \to \mathbb{Y}$ be a strict Cartesian functor. Define the functor $\overline{\mathcal{D}}[\mathsf{F}]: \overline{\mathcal{D}}[\mathbb{X}] \to \overline{\mathcal{D}}[\mathbb{Y}]$ on objects as $\overline{\mathcal{D}}[\mathsf{F}](A)=\mathsf{F}(A)$ and for a pre-$\mathsf{D}$-sequence $f_\bullet$ of $\mathbb{X}$, define the pre-$\mathsf{D}$-sequence $\overline{\mathcal{D}}[\mathsf{F}](f_\bullet)$ of $\mathbb{Y}$ by $\overline{\mathcal{D}}[\mathsf{F}](f_\bullet)_n = \mathsf{F}(f_n)$. 

\begin{lemma}\label{DFunctor} $\overline{\mathcal{D}}[\mathsf{F}]: \overline{\mathcal{D}}[\mathbb{X}] \to \overline{\mathcal{D}}[\mathbb{Y}]$ is a strict Cartesian functor. 
\end{lemma}
\begin{proof} That $\overline{\mathcal{D}}[\mathsf{F}]$ preserves the identity follows from that fact that $\mathsf{F}$ preserves projections: 
\[\overline{\mathcal{D}}[\mathsf{F}](i_\bullet)_n = \mathsf{F}(i_n) = \mathsf{F}(\underbrace{\pi_1 \hdots \pi_1}_{n-\text{times}}) = \underbrace{\mathsf{F}(\pi_1) \hdots \mathsf{F}(\pi_1)}_{n-\text{times}} = \underbrace{\pi_1 \hdots \pi_1}_{n-\text{times}} = i_n \]
To show that $\overline{\mathcal{D}}[\mathsf{F}]$ also preserves composition, notice the following compatibility between $\mathsf{F}$ and $\mathsf{T}$: 
\begin{align*}
\mathsf{F}(\mathsf{T}(f_\bullet)_n) &=~ \mathsf{F}(\langle \mathsf{P}^n(\pi_0) f_n, f_{n+1} \rangle)\\
&=~ \langle \mathsf{F}(\mathsf{P}^n(\pi_0) f_n), \mathsf{F}(f_{n+1}) \rangle \tag{$\mathsf{F}$ preserves product structure strictly} \\
&=~ \langle \mathsf{F}(\mathsf{P}^n(\pi_0)) \mathsf{F}(f_n), \mathsf{F}(f_{n+1}) \rangle \\
&=~ \langle \mathsf{P}^n(\mathsf{F}(\pi_0)) \mathsf{F}(f_n), \mathsf{F}(f_{n+1}) \rangle \tag{$\mathsf{F}$ commutes with $\mathsf{P}$} \\
&=~Ê\langle \mathsf{P}^n(\pi_0) \mathsf{F}(f_n), \mathsf{F}(f_{n+1}) \rangle \tag{$\mathsf{F}$ preserves product structure strictly} \\
&=~ \langle \mathsf{P}^n(\pi_0)\overline{\mathcal{D}}[\mathsf{F}](f_\bullet)_n, \overline{\mathcal{D}}[\mathsf{F}](f_\bullet)_{n+1} \rangle \\
&=~ \mathsf{T}\left(\overline{\mathcal{D}}[\mathsf{F}](f_\bullet)\right)_n 
\end{align*}
Then by this above equality and the fact that $\mathsf{F}$ is functor itself, we have that: 
\begin{align*}
\overline{\mathcal{D}}[\mathsf{F}](f_\bullet \ast g_\bullet)_n &=~ \mathsf{F}\left( (f_\bullet \ast g_\bullet)_n \right) \\
&=~ \mathsf{F}\left( \mathsf{T}^n(f_\bullet)_0 g_n\right) \\
&=~  \mathsf{F}\left( \mathsf{T}^n(f_\bullet)_0 \right)  \mathsf{F}\left( g_n\right) \\
&=~ \mathsf{T}^n\left(\overline{\mathcal{D}}[\mathsf{F}](f_\bullet)\right)_0 \overline{\mathcal{D}}[\mathsf{F}](g_\bullet)_n \\
&=~ \left(\overline{\mathcal{D}}[\mathsf{F}](f_\bullet) \ast \overline{\mathcal{D}}[\mathsf{F}](g_\bullet) \right)_n
\end{align*}
Lastly, $\overline{\mathcal{D}}[\mathsf{F}]$ preserves projections since $\mathsf{F}$ preserves projections: 
\[\overline{\mathcal{D}}[\mathsf{F}](i_\bullet \cdot \pi_j)_n= \mathsf{F}((i_\bullet \cdot \pi_j)_n) = \mathsf{F}(i_n \pi_j) = \mathsf{F}(i_n)\mathsf{F}(\pi_j) = i_n \pi_j = (i_\bullet \cdot \pi_j)_n\]
\end{proof} 

\begin{lemma}\label{Dbarfunctor} $\overline{\mathcal{D}}: \mathsf{CART} \to \mathsf{CART}$ is a functor. 
\end{lemma}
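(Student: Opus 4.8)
The plan is to verify the two functor axioms for $\overline{\mathcal{D}}$, namely preservation of identities and preservation of composition. The groundwork is already in place: Proposition \ref{preDprod} shows that each $\overline{\mathcal{D}}[\mathbb{X}]$ is an object of $\mathsf{CART}$ (a category with finite products), and Lemma \ref{DFunctor} shows that each $\overline{\mathcal{D}}[\mathsf{F}]$ is a strict Cartesian functor. Hence the object- and arrow-assignments of $\overline{\mathcal{D}}$ land in the correct places, and all that remains is to check the two functoriality equations. The key structural fact I would exploit throughout is that $\overline{\mathcal{D}}[\mathsf{F}]$ acts on a pre-$\mathsf{D}$-sequence term-by-term, via $\overline{\mathcal{D}}[\mathsf{F}](f_\bullet)_n = \mathsf{F}(f_n)$, so both equations reduce to component-wise statements that follow from functoriality of the underlying $\mathsf{F}$.

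First I would treat identity preservation, checking that $\overline{\mathcal{D}}[1_{\mathbb{X}}]$ equals the identity functor on $\overline{\mathcal{D}}[\mathbb{X}]$. On objects it fixes each $A$ since $1_{\mathbb{X}}(A) = A$, and on a pre-$\mathsf{D}$-sequence $f_\bullet$ it gives, for every $n \in \mathbb{N}$, $\overline{\mathcal{D}}[1_{\mathbb{X}}](f_\bullet)_n = 1_{\mathbb{X}}(f_n) = f_n$, so that $\overline{\mathcal{D}}[1_{\mathbb{X}}](f_\bullet) = f_\bullet$ as required.

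Next I would treat composition preservation. Given strict Cartesian functors $\mathsf{F}: \mathbb{X} \to \mathbb{Y}$ and $\mathsf{G}: \mathbb{Y} \to \mathbb{Z}$, the goal is $\overline{\mathcal{D}}[\mathsf{F}\mathsf{G}] = \overline{\mathcal{D}}[\mathsf{F}]\overline{\mathcal{D}}[\mathsf{G}]$. On objects this is immediate from functoriality of the underlying composite on the common object set. On a pre-$\mathsf{D}$-sequence $f_\bullet$, I would compute both sides component-wise and compare, using the diagrammatic composition convention (so that $\mathsf{F}\mathsf{G}$ means first $\mathsf{F}$ then $\mathsf{G}$):
\begin{align*}
\overline{\mathcal{D}}[\mathsf{F}\mathsf{G}](f_\bullet)_n = (\mathsf{F}\mathsf{G})(f_n) = \mathsf{G}(\mathsf{F}(f_n)) = \mathsf{G}\!\left(\overline{\mathcal{D}}[\mathsf{F}](f_\bullet)_n\right) = \left(\overline{\mathcal{D}}[\mathsf{F}]\overline{\mathcal{D}}[\mathsf{G}]\right)(f_\bullet)_n.
\end{align*}
Since this holds for all $n$, the two functors agree on morphisms as well.

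I do not expect any substantive obstacle here: the genuinely delicate compatibilities — that $\overline{\mathcal{D}}[\mathsf{F}]$ respects the composition $\ast$, the tangent operation $\mathsf{T}$, and the product structure — were already discharged in Lemma \ref{DFunctor}. The only point requiring any care is bookkeeping of the diagrammatic composition order; once that is fixed, both functor axioms follow purely formally from the term-wise definition of $\overline{\mathcal{D}}[\mathsf{F}]$ and the functoriality of $\mathsf{F}$ and $\mathsf{G}$.
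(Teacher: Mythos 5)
Your proposal is correct and follows the same route as the paper: the paper's proof likewise cites Proposition \ref{preDprod} for well-definedness on objects and Lemma \ref{DFunctor} on maps, then declares preservation of identities and composition ``straightforward by definition'' --- exactly the term-wise computation $\overline{\mathcal{D}}[\mathsf{F}](f_\bullet)_n = \mathsf{F}(f_n)$ that you spell out. Your version simply makes the straightforward part explicit, which is fine.
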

\begin{proof} That $\overline{\mathcal{D}}$ is well defined on objects is given by Proposition \ref{preDprod}, while being well defined on maps is given by Lemma \ref{DFunctor}.  It is straightforward to see that by definition $\overline{\mathcal{D}}$ preserves identity functors and composition of functors.
\end{proof} 

We now define a comonad structure on $\overline{\mathcal{D}}$. Starting with the counit, define the functor $\overline{\varepsilon}: \overline{\mathcal{D}}[\mathbb{X}] \to \mathbb{X}$ defined on objects as $\overline{\varepsilon}(A) := A$ and on pre-$\mathsf{D}$-sequences as $\overline{\varepsilon}(f_\bullet) := f_0$.

\begin{lemma}\label{epsilonlemma1} $\overline{\varepsilon}: \overline{\mathcal{D}}[\mathbb{X}] \to \mathbb{X}$ is a strict Cartesian functor. 
\end{lemma}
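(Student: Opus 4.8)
The plan is to verify directly, by unwinding the definition of $\overline{\varepsilon}$, that it preserves both the categorical structure and the finite product structure strictly. Since $\overline{\varepsilon}$ sends a pre-$\mathsf{D}$-sequence to its zeroth term, every required identity reduces to evaluating the relevant pre-$\mathsf{D}$-sequence at $n=0$, and the single trick I would exploit is that $\mathsf{T}^0$ is the identity functor on $\overline{\mathcal{D}}[\mathbb{X}]$, so that the zeroth component of any composite collapses.

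First I would check functoriality. For the identity, the definition $i_0 := 1_A$ gives $\overline{\varepsilon}(i_\bullet) = i_0 = 1_A = 1_{\overline{\varepsilon}(A)}$ immediately. For composition, the defining formula (\ref{predcomp}) evaluated at $n=0$ reads $(f_\bullet \ast g_\bullet)_0 = \mathsf{T}^0(f_\bullet)_0\, g_0 = f_0 g_0$, so that $\overline{\varepsilon}(f_\bullet \ast g_\bullet) = f_0 g_0 = \overline{\varepsilon}(f_\bullet)\,\overline{\varepsilon}(g_\bullet)$.

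Next I would verify strictness of the product preservation using the description of the finite product structure on $\overline{\mathcal{D}}[\mathbb{X}]$ from Proposition \ref{preDprod}. On objects, the product and the terminal object of $\overline{\mathcal{D}}[\mathbb{X}]$ are literally those of $\mathbb{X}$, so $\overline{\varepsilon}(A \times B) = A \times B = \overline{\varepsilon}(A) \times \overline{\varepsilon}(B)$ and $\overline{\varepsilon}(\mathsf{1}) = \mathsf{1}$ hold on the nose. For the projections, which in $\overline{\mathcal{D}}[\mathbb{X}]$ are the pre-$\mathsf{D}$-sequences $i_\bullet \cdot \pi_j$, I would compute $\overline{\varepsilon}(i_\bullet \cdot \pi_j) = (i_\bullet \cdot \pi_j)_0 = i_0\,\pi_j = 1_A\,\pi_j = \pi_j$, once more invoking $i_0 = 1_A$.

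There is no real obstacle here: every clause is a one-line evaluation at $n=0$, and the only two facts doing any work are that $i_0 = 1_A$ and that $\mathsf{T}^0$ is the identity, so neither the functoriality of $\mathsf{T}$ (Lemma \ref{Tfunctor}) nor any higher component of the sequences is needed. This is precisely why the statement is packaged as a short lemma: the counit simply forgets all the higher-order derivative data and retains only the zeroth-order term.
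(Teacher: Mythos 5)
Your proof is correct and follows essentially the same route as the paper: evaluate everything at $n=0$, using $i_0 = 1_A$ and $(f_\bullet \ast g_\bullet)_0 = \mathsf{T}^0(f_\bullet)_0\, g_0 = f_0 g_0$ for functoriality, and $\overline{\varepsilon}(i_\bullet \cdot \pi_j) = i_0 \pi_j = \pi_j$ for the projections. Your additional explicit remarks that $\overline{\varepsilon}$ is the identity on objects (so products and the terminal object are preserved on the nose) and that $\mathsf{T}^0$ is the identity only spell out what the paper leaves implicit.
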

\begin{proof} That $\varepsilon$ is a functor follows mostly by definition of $\overline{\mathcal{D}}[\mathbb{X}]$: 
\[\varepsilon(i_\bullet) = i_0 = 1 \quad \quad \quad \varepsilon(f_\bullet \ast g_\bullet)= (f_\bullet \ast g_\bullet)_0 = f_0 g_0 = \varepsilon(f_\bullet)\varepsilon(g_\bullet)\]
While for the projection maps we have (recall that $i_0=1$):
\[\varepsilon(i_\bullet \cdot \pi_j) = (i_\bullet \cdot \pi_j)_0 = i_0 \pi_j = \pi_j\]
\end{proof} 

\begin{lemma}\label{natepsilon} $\overline{\varepsilon}: \overline{\mathcal{D}} \Rightarrow 1_{\mathsf{CART}}$ is a natural transformation. 
\end{lemma}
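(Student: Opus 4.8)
The plan is to verify the one naturality condition demanded of $\overline{\varepsilon}$. Since Lemma \ref{epsilonlemma1} already shows that each component $\overline{\varepsilon}_{\mathbb{X}}: \overline{\mathcal{D}}[\mathbb{X}] \to \mathbb{X}$ is a strict Cartesian functor, hence a genuine morphism of $\mathsf{CART}$, all that remains is to show that for every strict Cartesian functor $\mathsf{F}: \mathbb{X} \to \mathbb{Y}$ the naturality square commutes, that is
\[
\overline{\mathcal{D}}[\mathsf{F}]\,\overline{\varepsilon}_{\mathbb{Y}} = \overline{\varepsilon}_{\mathbb{X}}\,\mathsf{F}
\]
as functors $\overline{\mathcal{D}}[\mathbb{X}] \to \mathbb{Y}$ (written in diagrammatic order). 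As these are functors, I would check the equality separately on objects and on pre-$\mathsf{D}$-sequences.

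On an object $A$ of $\overline{\mathcal{D}}[\mathbb{X}]$ both composites return $\mathsf{F}(A)$: following $\overline{\varepsilon}_{\mathbb{X}}$ then $\mathsf{F}$ gives $\mathsf{F}(\overline{\varepsilon}_{\mathbb{X}}(A)) = \mathsf{F}(A)$, while following $\overline{\mathcal{D}}[\mathsf{F}]$ then $\overline{\varepsilon}_{\mathbb{Y}}$ gives $\overline{\varepsilon}_{\mathbb{Y}}(\overline{\mathcal{D}}[\mathsf{F}](A)) = \overline{\varepsilon}_{\mathbb{Y}}(\mathsf{F}(A)) = \mathsf{F}(A)$, using that $\overline{\mathcal{D}}[\mathsf{F}]$ and $\overline{\varepsilon}$ each act as the identity on objects.

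On a pre-$\mathsf{D}$-sequence $f_\bullet$ of $\mathbb{X}$ I would simply unwind the two definitions: the counit extracts the degree-$0$ term, while $\overline{\mathcal{D}}[\mathsf{F}]$ acts componentwise via $\overline{\mathcal{D}}[\mathsf{F}](f_\bullet)_n = \mathsf{F}(f_n)$, so that
\[
\overline{\varepsilon}_{\mathbb{Y}}\!\left(\overline{\mathcal{D}}[\mathsf{F}](f_\bullet)\right) = \overline{\mathcal{D}}[\mathsf{F}](f_\bullet)_0 = \mathsf{F}(f_0) = \mathsf{F}\!\left(\overline{\varepsilon}_{\mathbb{X}}(f_\bullet)\right),
\]
which is exactly the required equality. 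Because the definition of $\overline{\mathcal{D}}[\mathsf{F}]$ is purely componentwise and $\overline{\varepsilon}$ only ever reads off the term in degree $0$, there is no genuine obstacle to overcome here: naturality holds essentially by definition, and the whole verification collapses to the single chain of equalities above. The only point deserving a word of care is bookkeeping — making sure the square is read in the correct (diagrammatic) order so that the component of $\overline{\varepsilon}$ appearing after $\overline{\mathcal{D}}[\mathsf{F}]$ is indeed the one indexed by the target category $\mathbb{Y}$.
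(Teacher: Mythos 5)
Your proof is correct and follows essentially the same route as the paper: invoke Lemma \ref{epsilonlemma1} for well-definedness of the components, note the square commutes trivially on objects, and verify it on a pre-$\mathsf{D}$-sequence via the chain $\overline{\varepsilon}\left(\overline{\mathcal{D}}[\mathsf{F}](f_\bullet)\right) = \mathsf{F}(f_0) = \mathsf{F}\left(\overline{\varepsilon}(f_\bullet)\right)$. The only difference is that you spell out the object case explicitly, which the paper dismisses as clear.
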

\begin{proof} $\overline{\varepsilon}$ is well defined by Lemma \ref{epsilonlemma1}. We must show that for a strict Cartesian functor ${\mathsf{F}: \mathbb{X} \to \mathbb{Y}}$, the following diagram commutes: 
  \[  \xymatrixcolsep{5pc}\xymatrix{\overline{\mathcal{D}}[\mathbb{X}] \ar[d]_-{\overline{\varepsilon}} \ar[r]^-{\overline{\mathcal{D}}[\mathsf{F}]} &  \overline{\mathcal{D}}[\mathbb{Y}] \ar[d]^-{\overline{\varepsilon}} \\
   \mathbb{X} \ar[r]_-{\mathsf{F}} & \mathbb{Y}
  } \]
  On objects this is clear, while for a pre-$\mathsf{D}$-sequence $f_\bullet$, we have that: 
  \[\overline{\varepsilon}\left( \overline{\mathcal{D}}[\mathsf{F}](f_\bullet) \right) = \overline{\mathcal{D}}[\mathsf{F}](f_\bullet)_0 = \mathsf{F}(f_0) = \mathsf{F}\left(\overline{\varepsilon}(f_\bullet) \right)\]
\end{proof} 

The comultiplication of the comonad is defined as the functor $\overline{\delta}: \overline{\mathcal{D}}[\mathbb{X}] \to \overline{\mathcal{D}}\left[\overline{\mathcal{D}}[\mathbb{X}]\right]$ defined on objects as $\overline{\delta}(A) :=A$ and for a pre-$\mathsf{D}$-sequence $f_\bullet$ of $\mathbb{X}$, $\overline{\delta}(f_\bullet)$ is the pre-$\mathsf{D}$-sequence of $\overline{\mathcal{D}}[\mathbb{X}]$ defined as $\overline{\delta}(f_\bullet)_n := \mathsf{D}^n[f_\bullet]$ for $n \geq 1$ and $\overline{\delta}(f_\bullet)_0=f_\bullet$. Note the similarity between $\overline{\delta}(f_\bullet)$ and the intuition given for pre-$\mathsf{D}$-sequences after Definition \ref{preddef}. 

\begin{lemma}\label{deltalemma1} $\overline{\delta}: \overline{\mathcal{D}}[\mathbb{X}] \to \overline{\mathcal{D}}\left[\overline{\mathcal{D}}[\mathbb{X}]\right]$ is a strict Cartesian functor. 
\end{lemma}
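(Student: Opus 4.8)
The plan is to verify the three requirements in turn: that $\overline{\delta}$ is functorial, that it is identity-on-objects and preserves the terminal object (both immediate, since $\overline{\delta}(A)=A$ and $\overline{\delta}$ fixes $\mathsf{1}$), and that it preserves projections. The whole argument rests on the shift description of $\mathsf{D}^n$: since $\mathsf{D}[f_\bullet]_m = f_{m+1}$, iterating gives $\mathsf{D}^n[f_\bullet]_m = f_{n+m}$, so $\overline{\delta}(f_\bullet)_n = \mathsf{D}^n[f_\bullet] = (f_n, f_{n+1}, \hdots)$. In particular $\mathsf{D}[\overline{\delta}(f_\bullet)]_n = \overline{\delta}(f_\bullet)_{n+1} = \mathsf{D}^{n+1}[f_\bullet] = \overline{\delta}(\mathsf{D}[f_\bullet])_n$, so $\overline{\delta}$ intertwines the two differentials, $\mathsf{D}[\overline{\delta}(f_\bullet)] = \overline{\delta}(\mathsf{D}[f_\bullet])$.

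The key lemma I would isolate first is that $\overline{\delta}$ also intertwines the two tangent operations, $\mathsf{T}(\overline{\delta}(f_\bullet)) = \overline{\delta}(\mathsf{T}(f_\bullet))$, where the outer $\mathsf{T}$ is that of $\overline{\mathcal{D}}[\mathbb{X}]$ (Definition \ref{tandiffseq} with base category $\overline{\mathcal{D}}[\mathbb{X}]$) and the inner one is that of $\mathbb{X}$. Using Proposition \ref{Diff1} (i) one writes $\mathsf{T}(\overline{\delta}(f_\bullet)) = \langle \pi_0 \cdot \overline{\delta}(f_\bullet), \mathsf{D}[\overline{\delta}(f_\bullet)] \rangle$; the second component is $\overline{\delta}(\mathsf{D}[f_\bullet])$ by the intertwining just noted, and for the first I would check the scalar-multiplication compatibility $\pi_0 \cdot \overline{\delta}(f_\bullet) = \overline{\delta}(\pi_0 \cdot f_\bullet)$. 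The latter reduces on the inner side to $\mathsf{D}^n[\pi_0 \cdot f_\bullet] = \mathsf{P}^n(\pi_0) \cdot \mathsf{D}^n[f_\bullet]$ (Lemma \ref{Tprop1} (v) iterated), which is matched against the outer scalar multiplication after rewriting the outer projection as $\mathsf{P}^n(i_\bullet \cdot \pi_0) = i_\bullet \cdot \mathsf{P}^n(\pi_0)$ via Lemma \ref{astprop2} (vii) and then applying Lemma \ref{astprop1} (v). Since $\overline{\delta}$ also preserves pairings, because $\overline{\delta}(\langle f_\bullet, g_\bullet \rangle)_n = \mathsf{D}^n[\langle f_\bullet, g_\bullet \rangle] = \langle \mathsf{D}^n[f_\bullet], \mathsf{D}^n[g_\bullet] \rangle$ by Proposition \ref{Diff1} (v) iterated, the intertwining of $\mathsf{T}$ follows, and an induction then gives $\mathsf{T}^n(\overline{\delta}(f_\bullet))_0 = \overline{\delta}(\mathsf{T}^n(f_\bullet))_0 = \mathsf{T}^n(f_\bullet)$.

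With this in hand, functoriality is quick. For composition, the outer rule (\ref{predcomp}) gives $(\overline{\delta}(f_\bullet) \ast \overline{\delta}(g_\bullet))_n = \mathsf{T}^n(\overline{\delta}(f_\bullet))_0 \ast \mathsf{D}^n[g_\bullet] = \mathsf{T}^n(f_\bullet) \ast \mathsf{D}^n[g_\bullet]$, while iterating the chain rule Proposition \ref{Diff1} (iv) yields $\overline{\delta}(f_\bullet \ast g_\bullet)_n = \mathsf{D}^n[f_\bullet \ast g_\bullet] = \mathsf{T}^n(f_\bullet) \ast \mathsf{D}^n[g_\bullet]$, so the two agree. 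For the identity I would compute $\overline{\delta}(i_\bullet)_n = \mathsf{D}^n[i_\bullet] = i_\bullet \cdot (\pi_1 \hdots \pi_1)$ from Proposition \ref{Diff1} (ii) and Lemma \ref{Tprop1} (vi), and recognise the right-hand side as the $n$-th term of the identity of $\overline{\mathcal{D}}[\overline{\mathcal{D}}[\mathbb{X}]]$ using Lemma \ref{astprop1} (vi). Finally, preservation of projections, $\overline{\delta}(i_\bullet \cdot \pi_j) = i_\bullet \cdot (i_\bullet \cdot \pi_j)$, follows from $\mathsf{D}^n[i_\bullet \cdot \pi_j] = \mathsf{D}^n[i_\bullet] \cdot \pi_j$ (Lemma \ref{Tprop1} (vi)) together with the same scalar-multiplication bookkeeping.

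I expect the main obstacle to be exactly the tangent-intertwining lemma $\mathsf{T}(\overline{\delta}(f_\bullet)) = \overline{\delta}(\mathsf{T}(f_\bullet))$, since this is the one place where the two layers of the construction genuinely interact: one must carefully keep track of which $\mathsf{T}$, $\mathsf{P}$, $\pi_j$ and composition live in $\overline{\mathcal{D}}[\mathbb{X}]$ as opposed to in $\mathbb{X}$, and repeatedly translate outer scalar multiplications and projections into inner ones via the identities of Lemmas \ref{scalarlem}, \ref{astprop1} and \ref{astprop2}. Everything else is a routine consequence of the shift description of $\mathsf{D}^n$ and the differential identities already established in Proposition \ref{Diff1}.
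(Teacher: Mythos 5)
Your proposal is correct and follows essentially the same route as the paper: both hinge on the intertwining $\mathsf{T}(\overline{\delta}(f_\bullet)) = \overline{\delta}(\mathsf{T}(f_\bullet))$, derive composition preservation from it together with the iterated chain rule $\mathsf{D}^n[f_\bullet \ast g_\bullet] = \mathsf{T}^n(f_\bullet) \ast \mathsf{D}^n[g_\bullet]$ of Proposition \ref{Diff1} (iv), and handle identities and projections via Proposition \ref{Diff1} (ii)--(iii) and the bookkeeping identities of Lemma \ref{astprop1}. The only (immaterial) difference is in the first component of the tangent: the paper first proves $\mathsf{P}(i_\bullet \cdot k) = \mathsf{T}(i_\bullet \cdot k)$ and then applies the higher-order chain rule, whereas you reduce the outer scalar multiplication to the inner one via Lemma \ref{astprop2} (vii) and Lemma \ref{astprop1} (i) and (v) before invoking Lemma \ref{Tprop1} (v) -- the same computation in a slightly different order.
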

\begin{proof} To help us distinguish between working in $\overline{\mathcal{D}}[\mathbb{X}]$ and $\overline{\mathcal{D}}\left[\overline{\mathcal{D}}[\mathbb{X}]\right]$, we will use the following notation: 
\begin{enumerate}
\item $i_\bullet$ for the identities of $\overline{\mathcal{D}}[\mathbb{X}]$ and $I_\bullet$ for the identities of $\overline{\mathcal{D}}\left[\overline{\mathcal{D}}[\mathbb{X}]\right]$;
\item $\ast$ for composition in $\overline{\mathcal{D}}[\mathbb{X}]$ and $\star$ for composition in $\overline{\mathcal{D}}\left[\overline{\mathcal{D}}[\mathbb{X}]\right]$;
\item $\cdot$ for scalar multiplication between maps of $\overline{\mathcal{D}}[\mathbb{X}]$ and maps of $\mathbb{X}$, and $\odot$ for scalar multiplication between maps of $\overline{\mathcal{D}}\left[\overline{\mathcal{D}}[\mathbb{X}]\right]$ and maps of $\overline{\mathcal{D}}[\mathbb{X}]$. 
\end{enumerate}
Note that by definition (\ref{predid}) and Lemma \ref{astprop1} (i), (v), and (vi), for $I_\bullet$ we have that:
\[ I_n = \underbrace{(i_\bullet \cdot \pi_1) \ast \hdots \ast  (i_\bullet \cdot \pi_1)}_{n-\text{times}} = i_\bullet \cdot (\underbrace{\pi_1 \hdots \pi_1}_{n-\text{times}}) \]
while for the projections $I_\bullet \odot (i_\bullet \cdot \pi_j)$ of $\overline{\mathcal{D}}\left[\overline{\mathcal{D}}[\mathbb{X}]\right]$ we have that:
\[\left( I_\bullet \odot (i_\bullet \cdot \pi_j) \right)_n = I_n \ast (i_\bullet \cdot \pi_j) = \left( i_\bullet \cdot (\underbrace{\pi_1 \hdots \pi_1}_{n-\text{times}}) \right)  \ast (i_\bullet \cdot \pi_j) =  i_\bullet \cdot (\underbrace{\pi_1 \hdots \pi_1}_{n-\text{times}}\pi_j) \]
Now using multiple iterations of Proposition \ref{Diff1} (ii) and (iii), we can easily check that $\overline{\delta}$ preserves the identities and projections: 
\begin{align*}
\overline{\delta}(i_\bullet)_n = \mathsf{D}^n[i_\bullet] =  i_\bullet \cdot (\underbrace{\pi_1 \hdots \pi_1}_{n-\text{times}}) = I_n
\end{align*}
\begin{align*}
\overline{\delta}(i_\bullet \cdot \pi_j)_n = \mathsf{D}^n[i_\bullet \cdot \pi_j] = i_\bullet \cdot (\underbrace{\pi_1 \hdots \pi_1}_{n-\text{times}}\pi_j) = (
I_\bullet \odot (i_\bullet \ast \pi_j) )_n
\end{align*}
To show that $\overline{\delta}$ preserves composition, first consider the product functor $\mathsf{P}: \overline{\mathcal{D}}[\mathbb{X}] \to \overline{\mathcal{D}}[\mathbb{X}]$ as defined at the beginning of Section \ref{PREDsubsec}. In particular using Lemma \ref{Tprop1} (ii), functoriality of $\mathsf{T}$, and Lemma \ref{astprop2} (vii) we have that:
\[\mathsf{T}(i_\bullet \cdot k) = \mathsf{T}(i_\bullet) \cdot \mathsf{P}(k) = i_\bullet \cdot \mathsf{P}(k) = (i_\bullet \times i_\bullet) \cdot (k \times k) = (i_\bullet \cdot k) \times (i_\bullet \cdot k) = \mathsf{P}(i_\bullet \cdot k) \]
Then it follows that: 
\begin{align*}
\mathsf{T}(\overline{\delta}(f_\bullet))_n &=~ \langle \mathsf{P}^n(i_\bullet \cdot \pi_0) \overline{\delta}(f_\bullet)_n, \overline{\delta}(f_\bullet)_{n+1} \rangle\\
&=~ \langle \mathsf{T}^n(i_\bullet \cdot \pi_0) \mathsf{D}^n[f_\bullet], \mathsf{D}^{n+1}[f_\bullet] \rangle\\
&=~ \langle \mathsf{D}^n[\pi_0 \cdot f_\bullet], \mathsf{D}^{n+1}[f_\bullet] \rangle\\
&=~ \mathsf{D}^n[\langle \pi_0 \cdot f_\bullet, \mathsf{D}[f_\bullet] \rangle] \\
&=~ \mathsf{D}^n[\mathsf{T}(f_\bullet)]\\
&=~ \overline{\delta}(\mathsf{T}(f_\bullet))_n
\end{align*}
Finally using this identity that $\overline{\delta}\mathsf{T}= \mathsf{T}\overline{\delta}$ and the higher order version of Proposition \ref{Diff1} (iv), we obtain that:
\begin{align*}
\left(\overline{\delta}(f_\bullet) \star \overline{\delta}(g_\bullet)\right)_n &=~Ê\mathsf{T}^n(\overline{\delta}(f_\bullet))_0 \ast \overline{\delta}(g_\bullet)_n \\
&=~ \overline{\delta}\left(\mathsf{T}^n(f_\bullet)\right)_0 \ast \overline{\delta}(g_\bullet)_n \\
&=~ \mathsf{T}^n(f_\bullet) \ast \mathsf{D}^n[g_\bullet] \\
&=~ \mathsf{D}^n[f_\bullet \ast g_\bullet] \\
&=~Ê\overline{\delta}(f_\bullet \ast g_\bullet)_n
\end{align*}
\end{proof} 

\begin{lemma}\label{natdelta} $\overline{\delta}: \overline{\mathcal{D}} \Rightarrow \overline{\mathcal{D}}\overline{\mathcal{D}}$ is a natural transformation. 
\end{lemma}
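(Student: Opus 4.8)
The plan is to verify directly that the naturality square for $\overline{\delta}$ commutes. Lemma~\ref{deltalemma1} already establishes that each component $\overline{\delta}: \overline{\mathcal{D}}[\mathbb{X}] \to \overline{\mathcal{D}}\left[\overline{\mathcal{D}}[\mathbb{X}]\right]$ is a well-defined strict Cartesian functor, so the only thing left is to check that for every strict Cartesian functor $\mathsf{F}: \mathbb{X} \to \mathbb{Y}$ the diagram
\[ \xymatrixcolsep{5pc}\xymatrix{
\overline{\mathcal{D}}[\mathbb{X}] \ar[d]_-{\overline{\delta}} \ar[r]^-{\overline{\mathcal{D}}[\mathsf{F}]} & \overline{\mathcal{D}}[\mathbb{Y}] \ar[d]^-{\overline{\delta}} \\
\overline{\mathcal{D}}\left[\overline{\mathcal{D}}[\mathbb{X}]\right] \ar[r]_-{\overline{\mathcal{D}}\left[\overline{\mathcal{D}}[\mathsf{F}]\right]} & \overline{\mathcal{D}}\left[\overline{\mathcal{D}}[\mathbb{Y}]\right]
} \]
commutes. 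On objects both composites send $A$ to $\mathsf{F}(A)$, so all of the content lies in the morphism components, and I would argue there by a termwise comparison of pre-$\mathsf{D}$-sequences.

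The key observation I would isolate first is that $\overline{\mathcal{D}}[\mathsf{F}]$ commutes with the differential $\mathsf{D}$, that is $\mathsf{D}\!\left[\overline{\mathcal{D}}[\mathsf{F}](f_\bullet)\right] = \overline{\mathcal{D}}[\mathsf{F}]\!\left(\mathsf{D}[f_\bullet]\right)$. This is immediate from the definitions: by Definition~\ref{tandiffseq}~(ii) the differential is the left shift $\mathsf{D}[g_\bullet]_n = g_{n+1}$, while $\overline{\mathcal{D}}[\mathsf{F}]$ acts termwise by $\mathsf{F}$, so both sides evaluate in degree $n$ to $\mathsf{F}(f_{n+1})$. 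Iterating this identity yields $\mathsf{D}^n\!\left[\overline{\mathcal{D}}[\mathsf{F}](f_\bullet)\right] = \overline{\mathcal{D}}[\mathsf{F}]\!\left(\mathsf{D}^n[f_\bullet]\right)$ for every $n$.

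With this in hand, I would fix a pre-$\mathsf{D}$-sequence $f_\bullet$ of $\mathbb{X}$ and compare the two pre-$\mathsf{D}$-sequences of $\overline{\mathcal{D}}\left[\overline{\mathcal{D}}[\mathbb{Y}]\right]$ obtained by going around the square. For $n \geq 1$, unwinding the definition $\overline{\delta}(-)_n = \mathsf{D}^n[-]$ and the termwise action of $\overline{\mathcal{D}}\left[\overline{\mathcal{D}}[\mathsf{F}]\right]$ gives
\begin{align*}
\overline{\delta}\!\left(\overline{\mathcal{D}}[\mathsf{F}](f_\bullet)\right)_n &=~ \mathsf{D}^n\!\left[\overline{\mathcal{D}}[\mathsf{F}](f_\bullet)\right] \\
&=~ \overline{\mathcal{D}}[\mathsf{F}]\!\left(\mathsf{D}^n[f_\bullet]\right) \\
&=~ \overline{\mathcal{D}}\left[\overline{\mathcal{D}}[\mathsf{F}]\right]\!\left(\overline{\delta}(f_\bullet)\right)_n,
\end{align*}
where the middle equality is exactly the iterated commutation established above. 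In degree zero, both composites reduce to $\overline{\mathcal{D}}[\mathsf{F}](f_\bullet)$, using $\overline{\delta}(-)_0 = (-)$ on each side. Hence the two pre-$\mathsf{D}$-sequences agree in every degree and the square commutes.

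I do not expect any genuine obstacle here: the entire argument rests on the fact that $\overline{\mathcal{D}}[\mathsf{F}]$ commutes with the shift $\mathsf{D}$, which is a one-line consequence of the definitions. The only point demanding a little care is the bookkeeping that separates the case $n = 0$ (where $\overline{\delta}$ returns the sequence itself) from the cases $n \geq 1$ (where it returns the $n$-fold differential); once these are treated separately the verification is a routine termwise comparison.
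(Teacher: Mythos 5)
Your proof is correct and follows essentially the same route as the paper: the paper's proof is the same termwise verification, carried out with explicit double indexing via $\mathsf{D}^n[f_\bullet]_m = f_{n+m}$ so that both composites evaluate to $\mathsf{F}(f_{n+m})$, which is exactly your commutation identity $\mathsf{D}^n\!\left[\overline{\mathcal{D}}[\mathsf{F}](f_\bullet)\right] = \overline{\mathcal{D}}[\mathsf{F}]\!\left(\mathsf{D}^n[f_\bullet]\right)$ unwound at the index $m$. Packaging that identity as a separate one-line lemma before comparing at a single index is a purely organizational difference, and your handling of the $n=0$ case matches the definition of $\overline{\delta}$.
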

\begin{proof} $\overline{\delta}$ is well defined by Lemma \ref{deltalemma1}. We must show the for a strict Cartesian functor ${\mathsf{F}: \mathbb{X} \to \mathbb{Y}}$, the following diagram commutes: 
  \[  \xymatrixcolsep{5pc}\xymatrix{\overline{\mathcal{D}}[\mathbb{X}] \ar[d]_-{\overline{\delta}} \ar[r]^-{\overline{\mathcal{D}}[\mathsf{F}]} &  \overline{\mathcal{D}}[\mathbb{Y}] \ar[d]^-{\overline{\delta}} \\
   \overline{\mathcal{D}}\left[\overline{\mathcal{D}}[\mathbb{X}]\right]\ar[r]_-{\overline{\mathcal{D}}\left[\overline{\mathcal{D}}[\mathsf{F}]\right]} &\overline{\mathcal{D}}\left[\overline{\mathcal{D}}[\mathbb{Y}]\right]
  } \]
  On objects this is clear, while for a pre-$\mathsf{D}$-sequence $f_\bullet$, note that $\mathsf{D}^n[f_\bullet]_m = f_{n+m}$. Then getting our hands dirty a bit with double indexing, we have that: 
  \begin{align*}
 \left( \overline{\mathcal{D}}\left[\overline{\mathcal{D}}[\mathsf{F}]\right]\left(\overline{\delta}(f_\bullet) \right)_n \right)_m &=~
  \left( \overline{\mathcal{D}}[\mathsf{F}](\overline{\delta}(f_\bullet)_n) \right)_m\\
  &=~ \left( \overline{\mathcal{D}}[\mathsf{F}](\mathsf{D}^n[f]) \right)_m \\
  &=~ \mathsf{F}(\mathsf{D}^n[f]_m) \\
  &=~ \mathsf{F}(f_{n+m}) \\
  &=~ \overline{\mathcal{D}}[\mathsf{F}](f_\bullet)_{n+m} \\
  &=~Ê\left(\mathsf{D}^n[\overline{\mathcal{D}}[\mathsf{F}](f_\bullet)] \right)_m\\
  &=~Ê\left( \overline{\delta}\left( \overline{\mathcal{D}}[\mathsf{F}](f_\bullet) \right)_n \right)_m 
\end{align*}
\end{proof} 

Now we check that we have indeed a comonad: 

\begin{proposition}\label{predcom1} $(\overline{\mathcal{D}}, \overline{\delta}, \overline{\varepsilon})$ is a comonad on $\mathsf{CART}$. 
\end{proposition}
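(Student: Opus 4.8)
The plan is to verify the three comonad identities---the two counit laws and coassociativity---since Lemma \ref{Dbarfunctor} already gives that $\overline{\mathcal{D}}$ is an endofunctor on $\mathsf{CART}$, and Lemmas \ref{natepsilon} and \ref{natdelta} establish that $\overline{\varepsilon}$ and $\overline{\delta}$ are natural transformations. Observe first that $\overline{\delta}$ and $\overline{\varepsilon}$ are both the identity on objects, so each coherence identity holds automatically on objects, and all the content lies in checking them on pre-$\mathsf{D}$-sequences. The single computational fact driving every verification is that the differential of pre-$\mathsf{D}$-sequences is just a shift, so that $\mathsf{D}^n[f_\bullet]_m = f_{n+m}$, and consequently $\mathsf{D}^a\!\left[\mathsf{D}^b[f_\bullet]\right] = \mathsf{D}^{a+b}[f_\bullet]$, with the convention $\mathsf{D}^0[f_\bullet]=f_\bullet = \overline{\delta}(f_\bullet)_0$.

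For the counit laws, the left-hand one is immediate: for a pre-$\mathsf{D}$-sequence $f_\bullet$ of $\mathbb{X}$ we have $\overline{\varepsilon}\!\left(\overline{\delta}(f_\bullet)\right) = \overline{\delta}(f_\bullet)_0 = f_\bullet$ directly from the definition of $\overline{\delta}$. For the right-hand one, I would compute componentwise, using that $\overline{\mathcal{D}}[\overline{\varepsilon}]$ applies $\overline{\varepsilon}$ termwise and that $\overline{\varepsilon}$ evaluates at $0$:
\[\overline{\mathcal{D}}[\overline{\varepsilon}]\!\left(\overline{\delta}(f_\bullet)\right)_n = \overline{\varepsilon}\!\left(\overline{\delta}(f_\bullet)_n \right) = \overline{\varepsilon}\!\left(\mathsf{D}^n[f_\bullet]\right) = \mathsf{D}^n[f_\bullet]_0 = f_n,\]
so both counit composites equal $1_{\overline{\mathcal{D}}[\mathbb{X}]}$.

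For coassociativity, both composites $\overline{\delta}\,\overline{\delta}_{\overline{\mathcal{D}}[\mathbb{X}]}$ and $\overline{\delta}\,\overline{\mathcal{D}}[\overline{\delta}]$ send $f_\bullet$ to a pre-$\mathsf{D}$-sequence of $\overline{\mathcal{D}}\!\left[\overline{\mathcal{D}}[\mathbb{X}]\right]$ whose terms are themselves pre-$\mathsf{D}$-sequences of $\overline{\mathcal{D}}[\mathbb{X}]$ whose terms are in turn pre-$\mathsf{D}$-sequences of $\mathbb{X}$, so the comparison comes down to a triple index bookkeeping. Writing $F_\bullet := \overline{\delta}(f_\bullet)$, so that $F_n = \mathsf{D}^n[f_\bullet]$, the left composite gives $\left(\overline{\delta}(F_\bullet)_n\right)_m = \mathsf{D}^n[F_\bullet]_m = F_{n+m} = \mathsf{D}^{n+m}[f_\bullet]$ (here $\mathsf{D}$ is the shift at the level of $\overline{\mathcal{D}}\!\left[\overline{\mathcal{D}}[\mathbb{X}]\right]$), while the right composite gives $\left(\overline{\mathcal{D}}[\overline{\delta}](F_\bullet)_n\right)_m = \overline{\delta}(F_n)_m = \mathsf{D}^m[F_n] = \mathsf{D}^m\!\left[\mathsf{D}^n[f_\bullet]\right] = \mathsf{D}^{n+m}[f_\bullet]$. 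Taking one further index $k$, both sides evaluate to $f_{n+m+k}$, so the two triply-indexed sequences agree and coassociativity holds.

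The verifications are all routine; the only point demanding care is the triple indexing in the coassociativity check, where one must keep track of which nested $\overline{\mathcal{D}}$-level each shift $\mathsf{D}$ acts in. Once the identity $\mathsf{D}^a\!\left[\mathsf{D}^b[f_\bullet]\right] = \mathsf{D}^{a+b}[f_\bullet]$ is in hand, every law collapses to the observation that shifting then shifting adds the shift amounts and that evaluation at $0$ recovers the appropriate term.
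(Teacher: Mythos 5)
Your proof is correct and takes essentially the same route as the paper's: both verify the two counit triangles and the coassociativity square componentwise, with every identity collapsing to the shift law $\mathsf{D}^n[f_\bullet]_m = f_{n+m}$ (and hence $\mathsf{D}^a\!\left[\mathsf{D}^b[f_\bullet]\right] = \mathsf{D}^{a+b}[f_\bullet]$), exactly as in the paper's double-indexing computation. Your additional evaluation at a third index $k$ is harmless but unnecessary, since agreement of the two composites at level $(n,m)$ as pre-$\mathsf{D}$-sequences of $\mathbb{X}$ already establishes the square.
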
 
\begin{proof} This is a matter of checking that the following two diagrams commute: 
  \[  \xymatrixcolsep{5pc}\xymatrix{\overline{\mathcal{D}}[\mathbb{X}] \ar@{=}[dr]^-{}\ar[d]_-{\overline{\delta}} \ar[r]^-{\overline{\delta}} &  \overline{\mathcal{D}}\left[\overline{\mathcal{D}}[\mathbb{X}]\right] \ar[d]^-{\overline{\mathcal{D}}[\overline{\varepsilon}]} & \overline{\mathcal{D}}[\mathbb{X}] \ar[d]_-{\overline{\delta}} \ar[r]^-{\overline{\delta}} &  \overline{\mathcal{D}}\left[\overline{\mathcal{D}}[\mathbb{X}]\right] \ar[d]^-{\overline{\delta}} \\
    \overline{\mathcal{D}}\left[\overline{\mathcal{D}}[\mathbb{X}]\right] \ar[r]_-{\overline{\varepsilon}} & \overline{\mathcal{D}}[\mathbb{X}] &  \overline{\mathcal{D}}\left[\overline{\mathcal{D}}[\mathbb{X}]\right]\ar[r]_-{\overline{\mathcal{D}}[\overline{\delta}]} &\overline{\mathcal{D}}\left[\overline{\mathcal{D}}\left[\overline{\mathcal{D}}[\mathbb{X}]\right] \right]
  } \]
These all follow by definition. Starting with the lower triangle: 
\[ \overline{\varepsilon}(\overline{\delta}(f_\bullet)) = \overline{\delta}(f_\bullet)_0 = f_\bullet \]
then the upper triangle: 
\[\overline{\mathcal{D}}[\overline{\varepsilon}](\overline{\delta}(f_\bullet))_n = \overline{\varepsilon}(\overline{\delta}(f_\bullet)_n)= \overline{\varepsilon}(\mathsf{D}^n[f_\bullet]) =\mathsf{D}^n[f_\bullet]_0 = f_n  \]
and lastly the right square -- getting our hands dirty again with double indexing: 
\begin{align*}
\left(\overline{\mathcal{D}}[\overline{\delta}](\overline{\delta}(f_\bullet))_n \right)_m &=~ \left( \overline{\delta}(\overline{\delta}(f_\bullet)_n) \right)_m \\
&=~\left(Ê\overline{\delta}(\mathsf{D}^n[f_\bullet])\right)_m \\
&=~Ê\mathsf{D}^{n+m}[f_\bullet] \\
&=~ \overline{\delta}(f_\bullet)_{n+m} \\
&=~ \mathsf{D}^n[\overline{\delta}(f_\bullet)]_m \\
&=~ \left(\overline{\delta}(\overline{\delta}(f_\bullet))_n \right)_m
\end{align*}
\end{proof} 

\subsection{Cartesian Left Additive Structure of Pre-$\mathsf{D}$-Sequences}\label{predaddsec}

When the base category is a Cartesian left additive category, one can also sum pre-$\mathsf{D}$-sequences pointwise. 

 \begin{proposition} If $\mathbb{X}$ is a Cartesian left additive category, then so is $\overline{\mathcal{D}}[\mathbb{X}]$ where:
\begin{enumerate}
\item The zero map is the pre-$\mathsf{D}$-sequence $0_\bullet: A \to B$ where $0_n = 0$;
\item The sum of pre-$\mathsf{D}$-sequences $f_\bullet: A \to B$ and $g_\bullet: A \to B$ is $f_\bullet + g_\bullet: A \to B$ where $\left( f_\bullet + g_\bullet \right)_n := f_n + g_n$. 
\end{enumerate}
\end{proposition}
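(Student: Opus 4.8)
The plan is to verify the three requirements of Definition \ref{CLACdef}: that each hom-set of $\overline{\mathcal{D}}[\mathbb{X}]$ is a commutative monoid, that composition on the left preserves this additive structure, and that the projections are additive. Since the finite product structure was already established in Proposition \ref{preDprod}, these are the only conditions left to check, and the strategy throughout is that each axiom reduces pointwise (degree by degree) to the corresponding axiom in $\mathbb{X}$.

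First I would check that the pointwise operations are well defined and make each hom-set a commutative monoid. The sum $(f_\bullet + g_\bullet)_n := f_n + g_n$ and the zero $(0_\bullet)_n := 0$ are again maps of type $\mathsf{P}^n(A) \to B$, hence legitimate pre-$\mathsf{D}$-sequences. Commutativity, associativity, and the unit law for these operations hold in each degree $n$ precisely because the hom-set $\mathbb{X}(\mathsf{P}^n(A), B)$ is itself a commutative monoid.

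Next, for left additivity I would unwind the composition formula of Definition \ref{predcat} (iv), namely $(f_\bullet \ast g_\bullet)_n = \mathsf{T}^n(f_\bullet)_0\, g_n$. This gives $(f_\bullet \ast (g_\bullet + h_\bullet))_n = \mathsf{T}^n(f_\bullet)_0 (g_n + h_n)$, and since $\mathbb{X}$ is left additive, composition on the left by the fixed map $\mathsf{T}^n(f_\bullet)_0$ distributes over the sum and sends $0$ to $0$. Termwise this yields $f_\bullet \ast (g_\bullet + h_\bullet) = f_\bullet \ast g_\bullet + f_\bullet \ast h_\bullet$ and $f_\bullet \ast 0_\bullet = 0_\bullet$, which is exactly left additivity in $\overline{\mathcal{D}}[\mathbb{X}]$.

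Finally, for additivity of the projections $i_\bullet \cdot \pi_j$, I would invoke Lemma \ref{astprop1} (vi) to rewrite composition on the right by a projection as scalar multiplication, $f_\bullet \ast (i_\bullet \cdot \pi_j) = f_\bullet \cdot \pi_j$, whose $n$-th term is simply $f_n \pi_j$. Additivity then follows in each degree from the additivity of $\pi_j$ in $\mathbb{X}$, since $(f_n + g_n)\pi_j = f_n \pi_j + g_n \pi_j$ and $0\,\pi_j = 0$. There is no genuine obstacle in this argument, as every condition is inherited pointwise from $\mathbb{X}$; the only step requiring mild care is the projection axiom, where one must first recognize that additivity is to be tested against $\ast$-composition and then reduce that composite to the scalar action $\cdot$ via Lemma \ref{astprop1} (vi) before the base-category axiom can be applied.
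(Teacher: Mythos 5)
Your proposal is correct and matches the paper's approach: the paper dismisses this proposition with the one-line remark that it is ``straightforward by the Cartesian left additive structure of $\mathbb{X}$,'' and your pointwise verification is precisely the routine check being alluded to. Your one substantive observation --- that additivity of the projections $i_\bullet \cdot \pi_j$ must be tested against $\ast$-composition and reduced to the scalar action via Lemma \ref{astprop1} (vi) before invoking additivity of $\pi_j$ in $\mathbb{X}$ --- is exactly the right way to fill in the only non-trivial detail the paper leaves implicit.
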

\begin{proof}
This is straightforward by the Cartesian left additive structure of $\mathbb{X}$. 
\end{proof} 

\begin{lemma}\label{astprop3} The following equalities hold: 
\begin{enumerate}[{\em (i)}]
\item $h \cdot 0_\bullet = 0_\bullet$;
\item $f_\bullet \cdot 0 = 0_\bullet$;
\item $f_\bullet \cdot (h+k) = (f_\bullet \cdot h) + (g_\bullet \cdot k)$; 
\item $h \cdot (f_\bullet + g_\bullet) = (h \cdot f_\bullet) + (h \cdot g_\bullet)$; 
\item If $k$ is additive, $0_\bullet \cdot k = 0_\bullet$;
\item If $k$ is additive, $(f_\bullet + g_\bullet) \cdot k = (f_\bullet \cdot k) + (g_\bullet \cdot k)$;
\item $f_\bullet \cdot \langle 1, 0 \rangle = \langle f_\bullet, 0_\bullet \rangle$;
\item $\langle f_\bullet, \langle g_\bullet, g^\prime_\bullet \rangle \rangle \cdot (1 \times (\pi_0 + \pi_1)) = \langle f_\bullet, g_\bullet + g^\prime_\bullet \rangle$;
\item $\langle f_\bullet, g_\bullet  \rangle \cdot \ell = \langle \langle f_\bullet, 0_\bullet \rangle, \langle 0_\bullet, g_\bullet \rangle \rangle$. 
\end{enumerate}
\end{lemma}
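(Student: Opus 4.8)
The plan is to prove Lemma \ref{astprop3} by the ``internal'' method, verifying each identity componentwise (i.e.\ term $n$ by term $n$) and reducing everything to the corresponding facts about the finite product and Cartesian left additive structure of the base category $\mathbb{X}$, which are collected in Lemma \ref{claclemma}. Recall the defining formulas $\left(h \cdot f_\bullet\right)_n = \mathsf{P}^n(h) f_n$ and $\left(f_\bullet \cdot k\right)_n = f_n k$, together with $\left(f_\bullet + g_\bullet\right)_n = f_n + g_n$, $\left(0_\bullet\right)_n = 0$, and $\langle f_\bullet, g_\bullet \rangle_n = \langle f_n, g_n \rangle$. Each claimed identity of pre-$\mathsf{D}$-sequences then unfolds to an identity of ordinary maps $\mathsf{P}^n(A) \to B$ (or into a product), so the whole lemma amounts to a finite list of diagram chases in $\mathbb{X}$.

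First I would dispatch the items that use only left additivity of composition and the basic scalar formulas. For (i), $\left(h \cdot 0_\bullet\right)_n = \mathsf{P}^n(h)\, 0 = 0$ since composition on the left preserves $0$. For (ii), $\left(f_\bullet \cdot 0\right)_n = f_n\, 0 = 0$, again by left additivity. For (iii) and (iv), which are the distributivity of scalar multiplication over sums, I would expand $\left(f_\bullet \cdot (h+k)\right)_n = f_n(h+k) = f_n h + f_n k$ and $\left(h \cdot (f_\bullet+g_\bullet)\right)_n = \mathsf{P}^n(h)(f_n+g_n) = \mathsf{P}^n(h)f_n + \mathsf{P}^n(h)f_n$, both immediate from $f(g+h)=fg+fh$; note that (iii) as stated has a typo, with $g_\bullet$ where $f_\bullet$ is intended. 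Items (v) and (vi) require the additivity hypothesis on $k$: here I would use that composition on the \emph{right} by an additive map preserves $+$ and $0$, so $\left(0_\bullet \cdot k\right)_n = 0k = 0$ and $\left(f_\bullet + g_\bullet\right)_n k = (f_n+g_n)k = f_n k + g_n k$.

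The more structural items are (vii), (viii), and (ix), which involve the named maps $\langle 1,0\rangle$, $1 \times (\pi_0+\pi_1)$, and $\ell$ from the differential combinator axioms. For these I would reduce the scalar action $f_\bullet \cdot (-)$ to the componentwise composite $f_n(-)$ and then apply Lemma \ref{claclemma}(i) (pairing distributes over sums, and $\langle 0,0\rangle = 0$) together with the elementary product identities $\langle f, g\rangle \pi_j$ and the definitions of the structural maps. For (vii), $f_n\langle 1,0\rangle = \langle f_n, f_n 0\rangle = \langle f_n, 0\rangle$, which is $\langle f_\bullet, 0_\bullet\rangle_n$. For (viii), $\langle f_n, \langle g_n, g'_n\rangle\rangle\,(1 \times (\pi_0+\pi_1))$ unfolds, via Lemma \ref{claclemma}(i), to $\langle f_n, g_n + g'_n\rangle$. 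For (ix), expanding $\langle f_n, g_n\rangle\,\ell$ with $\ell = \langle 1,0\rangle \times \langle 0,1\rangle$ gives $\langle \langle f_n,0\rangle, \langle 0,g_n\rangle\rangle$ after using $f_n 0 = 0 = g_n 0$.

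I expect the main obstacle to be purely bookkeeping rather than conceptual: carefully expanding the composites with the structural maps $\langle 1,0\rangle$, $1 \times (\pi_0+\pi_1)$, and $\ell$ in terms of projections and pairings, and invoking Lemma \ref{claclemma}(i) at exactly the right spot so that pairings and sums commute past one another. In particular, for (viii) and (ix) one must be attentive to how the product map acts on the nested pairing and to the fact that $f_n 0 = 0$ holds by left additivity (not requiring additivity of $f_n$), whereas the right-composition items (v) and (vi) genuinely need the additivity hypothesis on $k$. Since each identity is verified for an arbitrary index $n$ by a short calculation in $\mathbb{X}$, no induction or appeal to the tangent functor is needed, which is why the author leaves the routine cases to the reader.
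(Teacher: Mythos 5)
Your componentwise verification is correct and is exactly the routine ``internal'' argument the paper has in mind: Lemma \ref{astprop3} is stated there without proof, leaving precisely these checks to the reader, and you rightly flag the typo in (iii) and correctly separate the items needing only left additivity from (v)--(vi), which genuinely need $k$ additive. Two cosmetic points that do not affect correctness: in your item (iv) the second summand should read $\mathsf{P}^n(h)g_n$ rather than $\mathsf{P}^n(h)f_n$, and in (viii) the decisive step $\langle g_n, g'_n\rangle(\pi_0+\pi_1) = g_n + g'_n$ is an instance of left additivity of composition rather than of Lemma \ref{claclemma}(i).
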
 

Notice that Lemma \ref{astprop3} (vii), (viii), and (ix) involve $\langle 1, 0 \rangle$, $\left (1 \times (\pi_0 + \pi_1) \right)$ and $\ell$ from the differential combinator axioms \textbf{[CD.2]} and \textbf{[CD.6]}. These, along with Lemma \ref{astprop1} (viii), will be crucial tools for certain proofs in Section \ref{Dsec}. 

The additive structure is also compatible with the differential and tangent of pre-$\mathsf{D}$-sequences: 

\begin{proposition}\label{Diff2} The following equalities hold: 
\begin{enumerate}[{\em (i)}]
\item $\mathsf{D}[0_\bullet] = 0_\bullet$;
\item $\mathsf{D}[f_\bullet + g_\bullet]= \mathsf{D}[f_\bullet] + \mathsf{D}[g_\bullet]$; 
\item $\mathsf{T}(0_\bullet) = 0_\bullet$;
\item $\mathsf{T}(f_\bullet + g_\bullet)= \mathsf{T}(f_\bullet) + \mathsf{T}(g_\bullet)$. 
\end{enumerate}
\end{proposition}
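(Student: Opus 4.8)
The plan is to prove all four equalities by the ``internal'' method, verifying them termwise, since each follows at once from the pointwise definition of the additive structure (the zero map has $0_n = 0$ and sums are computed componentwise) together with the shift-left definition of $\mathsf{D}$ and the formula for $\mathsf{T}$ from Definition \ref{tandiffseq}.

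First I would dispatch (i) and (ii), which are immediate from the fact that $\mathsf{D}$ merely shifts a sequence to the left (Definition \ref{tandiffseq} (ii)). For (i), $\mathsf{D}[0_\bullet]_n = (0_\bullet)_{n+1} = 0 = (0_\bullet)_n$; for (ii), $\mathsf{D}[f_\bullet + g_\bullet]_n = (f_\bullet + g_\bullet)_{n+1} = f_{n+1} + g_{n+1}$, which is exactly $\left(\mathsf{D}[f_\bullet] + \mathsf{D}[g_\bullet]\right)_n$. Part (iii) is nearly as quick: expanding via Definition \ref{tandiffseq} (i) gives $\mathsf{T}(0_\bullet)_n = \langle \mathsf{P}^n(\pi_0)(0_\bullet)_n, (0_\bullet)_{n+1} \rangle = \langle \mathsf{P}^n(\pi_0)\,0, 0 \rangle$, and since composition on the left preserves the additive structure we have $\mathsf{P}^n(\pi_0)\,0 = 0$, so that $\langle 0, 0 \rangle = 0$ by Lemma \ref{claclemma} (i); hence $\mathsf{T}(0_\bullet)_n = 0 = (0_\bullet)_n$.

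The only step requiring marginally more care, and hence the main (though still routine) obstacle, is (iv). Expanding gives $\mathsf{T}(f_\bullet + g_\bullet)_n = \langle \mathsf{P}^n(\pi_0)(f_n + g_n), f_{n+1} + g_{n+1} \rangle$. The idea is to distribute and then resplit: left additivity of composition yields $\mathsf{P}^n(\pi_0)(f_n + g_n) = \mathsf{P}^n(\pi_0) f_n + \mathsf{P}^n(\pi_0) g_n$, and Lemma \ref{claclemma} (i) then splits the pairing of a sum into the sum of the pairings, producing $\langle \mathsf{P}^n(\pi_0) f_n, f_{n+1} \rangle + \langle \mathsf{P}^n(\pi_0) g_n, g_{n+1} \rangle = \mathsf{T}(f_\bullet)_n + \mathsf{T}(g_\bullet)_n$. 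The crux is simply that $\mathsf{T}_n$ is built from a left-composite and a pairing, both of which respect addition, so the whole expression does too.

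Alternatively, (iii) and (iv) admit a more conceptual ``external'' derivation from (i) and (ii) using Proposition \ref{Diff1} (i), namely $\mathsf{T}(f_\bullet) = \langle \pi_0 \cdot f_\bullet, \mathsf{D}[f_\bullet] \rangle$: then (iv) follows from $\pi_0 \cdot (f_\bullet + g_\bullet) = (\pi_0 \cdot f_\bullet) + (\pi_0 \cdot g_\bullet)$ (Lemma \ref{astprop3} (iv)), from part (ii), and from Lemma \ref{claclemma} (i) applied termwise to the pairing, while (iii) follows from Lemma \ref{astprop3} (i) and part (i). I would mention this route as the cleaner bookkeeping but carry out the short termwise computation above, since it is self-contained and avoids invoking the full list of scalar-multiplication identities.
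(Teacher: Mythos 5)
Your proof is correct, and for (i) and (ii) it coincides verbatim with the paper's: both are the immediate termwise computations $\mathsf{D}[0_\bullet]_n = 0_{n+1} = 0$ and $\mathsf{D}[f_\bullet + g_\bullet]_n = f_{n+1} + g_{n+1}$. Where you diverge is in (iii) and (iv): your primary argument is the \emph{internal} one, expanding $\mathsf{T}(f_\bullet + g_\bullet)_n = \langle \mathsf{P}^n(\pi_0)(f_n + g_n), f_{n+1} + g_{n+1} \rangle$ and then using left additivity of composition together with Lemma \ref{claclemma} (i) to split the pairing, whereas the paper argues \emph{externally}: it invokes the already-established identity $\mathsf{T}(f_\bullet) = \langle \pi_0 \cdot f_\bullet, \mathsf{D}[f_\bullet] \rangle$ of Proposition \ref{Diff1} (i), then applies Lemma \ref{astprop3} (i) and (iv) for the scalar-multiplication part and Lemma \ref{claclemma} (i) for the pairing, never descending to indices. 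Both computations are sound (your use of $\mathsf{P}^n(\pi_0)\,0 = 0$ and $\mathsf{P}^n(\pi_0)(f_n + g_n) = \mathsf{P}^n(\pi_0) f_n + \mathsf{P}^n(\pi_0) g_n$ is exactly left additivity, and Lemma \ref{claclemma} (i) does the rest), and indeed your closing remark correctly identifies the paper's route as the external alternative via Lemma \ref{astprop3} (i)/(iv). What each buys: your termwise version is self-contained and needs only the definitions in Definition \ref{tandiffseq} plus one lemma, so it is the more elementary verification; the paper's version is more in the spirit of its stated preference for reusable algebraic identities on pre-$\mathsf{D}$-sequences, which pays off later when the same external manipulations recur in Proposition \ref{Dprop1} and Lemma \ref{Dcomp}, where termwise expansion would be considerably messier. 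Since the two derivations differ only in this stylistic choice and you exhibit both, there is nothing missing.
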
 
\begin{proof} 
\begin{enumerate}[{\em (i)}]
\item Follows by definition: 
\[\mathsf{D}[0_\bullet]_n = 0_{n+1} = 0 = 0_n\]
\item Also follows by definition: 
\[\mathsf{D}[f_\bullet + g_\bullet] = \left( f_\bullet + g_\bullet \right)_{n+1} = f_{n+1} + g_{n+1} = \mathsf{D}[f_\bullet]_n + \mathsf{D}[g_\bullet]_n = (\mathsf{D}[f_\bullet] + \mathsf{D}[g_\bullet])_n \]
\item Using (i), Proposition \ref{Diff1} (i), Lemma \ref{astprop3} (i), and Lemma \ref{claclemma} (i) we have that: 
\[\mathsf{T}(0_\bullet)= \langle \pi_0 \cdot 0_\bullet, \mathsf{D}[0_\bullet] \rangle = \langle 0_\bullet, 0_\bullet \rangle = 0_\bullet \]
\item Using (ii), Proposition \ref{Diff1} (i), Lemma \ref{astprop3} (iv), and Lemma \ref{claclemma} (i) we have that: 
\[\mathsf{T}(f_\bullet + g_\bullet)= \langle \pi_0 \cdot (f_\bullet + g_\bullet), \mathsf{D}[f_\bullet + g_\bullet] \rangle  = \langle \pi_0 \cdot f_\bullet, \mathsf{D}[f_\bullet] \rangle + \langle \pi_0 \cdot g_\bullet, \mathsf{D}[g_\bullet] \rangle = \mathsf{T}(f_\bullet) + \mathsf{T}(g_\bullet)\]
\end{enumerate}
\end{proof} 

Note that we have shown another differential combinator axiom: Proposition \ref{Diff2} (i) and (ii) are precisely \textbf{[CD.1]}. Therefore the only axioms remaining are \textbf{[CD.2]}, \textbf{[CD.6]}, and \textbf{[CD.7]}. To obtaining these last three axioms, we will have to consider special kinds of pre-$\mathsf{D}$-sequences which we shall call $\mathsf{D}$-sequences (Definition \ref{Ddef}) and are discussed in the next section.

The comonad from Section \ref{predcomsec} extends to the category of Cartesian left additive categories. Let $\mathsf{CLAC}$ be the category of Cartesian left additive category and strict Cartesian functors between them which preserve the additive structure -- which we will call \textbf{strict Cartesian left additive functors}. Again, to make things explicit, a strict Cartesian functor $\mathsf{F}$ preserves the additive structure if $\mathsf{F}(f+g)=\mathsf{F}(f)+\mathsf{F}(g)$ and $\mathsf{F}(0)=0$. 

\begin{lemma}\label{Daddfunc} If $\mathsf{F}$ is a strict Cartesian left additive functor, then so is $\overline{\mathcal{D}}[\mathsf{F}]$. 
\end{lemma}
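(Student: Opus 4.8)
The plan is to leverage the work already done. By Lemma \ref{DFunctor}, $\overline{\mathcal{D}}[\mathsf{F}]$ is already known to be a strict Cartesian functor, so the only outstanding obligation is to verify that $\overline{\mathcal{D}}[\mathsf{F}]$ preserves the additive structure of the hom-monoids, namely that $\overline{\mathcal{D}}[\mathsf{F}](f_\bullet + g_\bullet) = \overline{\mathcal{D}}[\mathsf{F}](f_\bullet) + \overline{\mathcal{D}}[\mathsf{F}](g_\bullet)$ and $\overline{\mathcal{D}}[\mathsf{F}](0_\bullet) = 0_\bullet$.

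First I would recall that both the sum and the zero on $\overline{\mathcal{D}}[\mathbb{X}]$ are defined \emph{termwise}, that is $(f_\bullet + g_\bullet)_n = f_n + g_n$ and $(0_\bullet)_n = 0$, and likewise that $\overline{\mathcal{D}}[\mathsf{F}]$ acts termwise by $\overline{\mathcal{D}}[\mathsf{F}](f_\bullet)_n = \mathsf{F}(f_n)$. Since the equality of two pre-$\mathsf{D}$-sequences is checked index by index (the ``internal'' method), both identities reduce to termwise computations in which the hypothesis that $\mathsf{F}$ is additive, i.e. $\mathsf{F}(f+g) = \mathsf{F}(f) + \mathsf{F}(g)$ and $\mathsf{F}(0) = 0$, may be applied directly. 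Concretely, for the sum I would compute
\[
\overline{\mathcal{D}}[\mathsf{F}](f_\bullet + g_\bullet)_n = \mathsf{F}\!\left((f_\bullet + g_\bullet)_n\right) = \mathsf{F}(f_n + g_n) = \mathsf{F}(f_n) + \mathsf{F}(g_n) = \left(\overline{\mathcal{D}}[\mathsf{F}](f_\bullet) + \overline{\mathcal{D}}[\mathsf{F}](g_\bullet)\right)_n,
\]
and for the zero, $\overline{\mathcal{D}}[\mathsf{F}](0_\bullet)_n = \mathsf{F}(0_n) = \mathsf{F}(0) = 0 = (0_\bullet)_n$.

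There is no genuine obstacle here: the entire content is that the additive structure on pre-$\mathsf{D}$-sequences is inherited pointwise from the base category, so that additivity of $\overline{\mathcal{D}}[\mathsf{F}]$ is an immediate consequence of additivity of $\mathsf{F}$ together with the already-established fact (Lemma \ref{DFunctor}) that $\overline{\mathcal{D}}[\mathsf{F}]$ is a strict Cartesian functor. The only thing worth stating carefully is the bookkeeping that the statement ``strict Cartesian left additive functor'' means precisely ``strict Cartesian functor that additionally preserves $+$ and $0$,'' so that citing Lemma \ref{DFunctor} for the Cartesian part and the two displayed computations for the additive part together discharge the full claim.
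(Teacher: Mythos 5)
Your proposal is correct and matches the paper's proof essentially verbatim: it invokes Lemma \ref{DFunctor} for the strict Cartesian part and then verifies preservation of $+$ and $0$ termwise, exactly as the paper does. Nothing is missing.
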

\begin{proof} By Lemma \ref{DFunctor}, we need only show that $\overline{\mathcal{D}}[\mathsf{F}]$ preserves the additive structure -- which follows from the fact that $\mathsf{F}$ does: 
\[\overline{\mathcal{D}}[\mathsf{F}](0_\bullet)_n = \mathsf{F}(0_n) = \mathsf{F}(0)=0 \]
\begin{align*}
\overline{\mathcal{D}}[\mathsf{F}](f_\bullet + g_\bullet)_n &= \mathsf{F}((f_\bullet + g_\bullet)_n)\\
&=~ \mathsf{F}(f_n + g_n)= \mathsf{F}(f_n) + \mathsf{F}(g_n)\\
&=~\overline{\mathcal{D}}[\mathsf{F}](f_\bullet)_n + \overline{\mathcal{D}}[\mathsf{F}](g_\bullet)_n\\
&=~ \left( \overline{\mathcal{D}}[\mathsf{F}](f_\bullet) + \overline{\mathcal{D}}[\mathsf{F}](g_\bullet) \right)_n
\end{align*}
\end{proof} 

Abusing notation, we have that $\overline{\mathcal{D}}$ is a well-defined endofunctor on $\mathsf{CLAC}$. 

\begin{lemma}\label{epsilonadd} For Cartesian left additive categories, $\overline{\varepsilon}$ and $\overline{\delta}$ are both strict Cartesian left additive functors. 
\end{lemma}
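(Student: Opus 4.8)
The plan is to lean on the two lemmas just established: by Lemma \ref{epsilonlemma1} and Lemma \ref{deltalemma1} we already know that $\overline{\varepsilon}$ and $\overline{\delta}$ are strict Cartesian functors, so the only thing left to verify is that each one preserves the additive structure, i.e. sends the zero map to the zero map and sums to sums. Since the additive structure on any category of pre-$\mathsf{D}$-sequences is defined pointwise ($0_\bullet$ has $0_n = 0$, and $(f_\bullet + g_\bullet)_n = f_n + g_n$), both checks will reduce to term-by-term computations.

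First I would dispatch $\overline{\varepsilon}$, which is immediate. Recalling $\overline{\varepsilon}(f_\bullet) = f_0$, the pointwise definition of the additive structure gives $\overline{\varepsilon}(0_\bullet) = 0_0 = 0$ and $\overline{\varepsilon}(f_\bullet + g_\bullet) = (f_\bullet + g_\bullet)_0 = f_0 + g_0 = \overline{\varepsilon}(f_\bullet) + \overline{\varepsilon}(g_\bullet)$, so $\overline{\varepsilon}$ preserves zeros and sums.

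The substance lies in $\overline{\delta}$, whose terms are the iterated differentials $\overline{\delta}(f_\bullet)_n = \mathsf{D}^n[f_\bullet]$ (and $\overline{\delta}(f_\bullet)_0 = f_\bullet$). The key ingredient is Proposition \ref{Diff2} (i) and (ii), which says that $\mathsf{D}$ itself preserves zeros and sums; applying this inductively in $n$ yields $\mathsf{D}^n[0_\bullet] = 0_\bullet$ and $\mathsf{D}^n[f_\bullet + g_\bullet] = \mathsf{D}^n[f_\bullet] + \mathsf{D}^n[g_\bullet]$ for every $n$. Because the zero map of $\overline{\mathcal{D}}\!\left[\overline{\mathcal{D}}[\mathbb{X}]\right]$ is the pre-$\mathsf{D}$-sequence whose every term is the zero map $0_\bullet$ of $\overline{\mathcal{D}}[\mathbb{X}]$, and sums there are computed termwise, these identities give $\overline{\delta}(0_\bullet)_n = 0_\bullet$ and $\overline{\delta}(f_\bullet + g_\bullet)_n = \overline{\delta}(f_\bullet)_n + \overline{\delta}(g_\bullet)_n$ for all $n$, the case $n = 0$ reading off the defining equation $\overline{\delta}(f_\bullet)_0 = f_\bullet$.

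The only mild subtlety — and it is bookkeeping rather than a genuine obstacle — is keeping track of the two layers of additive structure: the sums and zeros inside $\mathsf{D}^n[\,\cdot\,]$ live in $\overline{\mathcal{D}}[\mathbb{X}]$, whereas those of the ambient category $\overline{\mathcal{D}}\!\left[\overline{\mathcal{D}}[\mathbb{X}]\right]$ are the pointwise structure built over them. Once one records that both are ultimately inherited pointwise from $\mathbb{X}$, the two descriptions coincide and the verification is routine.
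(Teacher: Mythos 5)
Your proof is correct and is exactly the intended argument: the paper actually states this lemma without proof, treating it as routine, and your route --- invoking Lemmas \ref{epsilonlemma1} and \ref{deltalemma1} to reduce to preservation of zeros and sums, then verifying $\overline{\varepsilon}$ pointwise at index $0$ and handling $\overline{\delta}$ by iterating Proposition \ref{Diff2} (i)--(ii) to get $\mathsf{D}^n[0_\bullet]=0_\bullet$ and $\mathsf{D}^n[f_\bullet+g_\bullet]=\mathsf{D}^n[f_\bullet]+\mathsf{D}^n[g_\bullet]$ --- is precisely the computation the surrounding lemmas (e.g.\ Lemma \ref{Daddfunc}) carry out in the analogous cases. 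Your closing remark correctly identifies the only subtlety, namely that the additive structure of $\overline{\mathcal{D}}\left[\overline{\mathcal{D}}[\mathbb{X}]\right]$ is the pointwise structure built over that of $\overline{\mathcal{D}}[\mathbb{X}]$, so the termwise identities assemble into the functorial ones.
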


Therefore we obtain a comonad on $\mathsf{CLAC}$: 

\begin{proposition}\label{predcom2} $(\overline{\mathcal{D}}, \overline{\delta}, \overline{\varepsilon})$ is a comonad on $\mathsf{CLAC}$.
\end{proposition}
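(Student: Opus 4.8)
The plan is to observe that essentially all the work has already been done in the preceding lemmas, and that the comonad on $\mathsf{CLAC}$ is nothing more than the restriction of the comonad $(\overline{\mathcal{D}}, \overline{\delta}, \overline{\varepsilon})$ on $\mathsf{CART}$ established in Proposition \ref{predcom1}. The only thing to verify is that each piece of the comonad data restricts appropriately, after which the coherence identities come for free.

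First I would record that $\overline{\mathcal{D}}$ is a well-defined endofunctor on $\mathsf{CLAC}$. On objects this is the Cartesian left additive category $\overline{\mathcal{D}}[\mathbb{X}]$ constructed earlier, and on morphisms it is precisely the content of Lemma \ref{Daddfunc}, which guarantees that $\overline{\mathcal{D}}[\mathsf{F}]$ is again a strict Cartesian left additive functor whenever $\mathsf{F}$ is one; preservation of identity functors and of composition is inherited unchanged from the $\mathsf{CART}$ case treated in Lemma \ref{Dbarfunctor}. This is exactly the abuse of notation noted just before the statement.

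Next I would check that $\overline{\varepsilon}$ and $\overline{\delta}$ are natural transformations internal to $\mathsf{CLAC}$. Their components are strict Cartesian left additive functors by Lemma \ref{epsilonadd}, so they are genuine morphisms of $\mathsf{CLAC}$. Naturality amounts to the commutativity of the very same squares already verified in Lemmas \ref{natepsilon} and \ref{natdelta}; since every morphism of $\mathsf{CLAC}$ is in particular a strict Cartesian functor, those squares commute verbatim, with no recomputation needed.

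Finally, the three comonad axioms are the commutativity of the two counit triangles and the coassociativity square displayed in the proof of Proposition \ref{predcom1}. These are equations between strict Cartesian functors, and the forgetful functor $\mathsf{U}\colon \mathsf{CLAC} \to \mathsf{CART}$ discarding the additive structure is faithful and sends the present data $(\overline{\mathcal{D}}, \overline{\delta}, \overline{\varepsilon})$ to the comonad of Proposition \ref{predcom1}. Since the required identities hold after applying $\mathsf{U}$ and $\mathsf{U}$ is faithful, they already hold in $\mathsf{CLAC}$. There is no genuine obstacle here: the substance lives entirely in Lemmas \ref{Daddfunc} and \ref{epsilonadd}, and the only point worth stating explicitly is the faithfulness of the forgetful functor, which is what licenses transporting the coherence diagrams from $\mathsf{CART}$ down to $\mathsf{CLAC}$ without repeating the diagram chases.
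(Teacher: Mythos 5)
Your proposal is correct and follows essentially the same route as the paper, which obtains Proposition \ref{predcom2} immediately by combining Lemma \ref{Daddfunc} and Lemma \ref{epsilonadd} with the comonad identities already verified on $\mathsf{CART}$ in Proposition \ref{predcom1}. Your explicit appeal to the faithful forgetful functor $\mathsf{CLAC} \to \mathsf{CART}$ is just a slightly more formal phrasing of the paper's implicit observation that the coherence diagrams, being equations between the same underlying functors, need no rechecking.
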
 

\section{$\mathsf{D}$-Sequences and Cofree Cartesian Differential Categories}\label{Dsec}

\subsection{$\mathsf{D}$-Sequences} In this section we introduce $\mathsf{D}$-sequences and use the category of $\mathsf{D}$-sequences to construct the cofree Cartesian differential categories comonad on the category of Cartesian left additive categories. 

\begin{definition} \label{Ddef} For a Cartesian left additive category, a \textbf{$\mathsf{D}$-sequence} is a pre-$\mathsf{D}$-sequence ${f_\bullet}$ such that for each $n \in \mathbb{N}$ the following equalities hold: 
\begin{enumerate}[{\bf [DS.1]}]
\item $\langle 1,0 \rangle \cdot \mathsf{D}^{n+1}[f_\bullet] = 0_\bullet$; 
\item $\left(1 \times (\pi_0 + \pi_1) \right) \cdot \mathsf{D}^{n+1}[f_\bullet] = \left((1 \times \pi_0) \cdot \mathsf{D}^{n+1}[f_\bullet] \right) + \left((1 \times \pi_1) \cdot \mathsf{D}^{n+1}[f_\bullet] \right)$; 
\item $\ell \cdot \mathsf{D}^{n+2}[f_\bullet] = \mathsf{D}^{n+1}[f_\bullet]$;
\item $c \cdot \mathsf{D}^{n+2}[f_\bullet] = \mathsf{D}^{n+2}[f_\bullet]$;
\end{enumerate}
where recall that $\ell: \mathsf{P}(A) \to \mathsf{P}^2(A)$ and $c: \mathsf{P}^2(A) \to \mathsf{P}^2(A)$ (from Definition \ref{cartdiffdef}) are defined respectively as $\ell := \langle 1,0 \rangle \times \langle 0,1 \rangle$ and $c := 1 \times \langle \pi_1, \pi_0 \rangle \times 1$. 
\end{definition}

Before providing some intuition on $\mathsf{D}$-sequences, we provide an equivalent definition which gives a slightly more explicit description of the maps of the sequence themselves: 

\begin{proposition} For a pre-$\mathsf{D}$-sequence $f_\bullet: A \to B$ of a Cartesian left additive category, the following are equivalent: 
\begin{enumerate}[{\em (i)}]
\item $f_\bullet$ is a $\mathsf{D}$-sequence; 
\item For each $n \in \mathbb{N}$ and $k \leq n$, $f_\bullet$ satisfies the following equalities: 
\begin{enumerate}[{\bf [DS.$1^\prime$]}]
\item $\mathsf{P}^k(\langle 1, 0 \rangle) f_{n+1} = 0$; 
\item $\mathsf{P}^k\left(1 \times (\pi_0 + \pi_1) \right) f_{n+1} = \mathsf{P}^k(1 \times \pi_0) f_{n+1} + \mathsf{P}^k(1 \times \pi_1) f_{n+1}$;
\item $\mathsf{P}^k(\ell) f_{n+2} =   f_{n+1}$ with $\ell: \mathsf{P}^{n-k+1}(A) \to \mathsf{P}^{n-k+1}(A)$;
\item $\mathsf{P}^k(c) f_{n+2} =   f_{n+2}$ with $c: \mathsf{P}^{n-k+2}(A) \to \mathsf{P}^{n-k+2}(A)$. 
\end{enumerate}
\end{enumerate}
\end{proposition}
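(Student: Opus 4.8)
The plan is to prove the equivalence purely by unpacking the sequence-level conditions \textbf{[DS.1]}--\textbf{[DS.4]} componentwise and then reindexing. The two ingredients I need are the definition of scalar multiplication, $(h \cdot g_\bullet)_k = \mathsf{P}^k(h)\, g_k$, and the formula for the iterated differential, $\mathsf{D}^m[f_\bullet]_k = f_{m+k}$, which follows from Definition \ref{tandiffseq} (ii) by iteration (and is already recorded in the proof of Lemma \ref{natdelta}). Since an equation of pre-$\mathsf{D}$-sequences $g_\bullet = h_\bullet$ means $g_k = h_k$ for every $k$, each condition \textbf{[DS.$i$]} (asserted for all $n$) is nothing but a doubly-indexed family of equations in $\mathbb{X}$, one for each pair $(n,k)$, and the goal is to recognize this family as the one defining \textbf{[DS.$i'$]}.

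First I would evaluate the $k$-th component of each axiom. For \textbf{[DS.1]} the left side is $(\langle 1,0\rangle \cdot \mathsf{D}^{n+1}[f_\bullet])_k = \mathsf{P}^k(\langle 1,0\rangle)\, \mathsf{D}^{n+1}[f_\bullet]_k = \mathsf{P}^k(\langle 1,0\rangle)\, f_{n+1+k}$, while the right side is $(0_\bullet)_k = 0$; hence \textbf{[DS.1]} for all $n$ says exactly that $\mathsf{P}^k(\langle 1,0\rangle)\, f_{n+1+k} = 0$ for all $n,k \geq 0$. The computations for the remaining axioms are identical in shape: using in addition that sums of pre-$\mathsf{D}$-sequences are computed componentwise, \textbf{[DS.2]} becomes $\mathsf{P}^k(1 \times (\pi_0+\pi_1))\, f_{n+1+k} = \mathsf{P}^k(1 \times \pi_0)\, f_{n+1+k} + \mathsf{P}^k(1 \times \pi_1)\, f_{n+1+k}$, whereas \textbf{[DS.3]} and \textbf{[DS.4]} become $\mathsf{P}^k(\ell)\, f_{n+2+k} = f_{n+1+k}$ and $\mathsf{P}^k(c)\, f_{n+2+k} = f_{n+2+k}$, each again for every $n,k \geq 0$.

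The last step is the reindexing $N := n+k$. The assignment $(n,k) \mapsto (n+k,\, k)$ is a bijection from $\{(n,k) : n,k \geq 0\}$ onto $\{(N,k) : N \geq k \geq 0\}$, with inverse $(N,k) \mapsto (N-k,\, k)$. Substituting $n = N-k$ carries each displayed equation above verbatim into the corresponding condition \textbf{[DS.$i'$]} indexed by $N$ and $k \leq N$; for instance $\mathsf{P}^k(\langle 1,0\rangle)\, f_{n+1+k} = 0$ turns into $\mathsf{P}^k(\langle 1,0\rangle)\, f_{N+1} = 0$. Because the reindexing is a bijection of index sets and each individual equation is mapped to its counterpart, both implications (i) $\Rightarrow$ (ii) and (ii) $\Rightarrow$ (i) follow simultaneously, establishing the equivalence.

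I do not expect any genuine obstacle; the only point requiring care is the bookkeeping of object levels, namely checking that after applying $\mathsf{P}^k$ the scalar maps $\langle 1,0\rangle$, $1 \times (\pi_0 + \pi_1)$, $\ell$, and $c$ appearing in \textbf{[DS.$i$]} land in exactly the power of $\mathsf{P}$ needed to precompose $f_{n+1}$ or $f_{n+2}$. This is precisely the type information recorded alongside \textbf{[DS.$3'$]} and \textbf{[DS.$4'$]} in the statement, and it is routine to verify once the types are written out explicitly.
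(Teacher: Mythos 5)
Your proof is correct and takes essentially the same route as the paper's: both arguments rest on the componentwise identities $(h \cdot g_\bullet)_k = \mathsf{P}^k(h)\, g_k$ and $\mathsf{D}^m[f_\bullet]_k = f_{m+k}$ together with the index shift between $n$ and $n+k$. The only difference is presentational: you package the two implications as a single bijective reindexing $(n,k) \mapsto (n+k,k)$ of the doubly-indexed family of equations, whereas the paper carries out the two directions separately, instantiating \textbf{[DS.$j$]} at $n-k$ for one direction and \textbf{[DS.$j^\prime$]} at $(n+k,k)$ for the other.
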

\begin{proof} In both directions, we use the trick that $\mathsf{D}^n[f]_m = f_{n+m}$. \\Ê\\
$(i) \Rightarrow (ii)$: Suppose that $f_\bullet$ is a $\mathsf{D}$-sequence. For the following, let $k \leq n$: 
 \begin{enumerate}[{\bf [DS.$1^\prime$]}]
\item Here we use {\bf [DS.1]} at $n-k$:
\[\mathsf{P}^k(\langle 1, 0 \rangle) f_{n+1} = \mathsf{P}^k(\langle 1, 0 \rangle) \mathsf{D}^{n-k+1}[f_\bullet]_k = \left( \langle 1, 0 \rangle \cdot \mathsf{D}^{n-k+1}[f_\bullet] \right)_k = (0_\bullet)_k = 0 \]
\item  Here we use {\bf [DS.2]} at $n-k$:
\begin{align*}
\mathsf{P}^k(\left(1 \times (\pi_0 + \pi_1)  \right)) f_{n+1} &=~ \mathsf{P}^k\left(1 \times (\pi_0 + \pi_1)  \right) \mathsf{D}^{n-k+1}[f_\bullet]_k \\
&=~ \left( \left(1 \times (\pi_0 + \pi_1)  \right) \cdot \mathsf{D}^{n-k+1}[f_\bullet] \right)_k \\
&=~ \left(\left((1 \times \pi_0) \cdot \mathsf{D}^{n-k+1}[f_\bullet] \right) + \left((1 \times \pi_1) \cdot \mathsf{D}^{n-k+1}[f_\bullet] \right) \right)_k \\
&=~ \left((1 \times \pi_0) \cdot \mathsf{D}^{n-k+1}[f_\bullet] \right)_k  + \left((1 \times \pi_1) \cdot \mathsf{D}^{n-k+1}[f_\bullet] \right)_k \\
&=~ \mathsf{P}^k(1 \times \pi_0) \mathsf{D}^{n-k+1}[f_\bullet]_k + \mathsf{P}^k(1 \times \pi_1) \mathsf{D}^{n-k+1}[f_\bullet]_k \\
&=~ \mathsf{P}^k(1 \times \pi_0) f_{n+1} + \mathsf{P}^k(1 \times \pi_1) f_{n+1}
\end{align*}
\item Here we use {\bf [DS.3]} at $n-k$:
\[\mathsf{P}^k(\ell) f_{n+2} = \mathsf{P}^k(\ell) \mathsf{D}^{n-k+2}[f_\bullet]_k = \left( \ell \cdot \mathsf{D}^{n-k+2}[f_\bullet] \right)_k = \mathsf{D}^{n-k+1}[f_\bullet]_k =  f_{n+1}\] 
\item Here we use {\bf [DS.4]} at $n-k$:
\[\mathsf{P}^k(c) f_{n+2} = \mathsf{P}^k(c) \mathsf{D}^{n-k+2}[f_\bullet]_k = \left( c \cdot \mathsf{D}^{n-k+2}[f_\bullet] \right)_k = \mathsf{D}^{n-k+2}[f_\bullet]_k =  f_{n+2}\] 
\end{enumerate}
$(ii) \Rightarrow (i)$: Suppose that $f_\bullet$ satisfies {\bf [DS.$1^\prime$]} to {\bf [DS.$4^\prime$]} for each $n \in \mathbb{N}$ and $k \leq n$. 
\begin{enumerate}[{\bf [DS.1]}]
\item Here we use {\bf [DS.$1^\prime$]} with $k \leq n+k$: 
\[(\langle 1, 0 \rangle \cdot \mathsf{D}^{n+1}[f_\bullet])_k = \mathsf{P}^k(\langle 1, 0 \rangle) \mathsf{D}^{n+1}[f_\bullet]_k = \mathsf{P}^k(\langle 1, 0 \rangle) f_{n+k+1} = 0 = 0_k\]
\item Here we use {\bf [DS.$2^\prime$]} with $k \leq n+k$: 
\begin{align*}
(\left(1 \times (\pi_0 + \pi_1)  \right) \cdot \mathsf{D}^{n+1}[f_\bullet])_k &=~ \mathsf{P}^k\left(1 \times (\pi_0 + \pi_1)  \right) \mathsf{D}^{n+1}[f_\bullet]_k\\
&=~ \mathsf{P}^k\left(1 \times (\pi_0 + \pi_1)  \right) f_{n+k+1} \\
&=~Ê\mathsf{P}^k(1 \times \pi_0) f_{n+k+1} + \mathsf{P}^k(1 \times \pi_1) f_{n+k+1} \\
&=~Ê\mathsf{P}^k(1 \times \pi_0) \mathsf{D}^{n+1}[f_\bullet]_k + \mathsf{P}^k(1 \times \pi_1)\mathsf{D}^{n+1}[f_\bullet]_k \\
&=~Ê\left((1 \times \pi_0) \cdot \mathsf{D}^{n+1}[f_\bullet] \right)_k + \left((1 \times \pi_1) \cdot \mathsf{D}^{n+1}[f_\bullet] \right)_k \\
&=~ \left( \left((1 \times \pi_0) \cdot \mathsf{D}^{n+1}[f_\bullet] \right) + \left((1 \times \pi_1) \cdot \mathsf{D}^{n+1}[f_\bullet] \right) \right)_k 
\end{align*}
\item Here we use {\bf [DS.$3^\prime$]} with $k \leq n+k$: 
\[(\ell \cdot \mathsf{D}^{n+2}[f_\bullet])_k = \mathsf{P}^k(\ell) \mathsf{D}^{n+2}[f_\bullet]_k = \mathsf{P}^k(\ell) f_{n+k+2} = f_{n+k+1} = \mathsf{D}^{n+1}[f_\bullet]_k\]
\item Here we use {\bf [DS.$4^\prime$]} with $k \leq n+k$: 
\[(c \cdot \mathsf{D}^{n+2}[f_\bullet])_k = \mathsf{P}^k(c) \mathsf{D}^{n+2}[f_\bullet]_k = \mathsf{P}^k(c) f_{n+k+2} = f_{n+k+2} = \mathsf{D}^{n+2}[f_\bullet]_k\]
\end{enumerate}
\end{proof}

Now to provide some explanation on the axioms of $\mathsf{D}$-sequences. The axioms {\bf [DS.$1$]} to {\bf [DS.$4$]} are analogues of higher-order versions of \textbf{[CD.2]}, \textbf{[CD.6]}, and \textbf{[CD.7]} -- which are respectively:  
\begin{description}
\item[\textbf{[CD.2.a]} ] $(\left(1 \times (\pi_0 + \pi_1)  \right) \mathsf{D}^{n+1}[f] = (1 \times \pi_0)\mathsf{D}^{n+1}[f] + (1 \times \pi_1)\mathsf{D}^{n+1}[f]$ 
\item[\textbf{[CD.2.b]} ] $\langle 1, 0 \rangle \mathsf{D}^{n+1}[f]=0$ 
\item[\textbf{[CD.6]} ] $\ell \mathsf{D}^{n+2}[f] = \mathsf{D}^{n+1}[f]$
\item[\textbf{[CD.7]} ] $c \mathsf{D}^{n+2}[f] = \mathsf{D}^{n+2}[f]$
\end{description}
As for {\bf [DS.$1^\prime$]} to {\bf [DS.$4^\prime$]}, recall that one should think of pre-$\mathsf{D}$-sequences as $(f, \mathsf{D}[f], \mathsf{D}^2[f], \hdots)$. By the higher-order chain rule (\ref{highchainrule}), there are in fact $k \leq n$ possible equalities of the order $n$ versions of \textbf{[CD.2]}, \textbf{[CD.6]}, and \textbf{[CD.7]}. For example, consider \textbf{[CD.6]}: 
\begin{align*}
\mathsf{T}^k(\ell) \mathsf{D}^{n+2}[f] = \mathsf{T}^k(\ell) \mathsf{D}^k\left[\mathsf{D}^2\left[\mathsf{D}^{n-k}[f] \right] \right] = \mathsf{D}^k\left[ \ell \mathsf{D}^2\left[\mathsf{D}^{n-k}[f] \right] \right] = \mathsf{D}^k\left[ \mathsf{D}\left[\mathsf{D}^{n-k}[f] \right] \right] = \mathsf{D}^{n+1}[f]
\end{align*}
and since $\ell$ is linear, $\mathsf{T}(\ell)=\ell \times \ell = \mathsf{P}(\ell)$, and we obtain \textbf{[DS.$3^\prime$]} that $\mathsf{P}^k(\ell) f_{n+2} =   f_{n+1}$. The rest are obtained in similar fashions and are derived in the proof of Lemma \ref{omegalemma}. 

To compare with the Fa\`a di Bruno construction \cite{cockett2011faa}: the requirements of sequences for the Fa\`a di Bruno construction were multi-additivity and symmetry in the last $n$-arguments. 
Here \textbf{[DS.$1^\prime$]} and \textbf{[DS.$2^\prime$]} correspond to the multi-additivity in certain arguments, and \textbf{[DS.$4^\prime$]} is symmetry in those same arguments. The major difference between $\mathsf{D}$-sequences and sequences of the Fa\`a di Bruno construction is \textbf{[DS.$3^\prime$]}. For the Fa\`a di Bruno construction, there was no necessary connection between $f_{n+1}$ and $f_{n+2}$ for arbitrary sequences, while for a $\mathsf{D}$-sequence we ask that there be a relation between the $f_n$. This extra requirement shouldn't be surprising as we are working with the full derivative which involves differentiating the linear argument of $\mathsf{D}[f]$, instead of only partial derivatives. Thus an added requirement explaining this phenomena was to be expected. In summary: in exchange for a simpler composition, we require an added axiom. 

There is a bit of work to do in order to show that the category of $\mathsf{D}$-sequences is well defined: in particular proving that the composite of $\mathsf{D}$-sequences is again a $\mathsf{D}$-sequence. We first observe the following (which we leave to the reader to check for themselves as they are all straightforward): 

\begin{lemma}\label{Dlemma1} For a Cartesian left additive category: 
\begin{enumerate}[{\em (i)}]
\item $0_\bullet$ is a $\mathsf{D}$-sequence; 
\item $i_\bullet$ is a $\mathsf{D}$-sequence; 
\item If $f_\bullet$ is a $\mathsf{D}$-sequence and $h$ is additive then $h \cdot f_\bullet$ is a $\mathsf{D}$-sequence;
\item If $f_\bullet$ is a $\mathsf{D}$-sequence and $k$ is additive then $f_\bullet \cdot k$ is a $\mathsf{D}$-sequence;
\item $i_\bullet \cdot \pi_i$ is a $\mathsf{D}$-sequence; 
\item If $f_\bullet$ and $g_\bullet$ are $\mathsf{D}$-sequences then $\langle f_\bullet, g_\bullet \rangle$ is a $\mathsf{D}$-sequence; 
\item If $f_\bullet$ and $g_\bullet$ are $\mathsf{D}$-sequences then $f_\bullet + g_\bullet$ is a $\mathsf{D}$-sequence. 
\end{enumerate}
\end{lemma}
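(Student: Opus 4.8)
The plan is to verify the four defining equalities \textbf{[DS.1]}--\textbf{[DS.4]} for each of the seven sequences, working \emph{externally} through the scalar-multiplication and additive identities rather than unwinding components. The uniform mechanism is that $\mathsf{D}$, and hence each iterate $\mathsf{D}^n$, interacts predictably with every construction involved: $\mathsf{D}^n[0_\bullet]=0_\bullet$ (Proposition \ref{Diff2} (i)); $\mathsf{D}^n[h\cdot f_\bullet]=\mathsf{P}^n(h)\cdot \mathsf{D}^n[f_\bullet]$ and $\mathsf{D}^n[f_\bullet\cdot k]=\mathsf{D}^n[f_\bullet]\cdot k$ (iterating Lemma \ref{Tprop1} (v),(vi)); $\mathsf{D}^n[\langle f_\bullet,g_\bullet\rangle]=\langle \mathsf{D}^n[f_\bullet],\mathsf{D}^n[g_\bullet]\rangle$ (Proposition \ref{Diff1} (v)); and $\mathsf{D}^n[f_\bullet+g_\bullet]=\mathsf{D}^n[f_\bullet]+\mathsf{D}^n[g_\bullet]$ (Proposition \ref{Diff2} (ii)). For each axiom I would substitute the relevant formula into the left-hand side and then commute the fixed structure map $\langle 1,0\rangle$, $1\times(\pi_0+\pi_1)$, $\ell$, or $c$ past the construction, reducing the claim to the already-assumed axiom(s) for the input sequence(s).

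The closure statements (i), (vi), (vii) then fall out quickly. For $0_\bullet$ every $\mathsf{D}^{n+j}[0_\bullet]$ is $0_\bullet$, so each axiom degenerates to an identity among zero sequences via Lemma \ref{astprop3} (i) and Lemma \ref{claclemma} (i). For the pairing $\langle f_\bullet,g_\bullet\rangle$, Lemma \ref{astprop2} (i) distributes the structure map over the pairing coordinatewise, after which \textbf{[DS.1]}--\textbf{[DS.4]} for $f_\bullet$ and $g_\bullet$ close each coordinate (using $\langle 0_\bullet,0_\bullet\rangle=0_\bullet$ from Lemma \ref{claclemma} (i) for \textbf{[DS.1]}). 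For $f_\bullet+g_\bullet$ I would push the structure map inside the sum with Lemma \ref{astprop3} (iv), apply the axioms to $f_\bullet$ and $g_\bullet$ separately, and reassemble using commutativity of the hom-monoid; only \textbf{[DS.2]} requires a harmless reordering of four summands.

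Additivity enters in the two scalar-multiplication cases. For $f_\bullet\cdot k$, right multiplication by $k$ commutes with the left structure scalars by Lemma \ref{scalarlem} (iv), so \textbf{[DS.3]} and \textbf{[DS.4]} hold for any $k$; for \textbf{[DS.1]} and \textbf{[DS.2]} I would instead invoke $0_\bullet\cdot k=0_\bullet$ and the right-distributivity $(a_\bullet+b_\bullet)\cdot k=(a_\bullet\cdot k)+(b_\bullet\cdot k)$ (Lemma \ref{astprop3} (v),(vi)), both needing $k$ additive. For $h\cdot f_\bullet$ the crucial input is that $\langle 1,0\rangle$, $1\times(\pi_0+\pi_1)$ and $\ell$, while not natural in general, \emph{are} natural for additive maps (as observed in Section \ref{CDCsec}), whereas $c$ is natural for all maps. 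Since $\mathsf{P}^n(h)$ is additive whenever $h$ is (Lemma \ref{claclemma} (iv)), the naturality squares give identities such as $\langle 1,0\rangle\,\mathsf{P}^{n+1}(h)=\mathsf{P}^n(h)\,\langle 1,0\rangle$; after Lemma \ref{scalarlem} (i) the structure map slides past the appropriate power of $h$, the axiom for $f_\bullet$ applies, and the outcome is reassembled (as $0_\bullet$ via Lemma \ref{astprop3} (i) for \textbf{[DS.1]}, and as a derivative of $h\cdot f_\bullet$ for the remaining ones).

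The main obstacle, though still routine, is the identity sequence (ii), since there is no input axiom to lean on. First I would show by induction, from Proposition \ref{Diff1} (ii) and Lemma \ref{Tprop1} (vi), that $\mathsf{D}^n[i_\bullet]=i_\bullet\cdot(\underbrace{\pi_1\cdots\pi_1}_{n})$. Using $h\cdot i_\bullet=i_\bullet\cdot h$ (Lemma \ref{astprop1} (i)) together with the associativities of Lemma \ref{scalarlem}, each axiom reduces to a base-level identity between a structure map and iterated projections: $\langle 1,0\rangle\pi_1=0$ yields \textbf{[DS.1]}; the splitting $(1\times(\pi_0+\pi_1))\pi_1=\pi_1\pi_0+\pi_1\pi_1$ coming from left-additivity, together with Lemma \ref{astprop3} (iii), yields \textbf{[DS.2]}; $\ell$ followed by the two outer $\pi_1$'s collapsing to a single $\pi_1$ yields \textbf{[DS.3]}; and $c$ fixing the last coordinate (so that $c\,\pi_1\pi_1=\pi_1\pi_1$) yields \textbf{[DS.4]}. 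Finally I would not prove (v) separately: as the projections $\pi_i$ are additive, $i_\bullet\cdot\pi_i$ is a $\mathsf{D}$-sequence by combining (ii) with (iv).
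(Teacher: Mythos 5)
Your proposal is correct, and there is strictly speaking nothing to compare it against: the paper gives no proof of Lemma \ref{Dlemma1}, explicitly leaving all seven items ``to the reader to check for themselves as they are all straightforward.'' Your external method is precisely the intended one and matches the style of the paper's neighbouring proofs (Proposition \ref{Dprop1}, Lemma \ref{Dcomp}, Lemma \ref{deltalemma3}): iterate $\mathsf{D}$ through each construction via Lemma \ref{Tprop1} (v)--(vi), Proposition \ref{Diff1} (v), and Proposition \ref{Diff2}, then slide $\langle 1,0\rangle$, $1 \times (\pi_0+\pi_1)$, $\ell$ past $\mathsf{P}^n(h)$ using their naturality for additive maps ($c$ being natural for all maps), and your key identity $\mathsf{D}^n[i_\bullet] = i_\bullet \cdot (\pi_1 \cdots \pi_1)$ is exactly the one the paper itself derives inside the proof of Lemma \ref{deltalemma1}. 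Your refinements are also sound: additivity of $k$ is indeed needed only for \textbf{[DS.1]}--\textbf{[DS.2]} in item (iv) (since \textbf{[DS.3]}--\textbf{[DS.4]} use only Lemma \ref{scalarlem} (iv)), and deducing (v) from (ii) and (iv) via additivity of the projections is exactly right.
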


Next we show that $\mathsf{D}$-sequences are closed under the differential and tangent.

\begin{proposition}\label{Dprop1}  If $f_\bullet$ is a $\mathsf{D}$-sequence then:
\begin{enumerate}[{\em (i)}]
\item $\mathsf{D}[f_\bullet]$ is a $\mathsf{D}$-sequence; 
\item $\mathsf{T}(f_\bullet)$ is a $\mathsf{D}$-sequence. 
\end{enumerate}
and also the following equalities hold: 
\begin{enumerate}[{\em (i)}]
\setcounter{enumi}{2}
\item $\langle 1, 0 \rangle \cdot \mathsf{T}(f_\bullet) = f_\bullet \cdot \langle 1, 0 \rangle$; 
\item $(1 \times \pi_i) \cdot \mathsf{T}(f_\bullet) =  \mathsf{T}_2(f_\bullet) \cdot (1 \times \pi_i)$ where $\mathsf{T}_2(f_\bullet) := \left \langle \pi_0 \cdot f, \left \langle (1 \times \pi_0) \cdot \mathsf{D}[f_\bullet], (1 \times \pi_1) \cdot \mathsf{D}[f_\bullet] \right \rangle \right \rangle $; 
\item $\left (1 \times (\pi_0 + \pi_1) \right)  \cdot  \mathsf{T}(f_\bullet) = \mathsf{T}_2(f_\bullet) \cdot \left (1 \times (\pi_0 + \pi_1) \right) $;
\item $\ell \cdot \mathsf{T}^{2}(f_\bullet) =  \mathsf{T}(f_\bullet) \cdot \ell$;
\item $c \cdot  \mathsf{T}^{2}(f_\bullet) =  \mathsf{T}^{2}(f_\bullet) \cdot c$. 
\end{enumerate}
\end{proposition}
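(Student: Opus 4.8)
The plan is to dispatch the two closure statements (i) and (ii) first, and then treat the five naturality identities (iii)--(vii) ``externally,'' i.e.\ entirely through the pre-$\mathsf{D}$-sequence identities already established rather than term-by-term. For (i) I would use the shift identity $\mathsf{D}^m[\mathsf{D}[f_\bullet]] = \mathsf{D}^{m+1}[f_\bullet]$ (immediate from $\mathsf{D}[f_\bullet]_n = f_{n+1}$): under it, each of \textbf{[DS.1]}--\textbf{[DS.4]} for $\mathsf{D}[f_\bullet]$ at index $n$ is literally the same axiom for $f_\bullet$ at index $n+1$, so $\mathsf{D}[f_\bullet]$ is a $\mathsf{D}$-sequence. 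For (ii) I would invoke Proposition \ref{Diff1}(i) to write $\mathsf{T}(f_\bullet) = \langle \pi_0 \cdot f_\bullet, \mathsf{D}[f_\bullet]\rangle$; since $\pi_0$ is additive, $\pi_0 \cdot f_\bullet$ is a $\mathsf{D}$-sequence by Lemma \ref{Dlemma1}(iii), $\mathsf{D}[f_\bullet]$ is one by part (i), and their pairing is one by Lemma \ref{Dlemma1}(vi).

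The common tool for the naturality identities is again $\mathsf{T}(f_\bullet) = \langle \pi_0 \cdot f_\bullet, \mathsf{D}[f_\bullet]\rangle$. Identities (iii), (iv), (v) need only this together with Lemma \ref{astprop2}(i) (left scalars distribute over pairings) and Lemma \ref{scalarlem} (scalars compose). For (iii) I would compute $\langle 1,0\rangle \cdot \mathsf{T}(f_\bullet) = \langle (\langle 1,0\rangle\pi_0)\cdot f_\bullet,\ \langle 1,0\rangle\cdot\mathsf{D}[f_\bullet]\rangle = \langle f_\bullet, 0_\bullet\rangle$, using $\langle 1,0\rangle\pi_0 = 1$ and \textbf{[DS.1]} at $n=0$ to kill the second entry; this equals $f_\bullet\cdot\langle 1,0\rangle$ by Lemma \ref{astprop3}(vii). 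Identity (iv) is purely structural: expanding the definition of $\mathsf{T}_2(f_\bullet)$ and using Lemma \ref{astprop2} and Lemma \ref{scalarlem} together with $(1\times\pi_i)\pi_0=\pi_0$, both sides reduce to $\langle \pi_0\cdot f_\bullet,\ (1\times\pi_i)\cdot\mathsf{D}[f_\bullet]\rangle$, and no \textbf{[DS]} axiom is required. Identity (v) runs identically, except that the entry $(1\times(\pi_0+\pi_1))\cdot\mathsf{D}[f_\bullet]$ is rewritten by \textbf{[DS.2]} at $n=0$ and matched against $\mathsf{T}_2(f_\bullet)\cdot(1\times(\pi_0+\pi_1))$ via Lemma \ref{astprop3}(viii).

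The harder identities are (vi) and (vii), which involve $\mathsf{T}^2$. The key preliminary step is to expand, using Proposition \ref{Diff1}(i) twice together with Proposition \ref{Diff1}(v) and Lemma \ref{Tprop1}(v), to obtain
\[
\mathsf{T}^2(f_\bullet) = \langle \langle (\pi_0\pi_0)\cdot f_\bullet, \pi_0\cdot\mathsf{D}[f_\bullet]\rangle, \langle (\pi_0\times\pi_0)\cdot\mathsf{D}[f_\bullet], \mathsf{D}^2[f_\bullet]\rangle\rangle.
\]
For (vi) I would first use Lemma \ref{astprop3}(ix) to get $\mathsf{T}(f_\bullet)\cdot\ell = \langle\langle \pi_0\cdot f_\bullet, 0_\bullet\rangle, \langle 0_\bullet, \mathsf{D}[f_\bullet]\rangle\rangle$, then push $\ell$ through $\mathsf{T}^2(f_\bullet)$ on the left via Lemma \ref{astprop2}(i) and Lemma \ref{scalarlem}(i), simplifying the scalar composites $\ell\pi_0 = \ell(\pi_0\times\pi_0) = \pi_0\langle 1,0\rangle$ and $\ell\pi_0\pi_0 = \pi_0$; the four entries then match after \textbf{[DS.1]} (at $n=0$) annihilates the two off-diagonal entries and \textbf{[DS.3]} (at $n=0$, i.e.\ $\ell\cdot\mathsf{D}^2[f_\bullet]=\mathsf{D}[f_\bullet]$) handles the last. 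For (vii) I would compute $\mathsf{T}^2(f_\bullet)\cdot c$ by Lemma \ref{astprop2}(viii) (which swaps the two inner off-diagonal entries) and push $c$ through on the left with Lemma \ref{astprop2}(i) and Lemma \ref{scalarlem}(i); here the composites $c\pi_0=\pi_0\times\pi_0$, $c(\pi_0\times\pi_0)=\pi_0$ and $c\pi_0\pi_0=\pi_0\pi_0$ make three of the four entries match structurally, leaving only the last entry, which requires \textbf{[DS.4]} (at $n=0$).

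The main obstacle I expect is the $\mathsf{T}^2$ bookkeeping in (vi) and (vii): one must keep straight the two distinct projections $\pi_0\colon\mathsf{P}(A)\to A$ and $\pi_0\colon\mathsf{P}^2(A)\to\mathsf{P}(A)$ and verify each scalar composite (such as $\ell\pi_0$ or $c(\pi_0\times\pi_0)$) carefully, since an off-by-one in the $\mathsf{P}$-power would invalidate the whole matching. A mildly surprising point worth flagging is that (vi) needs \textbf{[DS.1]} in addition to \textbf{[DS.3]}: the off-diagonal entries of $\ell\cdot\mathsf{T}^2(f_\bullet)$ take the form $\pi_0\cdot(\langle 1,0\rangle\cdot\mathsf{D}[f_\bullet])$, which vanish precisely because $f_\bullet$ is a $\mathsf{D}$-sequence. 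As an alternative, all of (iii)--(vii) can instead be verified internally on $n$-th terms using \textbf{[DS.$1'$]}--\textbf{[DS.$4'$]} with $k=n$, but the external route above avoids re-deriving the $\mathsf{P}$-power bookkeeping at every index.
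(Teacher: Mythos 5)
Your proposal is correct and takes essentially the same route as the paper's proof: (i) by the index shift, (ii) via Proposition \ref{Diff1} (i) together with Lemma \ref{Dlemma1} (iii) and (vi), and (iii)--(vii) externally, expanding $\mathsf{T}(f_\bullet)$ and $\mathsf{T}^2(f_\bullet)$ as pairings, pushing scalars through with Lemmas \ref{scalarlem}, \ref{astprop2}, and \ref{astprop3}, and invoking \textbf{[DS.1]}--\textbf{[DS.4]} at $n=0$ exactly where the paper does. Your flag that (vi) also requires \textbf{[DS.1]} is accurate: the paper's computation does use $\langle 1,0 \rangle \cdot \mathsf{D}[f_\bullet] = 0_\bullet$ to kill the off-diagonal entries, even though its citation line for (vi) names only \textbf{[DS.3]}.
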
 
\begin{proof} Suppose that $f_\bullet$ is a $\mathsf{D}$-sequence: 
\begin{enumerate}[{\em (i)}]
\item Automatic by the definition of a $\mathsf{D}$-sequence.  
\item By Lemma \ref{Dlemma1} (iii), since $\pi_0$ is additive, $\pi_0 \cdot f_\bullet$ is a $\mathsf{D}$-sequence. By (i) and Lemma \ref{Dlemma1} (vi), it then follows that $\langle \pi_0 \cdot f_\bullet, \mathsf{D}[f_\bullet] \rangle$ is also a $\mathsf{D}$-sequence. Therefore since $\mathsf{T}(f_\bullet) = \langle \pi_0 \cdot f_\bullet, \mathsf{D}[f_\bullet] \rangle$ (Proposition \ref{Diff1} (i)), $\mathsf{T}(f_\bullet)$ is a $\mathsf{D}$-sequence. 
\item Here we use Lemma \ref{astprop2} (i), Lemma \ref{astprop3} (vii), and \textbf{[DS.1]} at $n=0$: 
\begin{align*}
\langle 1, 0 \rangle \cdot \mathsf{T}(f_\bullet) = \langle 1, 0 \rangle \cdot \langle \pi_0 \cdot f_\bullet, \mathsf{D}[f_\bullet] \rangle = \left \langle \left (\langle 1, 0 \rangle\pi_0 \right) \cdot f_\bullet, \langle 1, 0 \rangle \cdot \mathsf{D}[f_\bullet] \right \rangle = \langle f_\bullet , 0_\bullet \rangle =  f_\bullet \cdot \langle 1, 0 \rangle
\end{align*}
\item Here we use Lemma \ref{astprop2} (i) and (vi):    
\begin{align*}
\mathsf{T}_2(f_\bullet) \cdot (1 \times \pi_i) &=~ \left \langle \pi_0 \cdot f, \left \langle (1 \times \pi_0) \cdot \mathsf{D}[f_\bullet], (1 \times \pi_1) \cdot \mathsf{D}[f_\bullet] \right \rangle \right \rangle  \cdot (1 \times \pi_i) \\
&=~ \left \langle \pi_0 \cdot f_\bullet, \left \langle (1 \times \pi_0) \cdot \mathsf{D}[f_\bullet], (1 \times \pi_1) \cdot \mathsf{D}[f_\bullet] \right \rangle \cdot \pi_i  \right \rangle \\
&=~ \langle \pi_0 \cdot f_\bullet, (1 \times \pi_i) \cdot \mathsf{D}[f_\bullet] \rangle \\
&=~ (1 \times \pi_i) \cdot \langle \pi_0 \cdot f_\bullet, \mathsf{D}[f_\bullet] \rangle \\
&=~ (1 \times \pi_i) \cdot  \mathsf{T}(f_\bullet)  
\end{align*}
\item Here we use Lemma \ref{astprop2} (iii), Lemma \ref{astprop3} (viii), and \textbf{[DS.2]} at $n=0$:
\begin{align*}
(1 \times (\pi_0 + \pi_1))  \cdot  \mathsf{T}(f_\bullet) &=~Ê(1 \times (\pi_0 + \pi_1))  \cdot \langle \pi_0 \cdot f_\bullet, \mathsf{D}[f_\bullet] \rangle \\
&=~ \left \langle \left((1 \times (\pi_0 + \pi_1))  \pi_0 \right) \cdot f_\bullet, (1 \times (\pi_0 + \pi_1))  \cdot \mathsf{D}[f_\bullet] \right \rangle \\
&=~ \left \langle \pi_0 \cdot f_\bullet,  \left((1 \times \pi_0) \cdot \mathsf{D}[f_\bullet] \right) + \left((1 \times \pi_1) \cdot \mathsf{D}[f_\bullet] \right) \right \rangle \\
&=~ \left \langle \pi_0 \cdot f, \left \langle (1 \times \pi_0) \cdot \mathsf{D}[f_\bullet], (1 \times \pi_1) \cdot \mathsf{D}[f_\bullet] \right \rangle \right \rangle \cdot \left (1 \times (\pi_0 + \pi_1) \right)  \\
&=~ \mathsf{T}_2(f_\bullet) \cdot \left (1 \times (\pi_0 + \pi_1) \right) 
\end{align*}
\item Here we use Lemma \ref{astprop2} (i), Lemma \ref{astprop3} (ix), and \textbf{[DS.3]} at $n=0$:
\begin{align*}
\ell \cdot \mathsf{T}^2(f_\bullet) &=~Ê\ell \cdot \left \langle \pi_0 \cdot \mathsf{T}(f_\bullet), \mathsf{D}[\mathsf{T}(f_\bullet)] \right \rangle \\
&=~Ê\ell \cdot \left \langle \pi_0 \cdot \langle \pi_0 \cdot f_\bullet, \mathsf{D}[f_\bullet] \rangle, \mathsf{D}[\langle \pi_0 \cdot f_\bullet, \mathsf{D}[f_\bullet] \rangle] \right \rangle \\
&=~Ê\ell \cdot \left \langle \pi_0 \cdot \langle \pi_0 \cdot f_\bullet, \mathsf{D}[f_\bullet] \rangle, \langle \mathsf{D}[\pi_0 \cdot f_\bullet], \mathsf{D}^2[f_\bullet] \rangle \right \rangle \\
&=~Ê\ell \cdot \left \langle \pi_0 \cdot \langle \pi_0 \cdot f_\bullet, \mathsf{D}[f_\bullet] \rangle, \langle \mathsf{P}(\pi_0) \cdot \mathsf{D}[f_\bullet], \mathsf{D}^2[f_\bullet] \rangle \right \rangle \\
&=~Ê\left \langle (\ell \pi_0) \cdot \langle \pi_0 \cdot f_\bullet, \mathsf{D}[f_\bullet] \rangle, \ell \cdot  \langle \mathsf{P}(\pi_0) \cdot \mathsf{D}[f_\bullet], \mathsf{D}^2[f_\bullet] \rangle \right \rangle \\
&=~Ê\left \langle (\pi_0 \langle 1, 0 \rangle) \cdot  \left \langle \pi_0 \cdot f_\bullet, \mathsf{D}[f_\bullet]  \right \rangle,   \left \langle (\ell \mathsf{P}(\pi_0)) \cdot \mathsf{D}[f_\bullet], \ell \cdot \mathsf{D}^2[f_\bullet] \right \rangle \right \rangle \\
&=~Ê\left \langle \pi_0 \cdot \left \langle (\langle 1, 0 \rangle \pi_0) \cdot f_\bullet, \langle 1, 0 \rangle \cdot \mathsf{D}[f_\bullet] \right \rangle,   \left \langle (\pi_0 \langle 1, 0 \rangle) \cdot \mathsf{D}[f_\bullet], \mathsf{D}[f_\bullet] \right \rangle \right \rangle \\
&=~ \left \langle \pi_0 \cdot  \langle 1 \cdot f_\bullet, 0_\bullet \rangle,   \langle \pi_0 \cdot (\langle 1, 0 \rangle \cdot \mathsf{D}[f_\bullet]) , \mathsf{D}[f_\bullet] \rangle \right \rangle \\
&=~Ê\left \langle  \langle \pi_0 \cdot  f_\bullet,  \pi_0 \cdot  0_\bullet \rangle,   \langle \pi_0 \cdot 0_\bullet , \mathsf{D}[f_\bullet] \rangle \right \rangle \\
&=~Ê\left \langle   \langle \pi_0 \cdot f_\bullet, 0_\bullet \rangle,   \langle 0_\bullet , \mathsf{D}[f_\bullet] \rangle \right \rangle \\
&=~ \langle \pi_0 \cdot f_\bullet, \mathsf{D}[f_\bullet] \rangle \cdot \ell \\
&=~ \mathsf{T}(f_\bullet) \cdot \ell
\end{align*}
\item Here we use Lemma \ref{astprop2} (i) and (viii), and \textbf{[DS.4]} at $n=0$:
\begin{align*}
c \cdot \mathsf{T}^2(f_\bullet) &=~ c \cdot \left \langle \pi_0 \cdot \langle \pi_0 \cdot f_\bullet, \mathsf{D}[f_\bullet] \rangle, \langle \mathsf{P}(\pi_0) \cdot \mathsf{D}[f_\bullet], \mathsf{D}^2[f_\bullet] \rangle \right \rangle \\
&=~ \left \langle (c \pi_0) \cdot \langle \pi_0 \cdot f_\bullet, \mathsf{D}[f_\bullet] \rangle, c \cdot \langle \mathsf{P}(\pi_0) \cdot \mathsf{D}[f_\bullet], \mathsf{D}^2[f_\bullet] \rangle \right \rangle \\
&=~ \left \langle\mathsf{P}(\pi_0) \cdot \left \langle \pi_0 \cdot f_\bullet, \mathsf{D}[f_\bullet] \right \rangle,  \left \langle (c \mathsf{P}(\pi_0)) \cdot \mathsf{D}[f_\bullet], c \cdot \mathsf{D}^2[f_\bullet] \right \rangle \right \rangle \\
&=~ \left \langle \left \langle (\mathsf{P}(\pi_0) \pi_0) \cdot f_\bullet, \mathsf{P}(\pi_0) \cdot \mathsf{D}[f_\bullet] \right \rangle,  \left \langle \pi_0 \cdot \mathsf{D}[f_\bullet], \mathsf{D}^2[f_\bullet] \right \rangle \right \rangle \\
&=~ \left  \langle \left \langle (\pi_0 \pi_0) \cdot f_\bullet, \mathsf{P}(\pi_0) \cdot \mathsf{D}[f_\bullet] \right \rangle,  \left \langle \pi_0 \cdot \mathsf{D}[f_\bullet], \mathsf{D}^2[f_\bullet] \right \rangle \right \rangle \\
&=~ \left \langle \left \langle (\pi_0 \pi_0) \cdot f_\bullet, \pi_0 \cdot \mathsf{D}[f_\bullet] \right \rangle,  \left \langle  \mathsf{P}(\pi_0) \cdot \mathsf{D}[f_\bullet], \mathsf{D}^2[f_\bullet] \right \rangle \right \rangle \cdot c \\
&=~ \left \langle \pi_0 \cdot \left \langle \pi_0 \cdot f_\bullet, \mathsf{D}[f_\bullet] \right \rangle,  \left \langle  \mathsf{P}(\pi_0) \cdot \mathsf{D}[f_\bullet], \mathsf{D}^2[f_\bullet] \right \rangle \right \rangle \cdot c \\
&=~ \mathsf{T}^2(f_\bullet) \cdot c
\end{align*}
\end{enumerate}
\end{proof} 

\begin{lemma}\label{Dcomp} If $f_\bullet$ and $g_\bullet$ are $\mathsf{D}$-sequences then $f_\bullet \ast g_\bullet$ is a $\mathsf{D}$-sequence. 
\end{lemma}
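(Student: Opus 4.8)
The plan is to verify the four defining axioms \textbf{[DS.1]}--\textbf{[DS.4]} for the composite $f_\bullet \ast g_\bullet$, reducing each one to the corresponding axiom for $g_\bullet$ by transporting the relevant structural map ($\langle 1,0 \rangle$, $1 \times \pi_i$, $1 \times (\pi_0+\pi_1)$, $\ell$, or $c$) from the left-hand factor across the composition $\ast$ onto the right-hand factor, where the $\mathsf{D}$-sequence hypothesis on $g_\bullet$ can be applied.

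First I would establish the higher-order chain rule $\mathsf{D}^m[f_\bullet \ast g_\bullet] = \mathsf{T}^m(f_\bullet) \ast \mathsf{D}^m[g_\bullet]$ for all $m$, obtained by iterating Proposition \ref{Diff1} (iv) together with the functoriality of $\mathsf{T}$ (Lemma \ref{Tfunctor} (ii)). I would also record that $\mathsf{T}^n(f_\bullet)$ is again a $\mathsf{D}$-sequence for every $n$, by iterating Proposition \ref{Dprop1} (ii). This second observation is what licenses applying the single-step compatibility identities of Proposition \ref{Dprop1} (iii)--(vii) at the $(n+1)$-st or $(n+2)$-nd tangent level: one applies them not to $f_\bullet$ but to the $\mathsf{D}$-sequence $\mathsf{T}^n(f_\bullet)$.

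The common pattern for each axiom is then as follows: rewrite $\mathsf{D}^{n+1}[f_\bullet \ast g_\bullet]$ (or the $\mathsf{D}^{n+2}$ version) using the chain rule, apply Lemma \ref{astprop1} (ii) to pull the leading structural map onto $\mathsf{T}^{n+1}(f_\bullet)$ (respectively $\mathsf{T}^{n+2}(f_\bullet)$), use the appropriate clause of Proposition \ref{Dprop1} to slide that map past the tangent powers, apply Lemma \ref{astprop1} (iv) to deposit it on the right-hand factor, invoke the matching axiom \textbf{[DS.$k$]} for $g_\bullet$, and finally reverse these manipulations. Concretely, \textbf{[DS.1]} uses clause (iii) and terminates at $\mathsf{T}^n(f_\bullet) \ast 0_\bullet = 0_\bullet$, which holds because $h0 = 0$ for any map $h$ in a left additive category; \textbf{[DS.3]} uses clause (vi) and \textbf{[DS.4]} uses clause (vii), each closing by re-applying the chain rule; and \textbf{[DS.2]} uses clause (v) on the way in and clause (iv) twice on the way back out, additionally relying on the fact that $\ast$ distributes over $+$ on the left (an immediate consequence of left additivity) in order to split the sum produced by \textbf{[DS.2]} for $g_\bullet$.

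The only genuinely delicate point is the bookkeeping with tangent exponents: the identities of Proposition \ref{Dprop1} are stated for a single $\mathsf{T}$ (or a single $\mathsf{T}^2$), so I must consistently apply them to the auxiliary $\mathsf{D}$-sequence $\mathsf{T}^n(f_\bullet)$ and its associated $\mathsf{T}_2$, and verify that the intermediate sequence $\mathsf{T}_2\!\left(\mathsf{T}^n(f_\bullet)\right)$ appearing in the \textbf{[DS.2]} computation is exactly the one that clause (iv) reconciles back to $(1 \times \pi_i) \cdot \mathsf{T}^{n+1}(f_\bullet)$. Once this indexing is arranged, each axiom reduces to a short, purely formal chain of rewrites, with none of the combinatorics of the Fa\`a di Bruno formula.
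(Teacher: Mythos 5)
Your proposal is correct and follows essentially the same route as the paper's proof: rewrite $\mathsf{D}^{n+1}[f_\bullet \ast g_\bullet]$ (resp.\ $\mathsf{D}^{n+2}[f_\bullet \ast g_\bullet]$) via the higher-order version of Proposition \ref{Diff1} (iv), transport the structural map across $\ast$ using Lemma \ref{astprop1} (ii) and (iv) together with Proposition \ref{Dprop1} (iii)--(vii), invoke \textbf{[DS.1]}--\textbf{[DS.4]} for $g_\bullet$, and close using left additivity where sums or zeros appear. Your one addition --- explicitly recording that $\mathsf{T}^n(f_\bullet)$ is again a $\mathsf{D}$-sequence (by iterating Proposition \ref{Dprop1} (ii)) so that the single-step identities may legitimately be applied at level $n$, including the reconciliation of $\mathsf{T}_2\!\left(\mathsf{T}^n(f_\bullet)\right)$ via clause (iv) in the \textbf{[DS.2]} case --- is a bookkeeping point the paper leaves implicit, and your computation matches the paper's exactly.
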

\begin{proof} We will show {\bf [DS.1]} to {\bf [DS.4]} using the identities from Proposition \ref{Dprop1}, and in particular the higher-order version of Proposition \ref{Diff1} (iv): 
\begin{enumerate}[{\bf [DS.1]}]
\item Here we use Lemma \ref{astprop2} (i), Lemma \ref{astprop1} (ii), Proposition \ref{Diff1} (iv), Proposition \ref{Dprop1} (iii), and {\bf [DS.1]} for $g_\bullet$: 
\begin{align*}
\langle 1, 0 \rangle \cdot \mathsf{D}^{n+1}[f_\bullet \ast g_\bullet] &=~ \langle 1, 0 \rangle \cdot (\mathsf{T}^{n+1}(f_\bullet) \ast \mathsf{D}^{n+1}[g_\bullet]) \\
&=~ (\langle 1, 0 \rangle \cdot \mathsf{T}^{n+1}(f_\bullet)) \ast \mathsf{D}^{n+1}[g_\bullet] \\
&=~ (\mathsf{T}^n(f_\bullet) \cdot \langle 1, 0 \rangle) \ast \mathsf{D}^{n+1}[g_\bullet] \\
&=~ \mathsf{T}^n(f_\bullet) \ast (\langle 1, 0 \rangle \cdot \mathsf{D}^{n+1}[g_\bullet]) \\
&=~  \mathsf{T}^n(f_\bullet) \ast 0_\bulletÊ\\
&=~ 0_\bullet
\end{align*}
\item Here we use Lemma \ref{astprop2} (i), Lemma \ref{astprop1} (ii), Proposition \ref{Diff1} (iv), Proposition \ref{Dprop1} (iv) and (v), the additive structure, and {\bf [DS.2]} for $g_\bullet$: 
\begin{align*}
&\left(1 \times (\pi_0 + \pi_1) \right) \cdot \mathsf{D}^{n+1}[f_\bullet \ast g_\bullet] =~ \left(1 \times (\pi_0 + \pi_1)\right) \cdot (\mathsf{T}^{n+1}(f_\bullet) \ast \mathsf{D}^{n+1}[g_\bullet]) \\
&=~ \left(\left(1 \times (\pi_0 + \pi_1)\right) \cdot \mathsf{T}^{n+1}(f_\bullet) \right) \ast \mathsf{D}^{n+1}[g_\bullet] \\
&=~ \left(\mathsf{T}_2(\mathsf{T}^{n}(f_\bullet)) \cdot \left(1 \times (\pi_0 + \pi_1)\right) \right) \ast \mathsf{D}^{n+1}[g_\bullet] \\
&=~ \mathsf{T}_2(\mathsf{T}^{n}(f_\bullet)) \ast \left( \left(1 \times (\pi_0 + \pi_1)\right) \cdot \mathsf{D}^{n+1}[g_\bullet] \right) \\
&=~Ê\mathsf{T}_2(\mathsf{T}^{n}(f_\bullet)) \ast \left( \left((1 \times \pi_1) \cdot \mathsf{D}^{n+1}[g_\bullet] \right) + \left((1 \times \pi_1) \cdot \mathsf{D}^{n+1}[g_\bullet] \right) \right)  \\
&=~ \left(Ê\mathsf{T}_2(\mathsf{T}^{n}(f_\bullet)) \ast \left((1 \times \pi_1) \cdot \mathsf{D}^{n+1}[g_\bullet] \right) \right)+ \left( \mathsf{T}_2(\mathsf{T}^{n}(f_\bullet)) \ast \left((1 \times \pi_1) \cdot \mathsf{D}^{n+1}[g_\bullet] \right) \right)  \\
&=~ \left(Ê\left( \mathsf{T}_2(\mathsf{T}^{n}(f_\bullet)) \cdot (1 \times \pi_0) \right) \ast \mathsf{D}^{n+1}[g_\bullet]  \right)+ \left(Ê\left( \mathsf{T}_2(\mathsf{T}^{n}(f_\bullet)) \cdot (1 \times \pi_1) \right) \ast \mathsf{D}^{n+1}[g_\bullet]  \right) \\
&=~ \left(Ê\left( (1 \times \pi_0) \cdot \mathsf{T}^{n+1}(f_\bullet) \right) \ast \mathsf{D}^{n+1}[g_\bullet]  \right)+ \left(Ê\left( (1 \times \pi_1) \cdot \mathsf{T}^{n+1}(f_\bullet) \right) \ast \mathsf{D}^{n+1}[g_\bullet]  \right) \\
&=~ \left(Ê(1 \times \pi_0) \cdot (\mathsf{T}^{n+1}(f_\bullet) \ast \mathsf{D}^{n+1}[g_\bullet])   \right)+  \left(Ê(1 \times \pi_1) \cdot (\mathsf{T}^{n+1}(f_\bullet) \ast \mathsf{D}^{n+1}[g_\bullet])   \right) \\
&=~ \left(Ê(1 \times \pi_0) \cdot (\mathsf{T}^{n+1}(f_\bullet) \ast \mathsf{D}^{n+1}[g_\bullet])   \right)+  \left(Ê(1 \times \pi_1) \cdot (\mathsf{T}^{n+1}(f_\bullet) \ast \mathsf{D}^{n+1}[g_\bullet])   \right) \\
&=~ \left((1 \times \pi_1) \cdot \mathsf{D}^{n+1}[f_\bullet \ast g_\bullet] \right) + \left((1 \times \pi_1) \cdot \mathsf{D}^{n+1}[f_\bullet \ast g_\bullet] \right)
\end{align*}
\item Here we use Lemma \ref{astprop2} (i), Lemma \ref{astprop1} (ii), Proposition \ref{Diff1} (iv), Proposition \ref{Dprop1} (vi), and {\bf [DS.3]} for $g_\bullet$: 
\begin{align*}
\ell \cdot \mathsf{D}^{n+2}[f_\bullet \ast g_\bullet] &=~ \ell \cdot (\mathsf{T}^{n+2}(f_\bullet) \ast \mathsf{D}^{n+2}[g_\bullet]) \\
&=~ (\ell \cdot \mathsf{T}^{n+2}(f_\bullet)) \ast \mathsf{D}^{n+2}[g_\bullet] \\
&=~ (\mathsf{T}^{n+1}(f_\bullet) \cdot \ell) \ast \mathsf{D}^{n+2}[g_\bullet] \\
&=~ \mathsf{T}^{n+1}(f_\bullet) \ast (\ell \cdot \mathsf{D}^{n+2}[g_\bullet]) \\
&=~  \mathsf{T}^{n+1}(f_\bullet) \ast \mathsf{D}^{n+1}[g_\bullet]Ê\\
&=~ \mathsf{D}^{n+1}[f_\bullet \ast g_\bullet] 
\end{align*}
\item Here we use Lemma \ref{astprop2} (i), Lemma \ref{astprop1} (ii), Proposition \ref{Diff1} (iv), Proposition \ref{Dprop1} (vii), and {\bf [DS.4]} for $g_\bullet$: 
\begin{align*}
c \cdot \mathsf{D}^{n+2}[f_\bullet \ast g_\bullet] &=~ c \cdot (\mathsf{T}^{n+2}(f_\bullet) \ast \mathsf{D}^{n+2}[g_\bullet]) \\
&=~ (c \cdot \mathsf{T}^{n+2}(f_\bullet)) \ast \mathsf{D}^{n+2}[g_\bullet] \\
&=~ (\mathsf{T}^{n+2}(f_\bullet) \cdot c) \ast \mathsf{D}^{n+2}[g_\bullet] \\
&=~ \mathsf{T}^{n+2}(f_\bullet) \ast (c \cdot \mathsf{D}^{n+2}[g_\bullet]) \\
&=~  \mathsf{T}^{n+2}(f_\bullet) \ast \mathsf{D}^{n+2}[g_\bullet]Ê\\
&=~ \mathsf{D}^{n+2}[f_\bullet \ast g_\bullet] 
\end{align*}
\end{enumerate}
\end{proof} 

Finally we may properly state that we obtain a category of $\mathsf{D}$-sequences: 

\begin{definition}\label{Dcat} Let $\mathbb{X}$ be a Cartesian left additive category. Then we denote $\mathcal{D}[\mathbb{X}]$ as the sub-Cartesian left additive category of $\overline{\mathcal{D}}[\mathbb{X}]$ of $\mathsf{D}$-sequences of $\mathbb{X}$. \end{definition}

\begin{proposition}\label{Dseqcat} $\mathcal{D}[\mathbb{X}]$ is a Cartesian left additive category. 
\end{proposition}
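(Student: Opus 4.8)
The plan is to exploit the fact, established in Proposition \ref{preDprod} and in Section \ref{predaddsec}, that the ambient category $\overline{\mathcal{D}}[\mathbb{X}]$ of \emph{all} pre-$\mathsf{D}$-sequences is already a Cartesian left additive category. Since every axiom of a Cartesian left additive category (associativity, unit laws, the universal properties of products and of the terminal object, left additivity of composition, and additivity of the projections) is an equational identity between composites of maps, all of these identities are inherited automatically by any collection of maps that is closed under the relevant operations and contains the relevant structural maps. Consequently the whole proof reduces to verifying \emph{closure}, and nearly all of this has already been assembled in Lemma \ref{Dlemma1}, Proposition \ref{Dprop1}, and Lemma \ref{Dcomp}.

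First I would check that $\mathsf{D}$-sequences form a subcategory of $\overline{\mathcal{D}}[\mathbb{X}]$: the identity $i_\bullet$ is a $\mathsf{D}$-sequence by Lemma \ref{Dlemma1}(ii), and the composite $f_\bullet \ast g_\bullet$ of two $\mathsf{D}$-sequences is again a $\mathsf{D}$-sequence by Lemma \ref{Dcomp}. Associativity and the unit laws then hold in $\mathcal{D}[\mathbb{X}]$ because they already hold in $\overline{\mathcal{D}}[\mathbb{X}]$.

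Next, for the finite product structure: the projections $i_\bullet \cdot \pi_0$ and $i_\bullet \cdot \pi_1$ are $\mathsf{D}$-sequences by Lemma \ref{Dlemma1}(v), the pairing $\langle f_\bullet, g_\bullet \rangle$ of $\mathsf{D}$-sequences is a $\mathsf{D}$-sequence by Lemma \ref{Dlemma1}(vi), and the unique map $i_\bullet \cdot \mathsf{t}$ to the terminal object is a $\mathsf{D}$-sequence since $\mathsf{t}$ is additive (any map into $\mathsf{1}$ is additive, the hom-monoid $\mathbb{X}(A,\mathsf{1})$ being trivial) so that Lemma \ref{Dlemma1}(iv) applies. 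As these are the very maps witnessing the finite product structure of $\overline{\mathcal{D}}[\mathbb{X}]$ in Proposition \ref{preDprod}, the universal properties carry over verbatim.

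Finally, for the left additive structure: the zero sequence $0_\bullet$ is a $\mathsf{D}$-sequence by Lemma \ref{Dlemma1}(i) and sums of $\mathsf{D}$-sequences are $\mathsf{D}$-sequences by Lemma \ref{Dlemma1}(vii), so each hom-set of $\mathcal{D}[\mathbb{X}]$ is a submonoid of the corresponding hom-set of $\overline{\mathcal{D}}[\mathbb{X}]$; left additivity of composition ($f_\bullet \ast (g_\bullet + h_\bullet) = f_\bullet \ast g_\bullet + f_\bullet \ast h_\bullet$ and $f_\bullet \ast 0_\bullet = 0_\bullet$) and additivity of the projections are then inherited from $\overline{\mathcal{D}}[\mathbb{X}]$. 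Since this assembly step contains no genuinely new computation, I expect the real difficulty to have been Lemma \ref{Dcomp}, the closure of $\mathsf{D}$-sequences under composition, rather than anything in the verification above.
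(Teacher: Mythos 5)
Your proposal is correct and takes essentially the same approach as the paper: both reduce the proposition to closure of $\mathsf{D}$-sequences under the structural operations, citing Lemma \ref{Dcomp} for composition and Lemma \ref{Dlemma1} for identities, projections, pairings, zeros, and sums, with all equational axioms then inherited from $\overline{\mathcal{D}}[\mathbb{X}]$. Your explicit check that $i_\bullet \cdot \mathsf{t}$ is a $\mathsf{D}$-sequence (via additivity of $\mathsf{t}$ and Lemma \ref{Dlemma1} (iv)) is a small detail the paper leaves implicit, and you verify it correctly.
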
 
\begin{proof} Composition is well defined by Lemma \ref{Dcomp}, and the identities are well defined by Lemma \ref{Dlemma1} (ii). The left additive structure is well defined by Lemma \ref{Dlemma1} (i) and (vii). The finite product structure is well defined by Lemma \ref{Dlemma1} (v) and (vi). Then that $\mathcal{D}[\mathbb{X}]$ is a Cartesian left additive category follows from being a subcategory of $\overline{\mathcal{D}}[\mathbb{X}]$. 
\end{proof} 

\subsection{Comonad of $\mathsf{D}$-Sequences}\label{Dcomsec}

In this section we show that the category of $\mathsf{D}$-sequences does indeed provide a comonad on the category of Cartesian left additive categories $\mathsf{CLAC}$ (as defined in Section \ref{predaddsec}). In particular, as we will show in the next section, the coalgebras of this comonad are precisely Cartesian differential categories. While most of the work in showing that we have a comonad was done in Section \ref{predcomsec}, we still need to show that the $\mathsf{D}$-sequence axioms are well preserved. 

Let $\mathsf{F}: \mathbb{X} \to \mathbb{Y}$ be a strict Cartesian left additive functor, then define the functor $\mathcal{D}[\mathsf{F}]: \mathcal{D}[\mathbb{X}] \to \mathcal{D}[\mathbb{Y}]$ to be the restriction of $\overline{\mathcal{D}}[\mathsf{F}]$ (as defined in Lemma \ref{DFunctor}) to the category of $\mathsf{D}$-sequences. Explicitly, on objects $\mathcal{D}[\mathsf{F}](A)=\mathsf{F}(A)=\overline{\mathcal{D}}[\mathsf{F}](A)$, while on $\mathsf{D}$-sequences, $\mathcal{D}[\mathsf{F}](f_\bullet)=\overline{\mathcal{D}}[\mathsf{F}](f_\bullet)$.

\begin{lemma}\label{DFunctor2} If $\mathsf{F}: \mathbb{X} \to \mathbb{Y}$ is a strict Cartesian left additive functor, then $\mathcal{D}[\mathsf{F}]: \mathcal{D}[\mathbb{X}] \to \mathcal{D}[\mathbb{Y}]$ is a strict Cartesian left additive functor. 
\end{lemma}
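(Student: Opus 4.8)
The plan is to reduce everything to the already-established fact (Lemmas \ref{DFunctor} and \ref{Daddfunc}) that $\overline{\mathcal{D}}[\mathsf{F}]$ is a strict Cartesian left additive functor on the ambient category $\overline{\mathcal{D}}[\mathbb{X}]$ of pre-$\mathsf{D}$-sequences. Since $\mathcal{D}[\mathbb{X}]$ and $\mathcal{D}[\mathbb{Y}]$ are sub-Cartesian left additive categories of $\overline{\mathcal{D}}[\mathbb{X}]$ and $\overline{\mathcal{D}}[\mathbb{Y}]$ (Definition \ref{Dcat}, Proposition \ref{Dseqcat}) with inherited structure, and $\mathcal{D}[\mathsf{F}]$ is by definition merely the restriction of $\overline{\mathcal{D}}[\mathsf{F}]$, the preservation of identities, composition, products, and the additive structure is automatically inherited. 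Thus the only thing that genuinely needs to be verified is that $\mathcal{D}[\mathsf{F}]$ is well defined, i.e., that $\overline{\mathcal{D}}[\mathsf{F}]$ carries a $\mathsf{D}$-sequence of $\mathbb{X}$ to a $\mathsf{D}$-sequence of $\mathbb{Y}$.

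To this end I would first record three compatibilities between $\overline{\mathcal{D}}[\mathsf{F}]$ and the operations appearing in \textbf{[DS.1]}--\textbf{[DS.4]}. Using $\mathsf{D}^m[f_\bullet]_n = f_{m+n}$ together with $\overline{\mathcal{D}}[\mathsf{F}](f_\bullet)_n = \mathsf{F}(f_n)$, one checks termwise that $\overline{\mathcal{D}}[\mathsf{F}]$ commutes with the differential, $\overline{\mathcal{D}}[\mathsf{F}](\mathsf{D}^m[f_\bullet]) = \mathsf{D}^m[\overline{\mathcal{D}}[\mathsf{F}](f_\bullet)]$. Next, because $\mathsf{F}$ is strictly Cartesian so that $\mathsf{F}\mathsf{P}=\mathsf{P}\mathsf{F}$, the identity $(m \cdot g_\bullet)_n = \mathsf{P}^n(m) g_n$ gives $\overline{\mathcal{D}}[\mathsf{F}](m \cdot g_\bullet) = \mathsf{F}(m) \cdot \overline{\mathcal{D}}[\mathsf{F}](g_\bullet)$. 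Finally, since $\mathsf{F}$ preserves products, projections, and --- being additive --- sums and zeros, it fixes each of the structural maps, namely $\mathsf{F}(\langle 1,0\rangle) = \langle 1,0\rangle$, $\mathsf{F}(1\times(\pi_0+\pi_1)) = 1\times(\pi_0+\pi_1)$, $\mathsf{F}(\ell) = \ell$, and $\mathsf{F}(c) = c$; moreover $\overline{\mathcal{D}}[\mathsf{F}]$ preserves $0_\bullet$ and the pointwise sum (Lemma \ref{Daddfunc}).

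With these in hand the verification of each axiom is a one-line intertwining. For instance, for \textbf{[DS.3]} I would compute
\[ \ell \cdot \mathsf{D}^{n+2}[\overline{\mathcal{D}}[\mathsf{F}](f_\bullet)] = \overline{\mathcal{D}}[\mathsf{F}]\!\left(\ell \cdot \mathsf{D}^{n+2}[f_\bullet]\right) = \overline{\mathcal{D}}[\mathsf{F}]\!\left(\mathsf{D}^{n+1}[f_\bullet]\right) = \mathsf{D}^{n+1}[\overline{\mathcal{D}}[\mathsf{F}](f_\bullet)], \]
using the three compatibilities and \textbf{[DS.3]} for $f_\bullet$; axioms \textbf{[DS.1]} and \textbf{[DS.4]} follow identically (with $\langle 1,0\rangle$ resp.\ $c$ in place of $\ell$, and $0_\bullet$ resp.\ the sequence itself on the right-hand side), while \textbf{[DS.2]} needs in addition that $\overline{\mathcal{D}}[\mathsf{F}]$ preserves the pointwise sum. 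I do not anticipate a genuine obstacle: the whole argument is the observation that $\overline{\mathcal{D}}[\mathsf{F}]$ commutes with every ingredient of the $\mathsf{D}$-sequence axioms, so closure is automatic. The only mild care needed is confirming that $\mathsf{F}$ fixes the structural maps $\langle 1,0\rangle$, $1\times(\pi_0+\pi_1)$, $\ell$, and $c$, which is precisely where strict Cartesianness and additivity of $\mathsf{F}$ are used.
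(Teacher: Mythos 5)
Your proof is correct, and its skeleton matches the paper's: both reduce via Lemma \ref{Daddfunc} to the single question of well-definedness (closure of $\mathsf{D}$-sequences under $\overline{\mathcal{D}}[\mathsf{F}]$), and both hinge on the same two facts, namely $\mathsf{F}\mathsf{P}=\mathsf{P}\mathsf{F}$ and that strict Cartesian left additivity forces $\mathsf{F}$ to fix $\langle 1,0\rangle$, $1\times(\pi_0+\pi_1)$, $1\times\pi_j$, $\ell$, and $c$. Where you diverge is in the verification itself: the paper deliberately switches to the equivalent termwise axioms \textbf{[DS.$1'$]}--\textbf{[DS.$4'$]} (remarking that this is ``easier'') and computes directly with components, e.g.\ $\mathsf{P}^k(\ell)\,\mathsf{F}(f_{n+2}) = \mathsf{F}(\mathsf{P}^k(\ell) f_{n+2}) = \mathsf{F}(f_{n+1})$, whereas you stay at the level of whole pre-$\mathsf{D}$-sequences --- the paper's ``external'' method --- by first recording the intertwining identities $\overline{\mathcal{D}}[\mathsf{F}](\mathsf{D}^m[f_\bullet]) = \mathsf{D}^m[\overline{\mathcal{D}}[\mathsf{F}](f_\bullet)]$ and $\overline{\mathcal{D}}[\mathsf{F}](m\cdot g_\bullet) = \mathsf{F}(m)\cdot\overline{\mathcal{D}}[\mathsf{F}](g_\bullet)$, after which each unprimed axiom \textbf{[DS.1]}--\textbf{[DS.4]} really is the one-line computation you display for \textbf{[DS.3]} (with Lemma \ref{Daddfunc} supplying preservation of $0_\bullet$ and pointwise sums for \textbf{[DS.1]} and \textbf{[DS.2]}). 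The trade-off is mild: your compatibility lemmas must themselves be checked termwise, so the same index bookkeeping occurs once, but it is then packaged into reusable identities that make the four verifications uniform; the paper's direct route avoids stating auxiliary lemmas at the cost of four longer component calculations. Both are complete proofs, and your version arguably better exhibits \emph{why} closure is automatic: $\overline{\mathcal{D}}[\mathsf{F}]$ commutes with every ingredient of the $\mathsf{D}$-sequence axioms.
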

\begin{proof} By Lemma \ref{Daddfunc}, we only need to check that when $f_\bullet$ is a $\mathsf{D}$-sequence, then so is $\mathcal{D}[\mathsf{F}](f_\bullet)$. Note that since $\mathsf{F}$ is a strict Cartesian left additive functor, we have that: 
\begin{align*}
\mathsf{F}(\langle 1, 0 \rangle) = \langle 1, 0 \rangle && \mathsf{F}(1 \times (\pi_0 + \pi_1)) = 1 \times (\pi_0 + \pi_1) && \mathsf{F}(1 \times \pi_j)=1 \times \pi_j \\
\mathsf{F}(\ell) = \ell && \mathsf{F}(c)=c &&
\end{align*}
And recall that $\mathsf{F} \mathsf{P} = \mathsf{P} \mathsf{F}$. Therefore it is easier to check that $\mathcal{D}[\mathsf{F}](f_\bullet)$ satisfies {\bf [DS.$1^\prime$]} to {\bf [DS.$4^\prime$]}. In the following, let $k \leq n$:  \\Ê\\
{\bf [DS.$1^\prime$]} Here we use {\bf [DS.$1^\prime$]} for $f_\bullet$: 
\begin{align*}
\mathsf{P}^k(\langle 1, 0 \rangle) \mathcal{D}[\mathsf{F}](f_\bullet)_{n+1} &=~ \mathsf{P}^k(\mathsf{F}(\langle 1, 0 \rangle)) \mathsf{F}(f_{n+1})\\
&=~  \mathsf{F}(\mathsf{P}^k(\langle 1, 0 \rangle)) \mathsf{F}(f_{n+1}) \\
&=~  \mathsf{F}(\mathsf{P}^k(\langle 1, 0 \rangle) f_{n+1})\\
&=~ \mathsf{F}(0) \\
&=~ 0
\end{align*}
{\bf [DS.$2^\prime$]} Here we use {\bf [DS.$2^\prime$]} for $f_\bullet$: 
\begin{align*}
\mathsf{P}^k(1 \times (\pi_0 + \pi_1)) \mathcal{D}[\mathsf{F}](f_\bullet)_{n+1} &=~Ê\mathsf{P}^k\left(\mathsf{F}(1 \times (\pi_0 + \pi_1))\right)  \mathsf{F}(f_{n+1})\\
&=~ \mathsf{F}\left(\mathsf{P}^k(1 \times (\pi_0 + \pi_1))\right) \mathsf{F}(f_{n+1})\\
&=~ \mathsf{F}\left(\mathsf{P}^k(1 \times (\pi_0 + \pi_1)) f_{n+1} \right) \\
&=~ \mathsf{F}\left(\mathsf{P}^k(1 \times \pi_0)f_{n+1} + \mathsf{P}^k(1\times \pi_1)f_{n+1} \right)\\
&=~ \mathsf{F}\left(\mathsf{P}^k(1 \times \pi_0)f_{n+1}\right) + \mathsf{F}\left(\mathsf{P}^k(1 \times \pi_1)f_{n+1}\right) \\
&=~ \mathsf{F}\left(\mathsf{P}^k(1 \times \pi_0) \right) \mathsf{F}(f_{n+1}) +  \mathsf{F}\left(\mathsf{P}^k(1 \times \pi_1) \right) \mathsf{F}(f_{n+1})\\
&=~ Ê\mathsf{P}^k\left(\mathsf{F}(1 \times \pi_0 )\right)  \mathsf{F}(f_{n+1}) + \mathsf{P}^k\left(\mathsf{F}(1 \times \pi_1 )\right)  \mathsf{F}(f_{n+1}) \\
&=~ \mathsf{P}^k(1 \times \pi_0) \mathcal{D}[\mathsf{F}](f_\bullet)_{n+1} + \mathsf{P}^k(1 \times \pi_1) \mathcal{D}[\mathsf{F}](f_\bullet)_{n+1} 
\end{align*}
{\bf [DS.$3^\prime$]} Here we use {\bf [DS.$3^\prime$]} for $f_\bullet$: 
\begin{align*}
\mathsf{P}^k(\ell) \mathcal{D}[\mathsf{F}](f_\bullet)_{n+2}&=~ \mathsf{P}^k(\mathsf{F}(\ell)) \mathsf{F}(f_{n+2})\\
&=~ \mathsf{F}(\mathsf{P}^k(\ell))\mathsf{F}(f_{n+2})\\
&=~ \mathsf{F}(\mathsf{P}^k(\ell)f_{n+2}) \\
&=~ \mathsf{F}(f_{n+1})\\
&=~\mathcal{D}[\mathsf{F}](f_\bullet)_{n+1}
\end{align*}
{\bf [DS.$4^\prime$]} Here we use {\bf [DS.$4^\prime$]} for $f_\bullet$: 
\begin{align*}
\mathsf{P}^k(c) \mathcal{D}[\mathsf{F}](f_\bullet)_{n+2}&=~ \mathsf{P}^k(\mathsf{F}(c)) \mathsf{F}(f_{n+2})\\
&=~ \mathsf{F}(\mathsf{P}^k(c))\mathsf{F}(f_{n+2})\\
&=~ \mathsf{F}(\mathsf{P}^k(c)f_{n+2}) \\
&=~ \mathsf{F}(f_{n+2})\\
&=~ \mathcal{D}[\mathsf{F}](f_\bullet)_{n+2}
\end{align*}
\end{proof} 

\begin{lemma} $\mathcal{D}: \mathsf{CLAC} \to \mathsf{CLAC}$ is a functor. 
\end{lemma}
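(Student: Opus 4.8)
The plan is to check the three conditions making $\mathcal{D}$ a functor: well-definedness on objects, well-definedness on morphisms, and preservation of identities and composites. All the substantive work has in fact already been carried out in the preceding lemmas, so the proof will be short and will simply assemble those results.

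First I would note that $\mathcal{D}$ is well-defined on objects by Proposition \ref{Dseqcat}, which guarantees that $\mathcal{D}[\mathbb{X}]$ is a Cartesian left additive category whenever $\mathbb{X}$ is, so that objects of $\mathsf{CLAC}$ are sent to objects of $\mathsf{CLAC}$. For well-definedness on morphisms, I would invoke Lemma \ref{DFunctor2}: for a strict Cartesian left additive functor $\mathsf{F}: \mathbb{X} \to \mathbb{Y}$, the map $\mathcal{D}[\mathsf{F}]$ is again a strict Cartesian left additive functor $\mathcal{D}[\mathbb{X}] \to \mathcal{D}[\mathbb{Y}]$; the crucial point in that lemma is precisely that $\mathcal{D}[\mathsf{F}]$ sends $\mathsf{D}$-sequences to $\mathsf{D}$-sequences, which is what makes the restriction of $\overline{\mathcal{D}}[\mathsf{F}]$ to $\mathcal{D}[\mathbb{X}]$ land in $\mathcal{D}[\mathbb{Y}]$.

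Finally, for functoriality I would observe that $\mathcal{D}[\mathsf{F}]$ is by definition nothing but the restriction of $\overline{\mathcal{D}}[\mathsf{F}]$ to the subcategory of $\mathsf{D}$-sequences, and that $\overline{\mathcal{D}}$ is already a functor by Lemma \ref{Dbarfunctor}. Hence preservation of identities and composites transfers immediately from $\overline{\mathcal{D}}$ by restriction: $\mathcal{D}[1_\mathbb{X}]$ agrees on $\mathsf{D}$-sequences with $\overline{\mathcal{D}}[1_\mathbb{X}] = 1_{\overline{\mathcal{D}}[\mathbb{X}]}$ and so equals $1_{\mathcal{D}[\mathbb{X}]}$, while for composable $\mathsf{F}$ and $\mathsf{G}$ we have that $\mathcal{D}[\mathsf{F}\mathsf{G}]$ agrees on $\mathsf{D}$-sequences with $\overline{\mathcal{D}}[\mathsf{F}\mathsf{G}] = \overline{\mathcal{D}}[\mathsf{F}]\overline{\mathcal{D}}[\mathsf{G}]$ and so equals $\mathcal{D}[\mathsf{F}]\mathcal{D}[\mathsf{G}]$. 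Since there is no residual calculation to perform, there is essentially no obstacle here; the only care needed is to confirm that the restriction genuinely lands in the category of $\mathsf{D}$-sequences, which is exactly the content already supplied by Lemma \ref{DFunctor2}.
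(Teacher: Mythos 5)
Your proof is correct and matches the paper's own argument exactly: well-definedness on objects via Proposition \ref{Dseqcat}, well-definedness on morphisms via Lemma \ref{DFunctor2}, and preservation of identities and composites inherited from $\overline{\mathcal{D}}$ (Lemma \ref{Dbarfunctor}) since $\mathcal{D}[\mathsf{F}]$ is by definition the restriction of $\overline{\mathcal{D}}[\mathsf{F}]$. No gaps; your additional remark that the restriction genuinely lands in $\mathcal{D}[\mathbb{Y}]$ is precisely the content of Lemma \ref{DFunctor2}, as you note.
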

\begin{proof} That $\mathcal{D}$ is well defined on objects (Cartesian left additive categories) follows from Proposition \ref{Dseqcat}, while Lemma \ref{DFunctor2} says that $\mathcal{D}$ is well defined on maps (strict Cartesian left additive functors). That $\mathcal{D}$ preserves identities and composition follows from Lemma \ref{Dbarfunctor}.
\end{proof} 

The comonad structure on $\mathcal{D}$ is precisely the same as the comonad structure on $\overline{\mathcal{D}}$, that is,  the counit is defined as $\varepsilon := \overline{\varepsilon}$ (as defined in Lemma \ref{epsilonlemma1}) and the comultiplication is defined as $\delta := \overline{\delta}$ (as defined in Lemma \ref{deltalemma1}). We still have to check however that $\delta$ is well defined.  

\begin{lemma}\label{epsilonlemma3} $\varepsilon: \mathcal{D}[\mathbb{X}] \to \mathbb{X}$ is a strict Cartesian left additive functor. 
\end{lemma}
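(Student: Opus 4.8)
The plan is to exploit the fact that $\varepsilon$ is nothing more than the restriction of $\overline{\varepsilon}$ to the subcategory $\mathcal{D}[\mathbb{X}]$, so that essentially all the work has already been done. First I would record that, by Definition \ref{Dcat} and Proposition \ref{Dseqcat}, $\mathcal{D}[\mathbb{X}]$ is a \emph{sub}-Cartesian left additive category of $\overline{\mathcal{D}}[\mathbb{X}]$: its composition $\ast$, identities $i_\bullet$, projections $i_\bullet \cdot \pi_j$, pairings $\langle f_\bullet, g_\bullet\rangle$, terminal object, zero maps $0_\bullet$, and sums $f_\bullet + g_\bullet$ are all literally the operations of $\overline{\mathcal{D}}[\mathbb{X}]$ restricted to $\mathsf{D}$-sequences (this is exactly what Lemma \ref{Dlemma1} and Lemma \ref{Dcomp} guarantee stay inside $\mathcal{D}[\mathbb{X}]$). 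In other words, the inclusion $\mathcal{D}[\mathbb{X}] \hookrightarrow \overline{\mathcal{D}}[\mathbb{X}]$ is a strict Cartesian left additive functor.

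Since $\varepsilon = \overline{\varepsilon}$ on the nose (sending $A \mapsto A$ and $f_\bullet \mapsto f_0$), it factors as this inclusion followed by $\overline{\varepsilon}\colon \overline{\mathcal{D}}[\mathbb{X}] \to \mathbb{X}$. By Lemma \ref{epsilonadd}, $\overline{\varepsilon}$ is a strict Cartesian left additive functor, and strict Cartesian left additive functors compose, so I would conclude immediately that $\varepsilon$ is one as well. The only genuinely new thing to note is well-definedness, namely that $\varepsilon(f_\bullet) = f_0$ is indeed a map of $\mathbb{X}$ whenever $f_\bullet$ is a $\mathsf{D}$-sequence; but this is automatic, since the $0$-th term of any pre-$\mathsf{D}$-sequence $f_\bullet \colon A \to B$ is already a map $f_0 \colon A \to B$ in $\mathbb{X}$.

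For completeness I would spell out the verifications, which are identical to those in Lemma \ref{epsilonlemma1}: functoriality via $\varepsilon(i_\bullet) = i_0 = 1$ and $\varepsilon(f_\bullet \ast g_\bullet) = (f_\bullet \ast g_\bullet)_0 = f_0 g_0 = \varepsilon(f_\bullet)\varepsilon(g_\bullet)$; strict preservation of products via $\varepsilon(i_\bullet \cdot \pi_j) = i_0 \pi_j = \pi_j$ together with preservation of the terminal object; and preservation of the additive structure via $\varepsilon(0_\bullet) = 0_0 = 0$ and $\varepsilon(f_\bullet + g_\bullet) = (f_\bullet + g_\bullet)_0 = f_0 + g_0 = \varepsilon(f_\bullet) + \varepsilon(g_\bullet)$. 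There is no real obstacle here: the statement is a routine consequence of Lemma \ref{epsilonadd} once one observes that the Cartesian left additive structure on $\mathcal{D}[\mathbb{X}]$ is inherited pointwise from $\overline{\mathcal{D}}[\mathbb{X}]$, so the hardest part is merely confirming that each structural operation restricts correctly to $\mathsf{D}$-sequences, which was established in Lemmas \ref{Dlemma1} and \ref{Dcomp}.
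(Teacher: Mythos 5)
Your proposal is correct and matches the paper's argument, which simply says the lemma ``follows immediately from Lemma \ref{epsilonadd}''; your factorization through the inclusion $\mathcal{D}[\mathbb{X}] \hookrightarrow \overline{\mathcal{D}}[\mathbb{X}]$ merely makes explicit the restriction argument the paper leaves implicit. The additional spelled-out verifications are redundant but harmless, as they duplicate Lemma \ref{epsilonlemma1}.
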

\begin{proof} Follows immediately from Lemma \ref{epsilonadd}. 
\end{proof} 

To check that $\delta$ indeed maps $\mathsf{D}$-sequences to $\mathsf{D}$-sequences, we will need the following useful identities (which are straightforward to check): 

\begin{lemma}\label{ptanprop} The following equalities hold: 
\begin{enumerate}[{\em (i)}]
\item $\langle i_\bullet, 0_\bullet \rangle = i_\bullet \cdot \langle 1, 0 \rangle$;
\item $(i_\bullet \times (i_\bullet \cdot \pi_0) \times (i_\bullet \cdot \pi_1)) = i_\bullet \cdot (1 \times (\pi_0 + \pi_1))$; 
\item $\langle i_\bullet, 0_\bullet \rangle \times \langle 0_\bullet, i_\bullet \rangle = i_\bullet \cdot \ell$; 
\item $i_\bullet \times \langle i_\bullet \cdot \pi_1, i_\bullet \cdot \pi_0 \rangle \times i_\bullet = i_\bullet \cdot c$;
\end{enumerate}
\end{lemma}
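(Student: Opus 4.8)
The plan is to dispose of (i) directly and then treat (ii)--(iv) by a single uniform recipe, reducing each to an identity already recorded in Lemma \ref{astprop3} and Lemma \ref{astprop2}. Statement (i) is immediate: it is exactly Lemma \ref{astprop3} (vii) applied to $f_\bullet = i_\bullet$, which reads $i_\bullet \cdot \langle 1, 0\rangle = \langle i_\bullet, 0_\bullet\rangle$.

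For the remaining three, the key preliminary observation I would establish is that the identity pre-$\mathsf{D}$-sequence on any product decomposes as $i_\bullet = \langle i_\bullet \cdot \pi_0, i_\bullet \cdot \pi_1\rangle$; indeed, termwise $\langle i_\bullet\cdot\pi_0, i_\bullet\cdot\pi_1\rangle_n = \langle i_n\pi_0, i_n\pi_1\rangle = i_n\langle\pi_0,\pi_1\rangle = i_n$. Iterating this on $\mathsf{P}^2(A) = (A\times A)\times(A\times A)$ (and on $A\times(A\times A)$) lets me write $i_\bullet$ as a nested pairing of the coordinate sequences $i_\bullet \cdot \rho_j$. I would then apply the matching structural identity to the right-hand side: Lemma \ref{astprop3} (ix) for $\ell$ in (iii), Lemma \ref{astprop2} (viii) for $c$ in (iv), and Lemma \ref{astprop3} (viii) (together with Lemma \ref{astprop3} (iii) for the sum) for $1\times(\pi_0+\pi_1)$ in (ii). This rewrites each $i_\bullet \cdot (\text{natural transformation})$ as an explicit nested pairing of the $i_\bullet\cdot\rho_j$ and of $0_\bullet$'s.

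Independently I would expand the left-hand product. Using Lemma \ref{astprop2} (v) to convert each $\times$ into a pairing of scalar multiples, then commuting scalars past $i_\bullet$ via $h\cdot i_\bullet = i_\bullet\cdot h$ (Lemma \ref{astprop1} (i)), collapsing composites of scalars with Lemma \ref{scalarlem} (iii)--(iv) and Lemma \ref{astprop2} (i), and killing the $0_\bullet$-factors with Lemma \ref{astprop3} (i), each left-hand side reduces to the same nested pairing of the $i_\bullet\cdot\rho_j$. Matching the two expressions coordinate by coordinate finishes each case. In effect, once $i_\bullet$ is factored out, the content is the purely $\mathbb{X}$-level tautology that $\langle 1,0\rangle$, $\ell$, $c$, and $1\times(\pi_0+\pi_1)$ are precisely the corresponding rearrangements of projections.

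The main obstacle is purely the index/coordinate bookkeeping, and it is sharpest for (iv): the canonical flip $c = 1\times\langle\pi_1,\pi_0\rangle\times 1$ is presented with the grouping $A\times(A\times A)\times A$, whereas Lemma \ref{astprop2} (viii) records its action with the grouping $(A\times A)\times(A\times A)$, so I must first check that both describe the same coordinate swap (of positions $1$ and $2$), and then expand the three-fold product on the left consistently with that grouping. For (ii) the extra care is to track the additive structure, ensuring the sum produced by Lemma \ref{astprop3} (viii) lines up with the expansion of the left-hand side. Alternatively, all four identities can be verified by the internal method, computing the $n$-th terms directly and pulling $i_n$ out of each projection via the naturality $\mathsf{P}^n(\rho)\,i_n = i_n\,\rho$ of $i_n : \mathsf{P}^n \Rightarrow 1_{\mathbb{X}}$, which again isolates the same projection identities in $\mathbb{X}$.
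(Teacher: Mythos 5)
Your proposal is correct, and there is nothing in the paper to diverge from: Lemma \ref{ptanprop} is stated without proof (``which are straightforward to check''), so your write-up supplies the missing verification, using only tools the paper has already recorded. Both routes you describe go through. The external route is sound: (i) is literally Lemma \ref{astprop3} (vii) at $f_\bullet = i_\bullet$; your preliminary decomposition $i_\bullet = \langle i_\bullet \cdot \pi_0, i_\bullet \cdot \pi_1 \rangle$ is valid termwise, since $\langle i_n \pi_0, i_n \pi_1 \rangle = i_n \langle \pi_0, \pi_1 \rangle = i_n$, and iterating it and applying Lemma \ref{astprop3} (viii), (ix) and Lemma \ref{astprop2} (viii) rewrites each $i_\bullet \cdot \rho$ as the required nested pairing, while Lemma \ref{astprop2} (v), (i), Lemma \ref{astprop1} (i), and Lemma \ref{astprop3} (i), (iv) collapse the left-hand sides to the same expression. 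The internal route you mention is almost certainly what the author meant by ``straightforward'': each item reduces termwise to $\mathsf{P}^n(\rho)\, i_n = i_n \rho$ for $\rho \in \lbrace \langle 1, 0 \rangle,\ 1 \times (\pi_0 + \pi_1),\ \ell,\ c \rbrace$, by naturality of $i_n : \mathsf{P}^n \Rightarrow 1_{\mathbb{X}}$ together with left additivity (needed in (ii) to distribute $\mathsf{P}^n(\pi_1)$ over the sum, and $i_n 0 = 0$ in (i) and (iii)).

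Two of your bookkeeping worries are genuine and you resolve both correctly. First, item (ii) as printed, $i_\bullet \times (i_\bullet \cdot \pi_0) \times (i_\bullet \cdot \pi_1)$, is type-incorrect (a three-fold product would land in $A \times A \times A$, not $\mathsf{P}(A)$); it must be read as $i_\bullet \times \left( (i_\bullet \cdot \pi_0) + (i_\bullet \cdot \pi_1) \right)$, which is exactly the form used in the \textbf{[DS.2]} step of Lemma \ref{deltalemma3} --- your appeal to Lemma \ref{astprop3} (viii), whose output is a sum, shows you adopted the intended reading. Second, for (iv) the groupings $A \times (A \times A) \times A$ and $(A \times A) \times (A \times A)$ do describe the same swap of the two middle coordinates, so Lemma \ref{astprop2} (viii) applies after re-association; alternatively the internal computation $\langle \mathsf{P}^n(\rho_0) i_n, \mathsf{P}^n(\rho_1) \langle i_n \pi_1, i_n \pi_0 \rangle, \mathsf{P}^n(\rho_2) i_n \rangle = i_n \langle \rho_0, \rho_1 \langle \pi_1, \pi_0 \rangle, \rho_2 \rangle = i_n c$ sidesteps the re-association entirely. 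No gaps.
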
 

\begin{lemma}\label{deltalemma3} $\delta: \mathcal{D}[\mathbb{X}] \to \mathcal{D}\left[\mathcal{D}[\mathbb{X}] \right]$ is a strict Cartesian left additive functor. 
\end{lemma}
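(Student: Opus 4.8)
The plan is to build on Lemma~\ref{deltalemma1}, which already establishes that $\overline{\delta}$ is a strict Cartesian functor between the pre-$\mathsf{D}$-sequence categories, and Lemma~\ref{epsilonadd}, which gives that $\overline{\delta}$ preserves the additive structure. Since $\delta := \overline{\delta}$, the only genuinely new content is that $\delta$ restricts correctly: when $f_\bullet$ is a $\mathsf{D}$-sequence of $\mathbb{X}$, the image $\delta(f_\bullet)$ must be a $\mathsf{D}$-sequence of the category $\mathcal{D}[\mathbb{X}]$. Recall that $\delta(f_\bullet)_n = \mathsf{D}^n[f_\bullet]$ for $n \geq 1$ and $\delta(f_\bullet)_0 = f_\bullet$, so $\delta(f_\bullet)$ is a sequence whose terms are themselves pre-$\mathsf{D}$-sequences (namely $\mathsf{D}$-sequences). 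The task is therefore to verify axioms {\bf [DS.1]} to {\bf [DS.4]} for $\delta(f_\bullet)$, where now the scalar multiplications and the maps $\langle 1,0\rangle$, $(1\times(\pi_0+\pi_1))$, $\ell$, $c$ live \emph{inside} $\mathcal{D}[\mathbb{X}]$ rather than in $\mathbb{X}$.

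The key technical device is Lemma~\ref{ptanprop}, which expresses each of these structural maps of $\mathcal{D}[\mathbb{X}]$ as $i_\bullet$ scalar-multiplied by the corresponding structural map of $\mathbb{X}$: for instance $\langle i_\bullet, 0_\bullet\rangle = i_\bullet \cdot \langle 1,0\rangle$ and $i_\bullet \times \langle i_\bullet\cdot\pi_1, i_\bullet\cdot\pi_0\rangle \times i_\bullet = i_\bullet \cdot c$. Combined with Lemma~\ref{astprop1}~(v) and (vi), which let one collapse composites $(h\cdot i_\bullet)\ast f_\bullet = h\cdot f_\bullet$ and $f_\bullet \ast (i_\bullet\cdot k) = f_\bullet\cdot k$, these identities should convert each of the four $\mathsf{D}$-sequence axioms for $\delta(f_\bullet)$ (stated with $\mathcal{D}[\mathbb{X}]$-level operations) into the corresponding axiom for $f_\bullet$ stated at the $\mathbb{X}$-level. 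The other essential ingredient is the identity $\overline{\delta}\mathsf{T} = \mathsf{T}\overline{\delta}$ established within the proof of Lemma~\ref{deltalemma1}, together with the fact that applying $\mathsf{D}^m$ to the $n$-th term of $\delta(f_\bullet)$ just shifts indices, since $\mathsf{D}^m[f_\bullet]$ has $n$-th term $f_{m+n}$.

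Concretely, I would check each axiom as follows. For {\bf [DS.1]} of $\delta(f_\bullet)$, I would rewrite the $\mathcal{D}[\mathbb{X}]$-scalar multiplication by $\langle 1,0\rangle$ using Lemma~\ref{ptanprop}~(i) as composition with $i_\bullet\cdot\langle 1,0\rangle$, use Lemma~\ref{astprop1}~(vi) to pull out the $\langle 1,0\rangle$, and recognize that the resulting term is precisely $\langle 1,0\rangle \cdot \mathsf{D}^{n+1}[f_\bullet]$, which vanishes by {\bf [DS.1]} for $f_\bullet$ itself. The axioms {\bf [DS.3]} and {\bf [DS.4]} follow the same pattern using Lemma~\ref{ptanprop}~(iii) and (iv) respectively, reducing to {\bf [DS.3]} and {\bf [DS.4]} for $f_\bullet$; the vertical-lift and flip axioms are the cleanest because $\ell$ and $c$ are linear. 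Axiom {\bf [DS.2]} is handled with Lemma~\ref{ptanprop}~(ii) together with Proposition~\ref{Diff2}~(ii) (additivity of $\mathsf{D}$) to distribute over the sum, reducing to {\bf [DS.2]} for $f_\bullet$. It may be cleaner to verify the equivalent primed axioms {\bf [DS.$1^\prime$]}–{\bf [DS.$4^\prime$]} for $\delta(f_\bullet)$ instead, as was done in the proof of Lemma~\ref{DFunctor2}.

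The main obstacle I anticipate is purely bookkeeping: one must track two levels of structure simultaneously, since the ``maps'' of $\mathcal{D}[\mathbb{X}]$ are themselves sequences, so $\delta(f_\bullet)$ is a sequence of sequences and every equation carries double indexing. The risk is conflating the scalar multiplication $\cdot$ of $\overline{\mathcal{D}}[\mathbb{X}]$ with the scalar multiplication $\odot$ of $\mathcal{D}[\mathcal{D}[\mathbb{X}]]$, and mis-tracking which $\mathsf{T}$ or $\mathsf{D}$ acts at which level. The notational conventions introduced in the proof of Lemma~\ref{deltalemma1} (using $\odot$, $\star$, and $I_\bullet$ for the inner category) are exactly what is needed to keep this straight, and I would adopt them. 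Once the translation via Lemma~\ref{ptanprop} is set up carefully, each axiom reduces in one or two lines to the already-assumed axiom for $f_\bullet$, so no essentially new computation is required — the content is entirely in correctly interpreting the $\mathcal{D}[\mathbb{X}]$-level operations.
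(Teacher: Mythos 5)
Your proposal is correct and follows essentially the same route as the paper's proof: reduce the new content to showing $\delta(f_\bullet)$ is a $\mathsf{D}$-sequence in $\mathcal{D}[\mathbb{X}]$ (functoriality and additivity being inherited from Lemmas \ref{deltalemma1} and \ref{epsilonadd}), adopt the $\odot$/$\star$/$I_\bullet$ notation, translate the structural maps of $\mathcal{D}[\mathbb{X}]$ via Lemma \ref{ptanprop}, and collapse the scalar multiplications so that each of {\bf [DS.1]}--{\bf [DS.4]} for $\delta(f_\bullet)$ reduces, after the index shift $\mathsf{D}^{n+q}[\delta(f_\bullet)]_m = \mathsf{D}^{n+m+q}[f_\bullet]$, to the corresponding axiom for $f_\bullet$. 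The only organizational difference is that the paper packages your collapsing steps into one preliminary identity, $\left( (i_\bullet \cdot k) \odot \mathsf{D}^{n+q}[\delta(f_\bullet)] \right)_m = \mathsf{D}^{m}\!\left[k \cdot \mathsf{D}^{n+q}[f_\bullet] \right]$, derived from $\mathsf{P}(i_\bullet \cdot k)=\mathsf{T}(i_\bullet \cdot k)$ and the higher-order version of Proposition \ref{Diff1} (iv), and then verifies the unprimed axioms directly rather than the primed variants you mention as an alternative.
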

\begin{proof} We will use the same notation for $\mathcal{D}\left[\mathcal{D}[\mathbb{X}] \right]$ as introduced in the proof of Lemma \ref{deltalemma1}. Using that $\mathsf{P}(i_\bullet \cdot k)=\mathsf{T}(i_\bullet \cdot k)$ (as shown in the proof of Lemma \ref{deltalemma1}) and the higher order version of Proposition \ref{Diff1} (iv), we have that: 
\begin{align*}
\left( (i_\bullet \cdot k) \odot \mathsf{D}^{n+q}[\delta(f_\bullet)] \right)_m &=~ \mathsf{P}^m(i_\bullet \cdot k) \ast \mathsf{D}^{n+q}[\delta(f_\bullet)]_m \\
&=~ \mathsf{P}^m(i_\bullet \cdot k) \ast \delta(f_\bullet)_{n+m+q} \\
&=~  \mathsf{P}^m(i_\bullet \cdot k) \ast \mathsf{D}^{n+m+q}[f_\bullet] \\
&=~ \mathsf{D}^{m}\left[(i_\bullet \cdot k) \ast \mathsf{D}^{n+q}[f] \right] \\
&=~ \mathsf{D}^{m}\left[k \cdot \mathsf{D}^{n+q}[f] \right] 
\end{align*}
Using the identities of Lemma \ref{ptanprop} and this above identity, we check {\bf [DS.1]} to {\bf [DS.4]}. \\Ê\\
{\bf [DS.1]} Here we use Lemma \ref{ptanprop} (i) and {\bf [DS.1]} for $f_\bullet$: 
\begin{align*}
\left( \langle i_\bullet, 0_\bullet \rangle \odot \mathsf{D}^{n+1}[\delta(f_\bullet)] \right)_m &=~ \left( (i_\bullet \cdot \langle 1, 0 \rangle) \odot \mathsf{D}^{n+1}[\delta(f_\bullet)] \right)_m\\
&=~ \mathsf{D}^{m}\left[\langle 1, 0 \rangle \cdot \mathsf{D}^{n+1}[f] \right] \\
&=~ \mathsf{D}^m[0_\bullet] \\
&=~ 0_\bullet 
\end{align*}
{\bf [DS.2]} Here we use Lemma \ref{ptanprop} (ii) and {\bf [DS.2]} for $f_\bullet$: 
\begin{align*}
&\left( \left( i_\bullet \times \left( (i_\bullet \cdot \pi_0) + (i_\bullet \cdot \pi_1) \right) \right) \odot \mathsf{D}^{n+1}[\delta(f_\bullet)] \right)_m =~Ê\left( \left( i_\bullet \cdot (1 \times (\pi_0 + \pi_1)) \right) \odot \mathsf{D}^{n+1}[\delta(f_\bullet)] \right)_m \\
&=~  \mathsf{D}^{m}\left[(1 \times (\pi_0 + \pi_1)) \cdot \mathsf{D}^{n+1}[f_\bullet] \right] \\
&=~ \mathsf{D}^m\left[ (1 \times \pi_0) \cdot \mathsf{D}^{n+1}[f_\bullet] + (1 \times \pi_1) \cdot \mathsf{D}^{n+1}[f_\bullet] \right] \\
&=~ \mathsf{D}^m\left[ (1 \times \pi_0) \cdot \mathsf{D}^{n+1}[f_\bullet] \right] + \mathsf{D}^m\left[ (1 \times \pi_1) \cdot \mathsf{D}^{n+1}[f_\bullet] \right] \\
&=~Ê\left( \left( i_\bullet \cdot (1 \times \pi_0) \right) \odot \mathsf{D}^{n+1}[\delta(f_\bullet)] \right)_m + \left( \left( i_\bullet \cdot (1 \times \pi_1) \right) \odot \mathsf{D}^{n+1}[\delta(f_\bullet)] \right)_m\\
&=~\left( \left( i_\bullet \times (i_\bullet \cdot \pi_0) \right)  \odot \mathsf{D}^{n+1}[\delta(f_\bullet)] \right)_m + \left( \left( i_\bullet \times (i_\bullet \cdot \pi_1) \right)  \odot \mathsf{D}^{n+1}[\delta(f_\bullet)] \right)_m \\
&=~Ê\left( \left( i_\bullet \times (i_\bullet \cdot \pi_0) \right)  \odot \mathsf{D}^{n+1}[\delta(f_\bullet)]  + \left( i_\bullet \times (i_\bullet \cdot \pi_1) \right)  \odot \mathsf{D}^{n+1}[\delta(f_\bullet)] \right)_m
\end{align*}
{\bf [DS.3]} Here we use Lemma \ref{ptanprop} (iii) and {\bf [DS.3]} for $f_\bullet$: 
\begin{align*}
\left( \left( \langle i_\bullet, 0_\bullet \rangle \times \langle 0_\bullet, i_\bullet \rangle \right) \odot \mathsf{D}^{n+2}[\delta(f_\bullet)] \right)_m &=~ \left( (i_\bullet \cdot \ell) \odot \mathsf{D}^{n+2}[\delta(f_\bullet)] \right)_m \\
&=~ \mathsf{D}^{m}\left[\ell \cdot \mathsf{D}^{n+2}[f_\bullet] \right] \\
&=~ \mathsf{D}^m[\mathsf{D}^{n+1}[f_\bullet] ] \\
&=~ \mathsf{D}^{n+m+1}[f_\bullet] \\
&=~ \delta(f_\bullet)_{n+m+1} \\
&=~ \left(\mathsf{D}^{n+1}[\delta(f_\bullet)] \right)_m
\end{align*}
{\bf [DS.4]} Here we use Lemma \ref{ptanprop} (iv) and {\bf [DS.4]} for $f_\bullet$: 
\begin{align*}
\left( \left( i_\bullet \times \langle i_\bullet \cdot \pi_1, i_\bullet \cdot \pi_0 \rangle \times i_\bullet \right) \odot \mathsf{D}^{n+2}[\delta(f_\bullet)] \right)_m &=~ \left( (i_\bullet \cdot c) \odot \mathsf{D}^{n+2}[\delta(f_\bullet)] \right)_m \\
&=~ \mathsf{D}^{m}\left[c \cdot \mathsf{D}^{n+2}[f_\bullet] \right] \\
&=~ \mathsf{D}^m[\mathsf{D}^{n+2}[f_\bullet] ] \\
&=~ \mathsf{D}^{n+m+2}[f_\bullet] \\
&=~ \delta(f_\bullet)_{n+m+2} \\
&=~ \left(\mathsf{D}^{n+2}[\delta(f_\bullet)] \right)_m
\end{align*}
\end{proof} 

\begin{lemma} $\varepsilon: \mathcal{D} \Rightarrow 1_{\mathsf{CLAC}}$ and $\delta: \mathcal{D} \Rightarrow \mathcal{D}\mathcal{D}$ are both natural transformations. 
\end{lemma}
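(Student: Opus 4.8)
The plan is to reduce everything to the pre-$\mathsf{D}$-sequence case already settled in Lemma \ref{natepsilon} and Lemma \ref{natdelta}. The key structural observation is that, by definition, the functor $\mathcal{D}[\mathsf{F}]$, the counit $\varepsilon$, and the comultiplication $\delta$ are nothing but the restrictions of $\overline{\mathcal{D}}[\mathsf{F}]$, $\overline{\varepsilon}$, and $\overline{\delta}$ to the Cartesian left additive subcategories of $\mathsf{D}$-sequences. Their well-definedness as strict Cartesian left additive functors between these subcategories is exactly the content of Lemma \ref{DFunctor2}, Lemma \ref{epsilonlemma3}, and Lemma \ref{deltalemma3} respectively, so the only thing left to establish is commutativity of the two naturality squares.

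First I would recall that a strict Cartesian left additive functor is in particular a strict Cartesian functor, so for any morphism $\mathsf{F}: \mathbb{X} \to \mathbb{Y}$ of $\mathsf{CLAC}$ the naturality squares for $\overline{\varepsilon}$ and $\overline{\delta}$ (proved in Lemma \ref{natepsilon} and Lemma \ref{natdelta}) already hold in $\mathsf{CART}$. Then I would verify the $\mathcal{D}$-versions by evaluating each square on an arbitrary $\mathsf{D}$-sequence $f_\bullet$, noting that a $\mathsf{D}$-sequence is in particular a pre-$\mathsf{D}$-sequence and that all four functors appearing in each square agree on it with their bar-counterparts. For the counit this gives $\varepsilon(\mathcal{D}[\mathsf{F}](f_\bullet)) = \mathsf{F}(f_0) = \mathsf{F}(\varepsilon(f_\bullet))$, and for the comultiplication the double-indexed computation $\left(\mathcal{D}\!\left[\mathcal{D}[\mathsf{F}]\right](\delta(f_\bullet))_n\right)_m = \mathsf{F}(f_{n+m}) = \left(\delta(\mathcal{D}[\mathsf{F}](f_\bullet))_n\right)_m$ --- that is, exactly the identities already checked in Lemma \ref{natepsilon} and Lemma \ref{natdelta}, now simply read off inside the subcategory.

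There is essentially no genuine obstacle here: naturality is an equation between composites of functors, and since the inclusion of $\mathsf{D}$-sequences into pre-$\mathsf{D}$-sequences is faithful and each relevant functor restricts along it, the equation descends to the subcategory for free. The only point requiring any care --- and it is precisely the point already handled by the preceding lemmas --- is confirming that $\varepsilon$, $\delta$, and $\mathcal{D}[\mathsf{F}]$ genuinely land among $\mathsf{D}$-sequences, i.e. that the codomains of these restricted functors are the correct subcategories; once this closure is in hand the naturality is immediate.
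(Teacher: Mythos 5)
Your proposal is correct and follows essentially the same route as the paper: well-definedness of $\varepsilon$ and $\delta$ on $\mathsf{D}$-sequences is supplied by Lemma \ref{epsilonlemma3} and Lemma \ref{deltalemma3} (together with Lemma \ref{DFunctor2} for $\mathcal{D}[\mathsf{F}]$), and the naturality squares are inherited from Lemma \ref{natepsilon} and Lemma \ref{natdelta} by restriction to the subcategory of $\mathsf{D}$-sequences. The paper's proof is exactly this two-line citation; your additional remark about the faithful inclusion merely makes explicit why the equations descend.
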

\begin{proof} $\varepsilon$ and $\delta$ are well defined by Lemma \ref{epsilonlemma3} and Lemma \ref{deltalemma3}, while their naturality were shown in Lemma \ref{natepsilon} and Lemma \ref{natdelta}. 
\end{proof} 

Finally we obtain the desired comonad on $\mathsf{CLAC}$: 

\begin{proposition}\label{predcom3} $(\mathcal{D}, \delta, \varepsilon)$ is a comonad on $\mathsf{CLAC}$.
\end{proposition}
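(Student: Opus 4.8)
The plan is to deduce the comonad laws for $(\mathcal{D}, \delta, \varepsilon)$ directly from the corresponding laws for the ambient comonad $(\overline{\mathcal{D}}, \overline{\delta}, \overline{\varepsilon})$ on $\mathsf{CLAC}$ established in Proposition \ref{predcom2}, exploiting the fact that by Definition \ref{Dcat} each $\mathcal{D}[\mathbb{X}]$ is a (sub-Cartesian left additive) subcategory of $\overline{\mathcal{D}}[\mathbb{X}]$ and that, by construction, $\varepsilon := \overline{\varepsilon}$ and $\delta := \overline{\delta}$ are the restrictions of $\overline{\varepsilon}$ and $\overline{\delta}$ to $\mathsf{D}$-sequences.

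First I would record that all the structural data is already in place. The functor $\mathcal{D}\colon \mathsf{CLAC} \to \mathsf{CLAC}$ is well defined by the preceding lemma, and $\varepsilon\colon \mathcal{D} \Rightarrow 1_{\mathsf{CLAC}}$ and $\delta\colon \mathcal{D} \Rightarrow \mathcal{D}\mathcal{D}$ are natural transformations by Lemma \ref{epsilonlemma3} and Lemma \ref{deltalemma3} together with the naturality verified just above. The essential content here is Lemma \ref{deltalemma3}, which guarantees that $\delta$ actually lands in $\mathcal{D}\left[\mathcal{D}[\mathbb{X}]\right]$ and not merely in $\overline{\mathcal{D}}\left[\overline{\mathcal{D}}[\mathbb{X}]\right]$; likewise Lemma \ref{epsilonlemma3} for $\varepsilon$. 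Thus the two counit triangles and the coassociativity square all have the correct type within the world of $\mathsf{D}$-sequences.

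It then remains only to check that these three coherence diagrams commute, and here I would simply observe that each is computed, on objects and on $\mathsf{D}$-sequences, by exactly the same formulas as in the proof of Proposition \ref{predcom1}. Since every $\mathsf{D}$-sequence is in particular a pre-$\mathsf{D}$-sequence, and since $\varepsilon$, $\delta$, $\mathcal{D}[\varepsilon]$, and $\mathcal{D}[\delta]$ act on a $\mathsf{D}$-sequence precisely as $\overline{\varepsilon}$, $\overline{\delta}$, $\overline{\mathcal{D}}[\overline{\varepsilon}]$, and $\overline{\mathcal{D}}[\overline{\delta}]$ act on the underlying pre-$\mathsf{D}$-sequence, the identities $\overline{\varepsilon}(\overline{\delta}(f_\bullet)) = f_\bullet$, $\overline{\mathcal{D}}[\overline{\varepsilon}](\overline{\delta}(f_\bullet)) = f_\bullet$, and the coassociativity equation already verified in Proposition \ref{predcom1} restrict verbatim to $\mathsf{D}$-sequences. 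I expect there to be no genuine obstacle at this stage: all the real work has been carried out in the preceding well-definedness lemmas (Lemmas \ref{DFunctor2}, \ref{epsilonlemma3}, and \ref{deltalemma3}, which verify that $\mathcal{D}[\mathsf{F}]$, $\varepsilon$, and $\delta$ preserve the axioms \textbf{[DS.1]}--\textbf{[DS.4]}), and once these are in hand the comonad laws are inherited "for free" from Proposition \ref{predcom2} because the inclusion $\mathcal{D}[\mathbb{X}] \hookrightarrow \overline{\mathcal{D}}[\mathbb{X}]$ is faithful and the structure maps are restrictions. The proof therefore reduces to a single invocation of Proposition \ref{predcom2}.
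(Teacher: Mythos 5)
Your proposal is correct and follows essentially the same route as the paper, whose proof of Proposition \ref{predcom3} is precisely the one-line observation that the comonad laws follow immediately from Proposition \ref{predcom2} once the restriction lemmas (Lemmas \ref{DFunctor2}, \ref{epsilonlemma3}, and \ref{deltalemma3}) guarantee everything is well defined on $\mathsf{D}$-sequences. Your version merely spells out in more detail why the coherence diagrams restrict verbatim, which is exactly the reasoning the paper leaves implicit.
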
 
\begin{proof} That $(\mathcal{D}, \delta, \varepsilon)$ is a comonad follows immediately from Proposition \ref{predcom2}. 
\end{proof} 

\subsection{Cofree Cartesian Differential Categories}

In this section we prove the main result of this paper: that $\mathcal{D}$-coalgebras of the comonad $(\mathcal{D}, \delta, \varepsilon)$ are precisely Cartesian differential categories, or in other words, the category of $\mathcal{D}$-coalgebras is equivalent to the category of Cartesian differential categories. This then implies that for a Cartesian left additive category $\mathbb{X}$, its category of $\mathsf{D}$-sequences $\mathcal{D}[\mathbb{X}]$ is indeed the cofree Cartesian differential category over $\mathbb{X}$. 

Recall that a $\mathcal{D}$-coalgebra is a pair $(\mathbb{X}, \omega)$ consisting of a Cartesian left additive category $\mathbb{X}$ and strict Cartesian left additive functor $\omega: \mathbb{X} \to \mathcal{D}[\mathbb{X}]$ such that the following diagrams commute: 
  \[  \xymatrixcolsep{5pc}\xymatrix{\mathbb{X} \ar@{=}[dr]^-{} \ar[r]^-{\omega} & \mathcal{D}[\mathbb{X}] \ar[d]^-{\varepsilon} & \mathbb{X} \ar[d]_-{\omega} \ar[r]^-{\omega} & \mathcal{D}[\mathbb{X}] \ar[d]^-{\delta} \\
  & \mathbb{X} & \mathcal{D}[\mathbb{X}] \ar[r]_-{\mathcal{D}[\omega]} & \mathcal{D}\left[ \mathcal{D}[\mathbb{X}] \right]
  } \]
  And that a $\mathcal{D}$-coalgebra morphism $\mathsf{F}: (\mathbb{X}, \omega) \to (\mathbb{Y}, \omega^\prime)$ is a strict Cartesian left additive functor $\mathsf{F}: \mathbb{X} \to \mathbb{Y}$ such that the following diagram commutes: 
    \[  \xymatrixcolsep{5pc}\xymatrix{\mathbb{X} \ar[d]_-{\omega} \ar[r]^-{\mathsf{F}} & \mathbb{Y} \ar[d]^-{\omega^\prime} \\
     \mathcal{D}[\mathbb{X}] \ar[r]_-{\mathcal{D}[\mathsf{F}]} & \mathcal{D}[\mathbb{Y}]
  } \]
  
We start by showing that every Cartesian differential category is a  $\mathcal{D}$-coalgebra. Let $\mathbb{X}$ be a Cartesian differential category with differential combinator $\mathsf{D}$ (the author apologizes in advance for the repetitive notation). Define the functor $\omega^{\mathsf{D}}: \mathbb{X} \to  \mathcal{D}[\mathbb{X}]$ on objects as $\omega^{\mathsf{D}}(A) := A$ and for maps $f$, $\omega^{\mathsf{D}}(f)_\bullet$ is the $\mathsf{D}$-sequence defined as $\omega^{\mathsf{D}}(f)_n := \mathsf{D}^n[f]$ for $n \geq 1$ and $\omega^{\mathsf{D}}(f)_0 = f$. Note that $\omega^{\mathsf{D}}(f)_\bullet$ is precisely the intuition we gave for $\mathsf{D}$-sequences. 

\begin{lemma}\label{omegalemma} $\omega^{\mathsf{D}}(f)_\bullet$ is a $\mathsf{D}$-sequence of $\mathbb{X}$.
\end{lemma}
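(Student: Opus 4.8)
The plan is to show that the sequence $\omega^{\mathsf{D}}(f)_n = \mathsf{D}^n[f]$ satisfies the primed axioms $\textbf{[DS.$1^\prime$]}$ through $\textbf{[DS.$4^\prime$]}$ for all $n \in \mathbb{N}$ and $k \le n$, rather than $\textbf{[DS.1]}$ through $\textbf{[DS.4]}$ directly; these primed conditions are stated in terms of the individual terms $f_{n+1}, f_{n+2}$ of the sequence and so interact cleanly with the higher-order chain rule (\ref{highchainrule}). The statement then follows from the Proposition establishing the equivalence of $\textbf{[DS.1]}$--$\textbf{[DS.4]}$ with $\textbf{[DS.$1^\prime$]}$--$\textbf{[DS.$4^\prime$]}$.

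The crucial preliminary step is to observe that each structural map occurring in the axioms, namely $\langle 1, 0\rangle$, $1 \times (\pi_0 + \pi_1)$, $1 \times \pi_0$, $1 \times \pi_1$, $\ell$, and $c$, is linear in $\mathbb{X}$ in the sense of Definition \ref{cartdiffdef}. Each is assembled from identities, projections, zero maps, and sums using pairings and products, so I would first record that identities, projections and zero maps are linear (by $\textbf{[CD.3]}$ and $\textbf{[CD.1]}$), and that linearity is preserved by composition, by sums, by pairings and by products (each a short computation from $\textbf{[CD.1]}$, $\textbf{[CD.4]}$, $\textbf{[CD.5]}$), whence all six maps are linear. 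By Proposition \ref{tangentfunctorprop} this gives $\mathsf{T}(h) = \mathsf{P}(h)$ for each such $h$, and hence $\mathsf{T}^k(h) = \mathsf{P}^k(h)$ by iteration; this is exactly what turns the $\mathsf{P}^k(h)$ appearing in the primed axioms into $\mathsf{T}^k(h)$ so that the chain rule applies.

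The engine of the verification is then the identity $\mathsf{P}^k(h)\,\mathsf{D}^k[g] = \mathsf{T}^k(h)\,\mathsf{D}^k[g] = \mathsf{D}^k[hg]$, valid for any linear $h$ and composable $g$, which is the higher-order chain rule (\ref{highchainrule}) combined with the linearity just established. For fixed $n$ and $k \le n$ I would split the index as $n+1 = k + (n-k+1)$ (or $n+2 = k + (n-k+2)$), peel off the outer $\mathsf{D}^k$ via this identity, apply the corresponding \emph{base} axiom to the innermost differentials, and recombine. Concretely, for $\textbf{[DS.$1^\prime$]}$ one obtains $\mathsf{P}^k(\langle 1,0\rangle)\mathsf{D}^{n+1}[f] = \mathsf{D}^k[\langle 1,0\rangle\,\mathsf{D}^{n-k+1}[f]]$, and since $\langle 1,0\rangle\,\mathsf{D}[\mathsf{D}^{n-k}[f]] = 0$ by $\textbf{[CD.2]}$ and $\mathsf{D}^k[0] = 0$ by $\textbf{[CD.1]}$, the left side vanishes; for $\textbf{[DS.$3^\prime$]}$ and $\textbf{[DS.$4^\prime$]}$ one proceeds identically, applying $\ell\,\mathsf{D}^2[g] = \mathsf{D}[g]$ ($\textbf{[CD.6]}$) and $c\,\mathsf{D}^2[g] = \mathsf{D}^2[g]$ ($\textbf{[CD.7]}$) to the inner two differentials; and for $\textbf{[DS.$2^\prime$]}$ one applies the first part of $\textbf{[CD.2]}$ to the innermost differential and then uses that $\mathsf{D}^k$ preserves sums (again $\textbf{[CD.1]}$) to distribute $\mathsf{D}^k$ across the two-term sum before reversing the chain rule on each summand.

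The main obstacle, such as it is, lies in the index bookkeeping: one must use the hypothesis $k \le n$ to guarantee that after removing the outer $\mathsf{D}^k$ there remain at least one derivative (for $\textbf{[DS.$1^\prime$]}$, $\textbf{[DS.$2^\prime$]}$) or at least two (for $\textbf{[DS.$3^\prime$]}$, $\textbf{[DS.$4^\prime$]}$), which is precisely the regime in which the base axioms $\textbf{[CD.2]}$, $\textbf{[CD.6]}$ and $\textbf{[CD.7]}$ can be invoked. Beyond this, the only other point demanding attention is the linearity of the six structural maps, since it is linearity that collapses $\mathsf{T}^k$ to $\mathsf{P}^k$ and makes the whole reduction go through.
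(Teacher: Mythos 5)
Your proposal is correct and follows essentially the same route as the paper's proof: both verify the primed axioms \textbf{[DS.$1^\prime$]}--\textbf{[DS.$4^\prime$]} by observing that $\langle 1,0\rangle$, $1 \times (\pi_0 + \pi_1)$, $1 \times \pi_i$, $\ell$, and $c$ are linear, so that $\mathsf{T}^k(h) = \mathsf{P}^k(h)$ by Proposition \ref{tangentfunctorprop}, and then peel off the outer $\mathsf{D}^k$ via the higher-order chain rule (\ref{highchainrule}) before applying \textbf{[CD.1]}, \textbf{[CD.2]}, \textbf{[CD.6]}, and \textbf{[CD.7]} to the inner differentials. Your explicit remarks on closure of linear maps under composition, sums, pairings, and products, and on the index bookkeeping enforced by $k \le n$, merely spell out details the paper leaves implicit.
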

\begin{proof} The proof follows the same arguments that was provided when giving intuition for the $\mathsf{D}$-sequence axioms {\bf [DS.$1^\prime$]} to {\bf [DS.$4^\prime$]}. The key is that $\langle 1, 0 \rangle$, $\left (1 \times (\pi_0 + \pi_1) \right)$, $1 \times \pi_i$, $\ell$, and $c$ are all linear in the Cartesian differential category sense. And recall that for a linear map $h$, by Proposition \ref{tangentfunctorprop}, we have that $\mathsf{T}(h)= h \times h = \mathsf{P}(h)$. Then using the higher-order chain rule that $\mathsf{D}^n[fg] = \mathsf{T}^n(f) \mathsf{D}^n[g]$, we can easily check {\bf [DS.$1^\prime$]} to {\bf [DS.$4^\prime$]}: \\Ê\\Ê
{\bf [DS.$1^\prime$]}: Here we use \textbf{[CD.2]}, that $\langle 1,0 \rangle$ is linear, and the higher-order chain rule \textbf{[CD.5]}: 
\begin{align*}
\mathsf{P}^k(\langle 1, 0 \rangle) \omega^{\mathsf{D}}(f)_{n+1} &=~ \mathsf{T}^k(\langle 1, 0 \rangle) \mathsf{D}^{n+1}[f] \\
&=~ \mathsf{D}^{k}\left[ \langle 1, 0 \rangle \mathsf{D}^{n+1-k}[f] \right] \\
&=~  \mathsf{D}^{k}\left[ \langle 1, 0 \rangle \mathsf{D}\left[\mathsf{D}^{n-k}[f] \right] \right] \\
&=~ \mathsf{D}^k[0] \\
&=~ 0
\end{align*}
{\bf [DS.$2^\prime$]}: Here we again use \textbf{[CD.2]}, that $\left (1 \times (\pi_0 + \pi_1) \right)$ and $1 \times \pi_i$ are linear, and the higher-order chain rule \textbf{[CD.5]}: 
\begin{align*}
\mathsf{P}^k\left(1 \times (\pi_0 + \pi_1) \right) \omega^{\mathsf{D}}(f)_{n+1} &=~ \mathsf{T}^k\left(1 \times (\pi_0 + \pi_1) \right) \mathsf{D}^{n+1}[f] \\
&=~  \mathsf{D}^{k}\left[ \left(1 \times (\pi_0 + \pi_1) \right) \mathsf{D}^{n+1-k}[f] \right] \\
&=~Ê \mathsf{D}^{k}\left[ \left(1 \times (\pi_0 + \pi_1) \right) \mathsf{D}\left[\mathsf{D}^{n-k}[f] \right] \right]  \\Ê
&=~Ê \mathsf{D}^{k}\left[ (1 \times \pi_0) \mathsf{D}\left[\mathsf{D}^{n-k}[f] \right] + (1 \times \pi_1) \mathsf{D}\left[\mathsf{D}^{n-k}[f] \right] \right]  \\Ê
&=~Ê \mathsf{D}^{k}\left[ (1 \times \pi_0) \mathsf{D}\left[\mathsf{D}^{n-k}[f] \right] \right] +\mathsf{D}^{k}\left[ (1 \times \pi_1) \mathsf{D}\left[\mathsf{D}^{n-k}[f] \right] \right]  \\Ê
&=~Ê  \mathsf{T}^k\left(1 \times \pi_0 \right) \mathsf{D}^{k}\left[ \mathsf{D}\left[\mathsf{D}^{n-k}[f] \right] \right] +\mathsf{T}^k\left(1 \times \pi_1 \right) \mathsf{D}^{k}\left[ \mathsf{D}\left[\mathsf{D}^{n-k}[f] \right] \right]   \\Ê
&=~Ê  \mathsf{P}^k\left(1 \times \pi_0 \right)  \omega^{\mathsf{D}}(f)_{n+1}+\mathsf{P}^k\left(1 \times \pi_1 \right)  \omega^{\mathsf{D}}(f)_{n+1}  
\end{align*}
{\bf [DS.$3^\prime$]}: Here we use \textbf{[CD.6]}, that $\ell$ is linear, and the higher-order chain rule \textbf{[CD.5]}: 
\begin{align*}
\mathsf{P}^k(\ell) \omega^{\mathsf{D}}(f)_{n+2} &=~ \mathsf{T}^k(\ell) \mathsf{D}^{n+2}[f] \\
&=~ \mathsf{D}^{k}\left[ \ell \mathsf{D}^{n+2-k}[f] \right] \\
&=~  \mathsf{D}^{k}\left[ \ell \mathsf{D}^2\left[\mathsf{D}^{n-k}[f] \right] \right] \\
&=~  \mathsf{D}^{k}\left[ \mathsf{D}\left[\mathsf{D}^{n-k}[f] \right] \right] \\
&=~ \omega^{\mathsf{D}}(f)_{n+1} 
\end{align*}
{\bf [DS.$4^\prime$]}: Here we use \textbf{[CD.7]}, that $c$ is linear, and the higher-order chain rule \textbf{[CD.5]}: 
\begin{align*}
\mathsf{P}^k(c) \omega^{\mathsf{D}}(f)_{n+2} &=~ \mathsf{T}^k(c) \mathsf{D}^{n+2}[f] \\
&=~ \mathsf{D}^{k}\left[ c \mathsf{D}^{n+2-k}[f] \right]\\
&=~  \mathsf{D}^{k}\left[ c \mathsf{D}^2\left[\mathsf{D}^{n-k}[f] \right] \right] \\
&=~  \mathsf{D}^{k}\left[ \mathsf{D}^2\left[\mathsf{D}^{n-k}[f] \right] \right]\\
&=~ \omega^{\mathsf{D}}(f)_{n+2}
\end{align*}
\end{proof} 

\begin{lemma}\label{etalemma1} $\omega^{\mathsf{D}}: \mathbb{X} \to  \mathcal{D}[\mathbb{X}]$ is a strict Cartesian left additive functor.  
\end{lemma}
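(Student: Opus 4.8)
The plan is to verify the three classes of requirements in turn: functoriality, preservation of the finite-product structure, and preservation of the additive structure. Well-definedness on morphisms—i.e. that $\omega^{\mathsf{D}}(f)_\bullet$ really is a $\mathsf{D}$-sequence—is already handled by Lemma \ref{omegalemma}, so what remains is purely the preservation of the operations, checked componentwise using $\omega^{\mathsf{D}}(f)_n = \mathsf{D}^n[f]$.

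A preliminary observation that I would record first, and which streamlines nearly everything else, is that for a linear map $h$ one has $\mathsf{D}^n[h] = i_n h$ where $i_n = \underbrace{\pi_1 \cdots \pi_1}_{n}$. This follows by induction from the defining equation $\mathsf{D}[h] = \pi_1 h$ of linearity together with the fact that linear maps and projections are closed under composition. Applying this with $h = 1$ and \textbf{[CD.3]} gives $\omega^{\mathsf{D}}(1)_n = \mathsf{D}^n[1] = i_n$, so $\omega^{\mathsf{D}}(1) = i_\bullet$; applying it with $h = \pi_j$ and \textbf{[CD.3]} gives $\omega^{\mathsf{D}}(\pi_j)_n = i_n\pi_j = (i_\bullet \cdot \pi_j)_n$, so $\omega^{\mathsf{D}}$ sends each projection to the corresponding projection of $\mathcal{D}[\mathbb{X}]$ (Proposition \ref{preDprod}). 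Since $\omega^{\mathsf{D}}$ is the identity on objects and the products and terminal object of $\mathcal{D}[\mathbb{X}]$ agree with those of $\mathbb{X}$, this establishes that $\omega^{\mathsf{D}}$ is a strict Cartesian functor. Preservation of the additive structure is equally direct: iterating \textbf{[CD.1]} gives $\mathsf{D}^n[f+g] = \mathsf{D}^n[f] + \mathsf{D}^n[g]$ and $\mathsf{D}^n[0]=0$, i.e. $\omega^{\mathsf{D}}(f+g) = \omega^{\mathsf{D}}(f)+\omega^{\mathsf{D}}(g)$ and $\omega^{\mathsf{D}}(0) = 0_\bullet$.

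The one genuinely substantial point is preservation of composition, $\omega^{\mathsf{D}}(fg) = \omega^{\mathsf{D}}(f) \ast \omega^{\mathsf{D}}(g)$. The difficulty is that the $\ast$-composition (\ref{predcomp}) is phrased through the pre-$\mathsf{D}$-sequence tangent $\mathsf{T}$ of Definition \ref{tandiffseq}, whereas the higher-order chain rule (\ref{highchainrule}) is phrased through the CDC tangent functor $\mathsf{T}$ of Proposition \ref{tangentfunctorprop}; the heart of the proof is to reconcile these. I would first prove the compatibility $\mathsf{T}(\omega^{\mathsf{D}}(f)) = \omega^{\mathsf{D}}(\mathsf{T}(f))$ (pre-$\mathsf{D}$-tangent on the left, CDC tangent on the right): unwinding the left side gives $\mathsf{T}(\omega^{\mathsf{D}}(f))_n = \langle \mathsf{P}^n(\pi_0)\mathsf{D}^n[f], \mathsf{D}^{n+1}[f]\rangle$, while the right side is $\mathsf{D}^n[\langle \pi_0 f, \mathsf{D}[f]\rangle]$, which by iterating \textbf{[CD.4]} (Lemma \ref{CD4}) equals $\langle \mathsf{D}^n[\pi_0 f], \mathsf{D}^{n+1}[f]\rangle$, and then $\mathsf{D}^n[\pi_0 f] = \mathsf{P}^n(\pi_0)\mathsf{D}^n[f]$ because $\pi_0$ is linear (so $\mathsf{T}^n(\pi_0) = \mathsf{P}^n(\pi_0)$) and by the higher-order chain rule. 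Iterating this compatibility yields $\mathsf{T}^n(\omega^{\mathsf{D}}(f)) = \omega^{\mathsf{D}}(\mathsf{T}^n(f))$, and taking $0$-th components gives the bridge identity $\mathsf{T}^n(\omega^{\mathsf{D}}(f))_0 = \mathsf{T}^n(f)$.

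With this identity in hand, composition preservation is a one-line computation: $(\omega^{\mathsf{D}}(f) \ast \omega^{\mathsf{D}}(g))_n = \mathsf{T}^n(\omega^{\mathsf{D}}(f))_0\, \omega^{\mathsf{D}}(g)_n = \mathsf{T}^n(f)\,\mathsf{D}^n[g] = \mathsf{D}^n[fg] = \omega^{\mathsf{D}}(fg)_n$, where the third equality is exactly the higher-order chain rule (\ref{highchainrule}). I expect the bridge identity $\mathsf{T}^n(\omega^{\mathsf{D}}(f))_0 = \mathsf{T}^n(f)$—and in particular getting the two meanings of $\mathsf{T}$ to line up cleanly via \textbf{[CD.4]} and the linearity of $\pi_0$—to be the main obstacle; once it is available, everything else reduces to componentwise bookkeeping with the differential combinator axioms.
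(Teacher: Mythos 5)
Your proposal is correct and follows essentially the same route as the paper: well-definedness on morphisms via Lemma \ref{omegalemma}, the higher-order versions of \textbf{[CD.3]} and \textbf{[CD.1]} for identities, projections, and the additive structure, and compatibility of the two tangent functors combined with the higher-order chain rule (\ref{highchainrule}) for preservation of composition. If anything you are more thorough than the paper, which only verifies $\mathsf{T}(\omega^{\mathsf{D}}(f)_\bullet)_0 = \mathsf{T}(f)$ at the bottom level and leaves implicit the iterated bridge identity $\mathsf{T}^n(\omega^{\mathsf{D}}(f)_\bullet)_0 = \mathsf{T}^n(f)$ needed in the composition formula, whereas your sequence-level identity $\mathsf{T}(\omega^{\mathsf{D}}(f)_\bullet) = \omega^{\mathsf{D}}(\mathsf{T}(f))_\bullet$, proved via \textbf{[CD.4]} and the linearity of $\pi_0$, supplies that step in full.
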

\begin{proof} That $\omega^{\mathsf{D}}$ is well-defined follows from Lemma \ref{omegalemma}. Now we must check that $\omega^{\mathsf{D}}$ is indeed is a strict Cartesian left additive functor. Using the higher-order version of \textbf{[CD.3]}, we start by showing $\omega^{\mathsf{D}}$ preserves identities and projections: 
\[\omega^{\mathsf{D}}(1)_n= \mathsf{D}^n[1]= \underbrace{\pi_1 \hdots \pi_1}_{n-\text{times}} = i_n \]
\[\omega^{\mathsf{D}}(\pi_j)_n= \mathsf{D}^n[\pi_j]= \underbrace{\pi_1 \hdots \pi_1}_{n-\text{times}} \pi_j = i_n\pi_j = (i_\bullet \cdot \pi_j)_n \]
Now using the higher-order version of \textbf{[CD.1]}, we have that $\omega^{\mathsf{D}}$ preserves the additive structure: 
\[\omega^{\mathsf{D}}(0)_n= \mathsf{D}^n[0]= 0 = 0_n  \]
\[\omega^{\mathsf{D}}(f+g)_n = \mathsf{D}^n[f+g]= \mathsf{D}^n[f] + \mathsf{D}^n[g] = \omega^{\mathsf{D}}(f)_n + \omega^{\mathsf{D}}(g)_n = \left(\omega^{\mathsf{D}}(f)_\bullet + \omega^{\mathsf{D}}(g)_\bullet \right)_n\]
Lastly, to show that $\omega^{\mathsf{D}}$ preserves composition, notice that $\omega^{\mathsf{D}}$ also preserve the tangent functors immediately by definition: 
\[\mathsf{T}(\omega^{\mathsf{D}}(f)_\bullet)_0 = \langle \pi_0 \omega^{\mathsf{D}}(f)_0, \omega^{\mathsf{D}}(f)_1 \rangle = \langle \pi_0 f, \mathsf{D}[f] \rangle = \mathsf{T}(f)  \]
Now using that $\omega^{\mathsf{D}}$ preserves tangent functors and the higher-order version of \textbf{[CD.5]}, we have that: 
\begin{align*}
\omega^{\mathsf{D}}(fg)_n = \mathsf{D}^n[fg] = \mathsf{T}^n(f) \mathsf{D}^n[g] = \mathsf{T}^n(f) \mathsf{D}^n[g] = \mathsf{T}(\omega^{\mathsf{D}}(f)_\bullet)_0 \omega^{\mathsf{D}}(g)_n = (\omega^{\mathsf{D}}(f)_\bullet \ast \omega^{\mathsf{D}}(g)_\bullet)_n \end{align*}
\end{proof} 

\begin{proposition}\label{Dcoalgprop1} If $\mathbb{X}$ is a Cartesian differential category, then $(\mathbb{X}, \omega^{\mathsf{D}})$ is a $\mathcal{D}$-coalgebra. 
\end{proposition}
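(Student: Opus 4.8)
The plan is to verify the two $\mathcal{D}$-coalgebra axioms for the pair $(\mathbb{X}, \omega^{\mathsf{D}})$, since Lemma \ref{etalemma1} already establishes that $\omega^{\mathsf{D}}$ is a well-defined strict Cartesian left additive functor. Thus only the counit triangle and the comultiplication square remain to be checked, and both reduce to unwinding the definitions of $\varepsilon = \overline{\varepsilon}$, $\delta = \overline{\delta}$, and $\mathcal{D}[\omega^{\mathsf{D}}]$, together with the single bookkeeping identity $\mathsf{D}^m\!\left[\mathsf{D}^n[f]\right] = \mathsf{D}^{n+m}[f]$.

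First I would dispatch the counit triangle $\omega^{\mathsf{D}}\varepsilon = 1_{\mathbb{X}}$. On objects this is trivial, and on a map $f$ it is immediate from the definitions of $\overline{\varepsilon}$ and of $\omega^{\mathsf{D}}$, namely $\varepsilon\!\left(\omega^{\mathsf{D}}(f)_\bullet\right) = \omega^{\mathsf{D}}(f)_0 = f$.

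The substantive step is the comultiplication square, i.e.\ $\omega^{\mathsf{D}}\delta = \omega^{\mathsf{D}}\mathcal{D}[\omega^{\mathsf{D}}]$ as functors $\mathbb{X} \to \mathcal{D}\!\left[\mathcal{D}[\mathbb{X}]\right]$. Both composites fix objects, so it suffices to compare, for a fixed map $f$, the two $\mathsf{D}$-sequences of $\mathcal{D}[\mathbb{X}]$ they produce, which I would do by double indexing. For the left-hand composite, using $\delta = \overline{\delta}$ and the identity $\mathsf{D}^n[g_\bullet]_m = g_{n+m}$ recorded in the proof of Lemma \ref{natdelta}, one gets
\[\left(\delta\!\left(\omega^{\mathsf{D}}(f)\right)_n\right)_m = \mathsf{D}^n\!\left[\omega^{\mathsf{D}}(f)_\bullet\right]_m = \omega^{\mathsf{D}}(f)_{n+m} = \mathsf{D}^{n+m}[f].\]
For the right-hand composite, using $\mathcal{D}[\omega^{\mathsf{D}}](g_\bullet)_n = \omega^{\mathsf{D}}(g_n)$ and then the definition of $\omega^{\mathsf{D}}$ in the inner index, one gets
\[\left(\mathcal{D}[\omega^{\mathsf{D}}]\!\left(\omega^{\mathsf{D}}(f)\right)_n\right)_m = \omega^{\mathsf{D}}\!\left(\mathsf{D}^n[f]\right)_m = \mathsf{D}^m\!\left[\mathsf{D}^n[f]\right] = \mathsf{D}^{n+m}[f].\]
Since both double-indexed families agree, the two $\mathsf{D}$-sequences coincide and the square commutes.

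I expect no conceptual obstacle: the only care needed is the edge behaviour at $n=0$, where $\overline{\delta}(g_\bullet)_0 = g_\bullet$ rather than an honest derivative, which I would check separately and confirm is consistent with the uniform formula $\mathsf{D}^{n+m}[f]$, and the observation that $\omega^{\mathsf{D}}(f)_k = \mathsf{D}^k[f]$ is what makes both directions collapse to the same iterated derivative. With both axioms verified, $(\mathbb{X}, \omega^{\mathsf{D}})$ is a $\mathcal{D}$-coalgebra.
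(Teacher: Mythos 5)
Your proposal is correct and matches the paper's own proof essentially verbatim: the counit triangle is dispatched by $\varepsilon\!\left(\omega^{\mathsf{D}}(f)_\bullet\right) = \omega^{\mathsf{D}}(f)_0 = f$, and the comultiplication square is verified by the same double-indexing computation collapsing both sides to $\mathsf{D}^{n+m}[f]$. Your extra remark about the edge behaviour at $n=0$ is a sound (if implicit in the paper) consistency check, but it introduces no new ideas beyond the paper's argument.
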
 
\begin{proof} We must check the two diagrams of a $\mathcal{D}$-coalgebra. Though these are in fact automatic by definition. Starting with the triangle: 
\[\varepsilon(\omega^{\mathsf{D}}(f)_\bullet) = \omega^{\mathsf{D}}(f)_0 = f\]
and now the square (working again with double indexing): 
\begin{align*}
\left(\mathcal{D}[\omega^{\mathsf{D}}](\omega^{\mathsf{D}}(f)_\bullet)_n\right)_m &=~Ê\omega^{\mathsf{D}}(\omega^{\mathsf{D}}(f)_n)_m\\
&=~ \omega^{\mathsf{D}}(\mathsf{D}^n[f])_m \\
&=~ \mathsf{D}^{n+m}[f] \\
&=~Ê\omega^{\mathsf{D}}(f)_{n+m} \\
&=~Ê\mathsf{D}^n[\omega^{\mathsf{D}}(f)_\bullet]_m \\
&=~Ê\left(\delta(\omega^{\mathsf{D}}(f)_\bullet)_n\right)_m
\end{align*}
\end{proof} 

A \textbf{strict Cartesian differential functor} between Cartesian differential categories is a strict Cartesian left additive functor $\mathsf{F}$ which also preserves the differential combinator strictly in the sense that: $\mathsf{F}(\mathsf{D}[f]) = \mathsf{D}[\mathsf{F}(f)]$. We now show that strict Cartesian differential functors are in fact $\mathcal{D}$-coalgebra morphisms: 

\begin{lemma}\label{lemfunc1} If $\mathsf{F}: \mathbb{X} \to \mathbb{Y}$ is a strict Cartesian differential functor, then $\mathsf{F}: (\mathbb{X}, \omega^{\mathsf{D}}) \to (\mathbb{Y}, \omega^{\mathsf{D}})$ is a $\mathcal{D}$-coalgebra morphism. 
\end{lemma}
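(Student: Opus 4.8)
The plan is to unwind the definitions and verify that the single coalgebra-morphism square commutes, which amounts to a pointwise equality of $\mathsf{D}$-sequences. Concretely, a $\mathcal{D}$-coalgebra morphism $\mathsf{F}\colon (\mathbb{X}, \omega^{\mathsf{D}}) \to (\mathbb{Y}, \omega^{\mathsf{D}})$ is a strict Cartesian left additive functor making the square with $\omega^{\mathsf{D}}$ on the vertical edges and $\mathsf{F}$, $\mathcal{D}[\mathsf{F}]$ on the horizontal edges commute. Since a strict Cartesian differential functor is in particular a strict Cartesian left additive functor, the only thing to check is that $\omega^{\mathsf{D}} \circ \mathsf{F} = \mathcal{D}[\mathsf{F}] \circ \omega^{\mathsf{D}}$ as functors $\mathbb{X} \to \mathcal{D}[\mathbb{Y}]$.

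First I would dispose of the action on objects, which is immediate since both composites send $A$ to $\mathsf{F}(A)$. Then the substance is the action on a map $f$ of $\mathbb{X}$, where I must show the two $\mathsf{D}$-sequences agree term by term. Unwinding the left-hand side, $\left(\omega^{\mathsf{D}}(\mathsf{F}(f))\right)_n = \mathsf{D}^n[\mathsf{F}(f)]$, using the definition of $\omega^{\mathsf{D}}$ in the target category $\mathbb{Y}$. Unwinding the right-hand side, $\left(\mathcal{D}[\mathsf{F}](\omega^{\mathsf{D}}(f))\right)_n = \mathsf{F}\!\left(\omega^{\mathsf{D}}(f)_n\right) = \mathsf{F}(\mathsf{D}^n[f])$, using that $\mathcal{D}[\mathsf{F}]$ acts pointwise as $\mathsf{F}$ (its defining property from Lemma \ref{DFunctor}) together with the definition of $\omega^{\mathsf{D}}$ in the source category $\mathbb{X}$.

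The crux is therefore the single identity $\mathsf{D}^n[\mathsf{F}(f)] = \mathsf{F}(\mathsf{D}^n[f])$ for all $n$, which is exactly the statement that $\mathsf{F}$ preserves the differential combinator, iterated $n$ times. I would establish this by induction on $n$: the base case $n=0$ is trivial (both sides are $\mathsf{F}(f)$), and the inductive step is precisely the defining property $\mathsf{F}(\mathsf{D}[g]) = \mathsf{D}[\mathsf{F}(g)]$ of a strict Cartesian differential functor applied to $g = \mathsf{D}^{n}[f]$, combined with the induction hypothesis. I do not expect any genuine obstacle here; the proof is essentially bookkeeping, and the only point requiring care is keeping straight that $\mathsf{D}$ denotes the differential combinator of $\mathbb{X}$ on the inner applications and of $\mathbb{Y}$ on the outer ones, which is exactly what the strict differential functor condition reconciles.

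Assembling these, the proof reads as follows.
\begin{proof} Since $\mathsf{F}$ is a strict Cartesian differential functor, it is in particular a strict Cartesian left additive functor, so it remains only to check that the coalgebra-morphism square commutes, that is, $\omega^{\mathsf{D}}(\mathsf{F}(f))_\bullet = \mathcal{D}[\mathsf{F}](\omega^{\mathsf{D}}(f)_\bullet)$. On objects both sides send $A$ to $\mathsf{F}(A)$. For a map $f$, we first note that since $\mathsf{F}$ preserves the differential combinator, a straightforward induction on $n$ gives $\mathsf{F}(\mathsf{D}^n[f]) = \mathsf{D}^n[\mathsf{F}(f)]$: the case $n=0$ is immediate, and assuming $\mathsf{F}(\mathsf{D}^n[f]) = \mathsf{D}^n[\mathsf{F}(f)]$ we have $\mathsf{F}(\mathsf{D}^{n+1}[f]) = \mathsf{F}(\mathsf{D}[\mathsf{D}^n[f]]) = \mathsf{D}[\mathsf{F}(\mathsf{D}^n[f])] = \mathsf{D}[\mathsf{D}^n[\mathsf{F}(f)]] = \mathsf{D}^{n+1}[\mathsf{F}(f)]$. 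Therefore, for each $n$:
\begin{align*}
\mathcal{D}[\mathsf{F}](\omega^{\mathsf{D}}(f)_\bullet)_n &=~ \mathsf{F}(\omega^{\mathsf{D}}(f)_n) \\
&=~ \mathsf{F}(\mathsf{D}^n[f]) \\
&=~ \mathsf{D}^n[\mathsf{F}(f)] \\
&=~ \omega^{\mathsf{D}}(\mathsf{F}(f))_n
\end{align*}
which completes the proof.
\end{proof}
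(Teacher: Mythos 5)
Your proposal is correct and follows essentially the same route as the paper's proof: both unwind the coalgebra-morphism square to the pointwise identity $\omega^{\mathsf{D}}(\mathsf{F}(f))_n = \mathsf{D}^n[\mathsf{F}(f)] = \mathsf{F}(\mathsf{D}^n[f]) = \mathcal{D}[\mathsf{F}](\omega^{\mathsf{D}}(f)_\bullet)_n$. The only difference is that you make explicit the induction establishing $\mathsf{F}(\mathsf{D}^n[f]) = \mathsf{D}^n[\mathsf{F}(f)]$, which the paper leaves as an immediate consequence of the strict differential functor condition.
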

\begin{proof} This is again straightforward by definition: 
\[  \omega^{\mathsf{D}}(\mathsf{F}(f))_n = (\mathsf{D})^n[\mathsf{F}(f)] = \mathsf{F}(\mathsf{D}^n[f]) = \mathsf{F}(\omega^{\mathsf{D}}(f)_n) =  \mathcal{D}[\mathsf{F}](\omega^\mathsf{D}(f)_\bullet)_n \]
\end{proof} 

Now for the converse, we will show that a $\mathcal{D}$-coalgebra is a Cartesian differential category and that $\mathcal{D}$-coalgebra morphisms are strict Cartesian differential functors. 

\begin{proposition}\label{Dcoalgprop2} If $(\mathbb{X}, \omega)$ is a $\mathcal{D}$-coalgebra, then $\mathbb{X}$ is a Cartesian differential category with differential combinator defined as $\mathsf{D}^\omega[f] = \omega(f)_1$. 
\end{proposition}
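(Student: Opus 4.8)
The plan is to strip all the combinatorial content out of the two coalgebra equations, reducing them to component identities for the $\mathsf{D}$-sequence $\omega(f)_\bullet$, and then to verify the seven differential combinator axioms one at a time. Since a $\mathcal{D}$-coalgebra lives on a Cartesian left additive category by definition, the underlying additive and Cartesian structure of $\mathbb{X}$ is already in place, so only the axioms for $\mathsf{D}^\omega$ remain to be checked.

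First I would unpack the coalgebra axioms. The counit triangle $\omega\varepsilon = 1_{\mathbb{X}}$ reads $\omega(f)_0 = f$. The comultiplication square $\delta\,\omega = \mathcal{D}[\omega]\,\omega$, written with double indices exactly as in Lemma \ref{natdelta} and Proposition \ref{predcom1}, becomes $\omega(\omega(f)_n)_m = \omega(f)_{n+m}$ for all $n,m$, since the $m$-th component of $\delta(\omega(f)_\bullet)_n = \mathsf{D}^n[\omega(f)_\bullet]$ is $\omega(f)_{n+m}$ while that of $\mathcal{D}[\omega](\omega(f)_\bullet)_n = \omega(\omega(f)_n)$ is $\omega(\omega(f)_n)_m$. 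Taking $m=1$ and using the definition $\mathsf{D}^\omega[f] = \omega(f)_1$ gives $\omega(f)_{n+1} = \mathsf{D}^\omega[\omega(f)_n]$, whence by induction $\omega(f)_n = (\mathsf{D}^\omega)^n[f]$; in particular $\omega(f)_0 = f$, $\omega(f)_1 = \mathsf{D}^\omega[f]$, and $\omega(f)_2 = \mathsf{D}^\omega[\mathsf{D}^\omega[f]]$. This identification is the one structural fact that drives everything else.

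Next I would dispatch the axioms that follow purely from $\omega$ being a strict Cartesian left additive functor, simply by reading them off at index $1$. Preservation of the additive structure gives $\omega(f+g)_\bullet = \omega(f)_\bullet + \omega(g)_\bullet$ and $\omega(0)_\bullet = 0_\bullet$, hence \textbf{[CD.1]}. Preservation of identities and projections gives $\omega(1)_\bullet = i_\bullet$ and $\omega(\pi_j)_\bullet = i_\bullet \cdot \pi_j$, so $\omega(1)_1 = i_1 = \pi_1$ and $\omega(\pi_j)_1 = i_1\pi_j = \pi_1\pi_j$, which is \textbf{[CD.3]}; then \textbf{[CD.4]} follows from Lemma \ref{CD4} (or directly, as $\omega$ preserves pairings). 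For the chain rule \textbf{[CD.5]}, preservation of composition gives $\omega(fg)_\bullet = \omega(f)_\bullet \ast \omega(g)_\bullet$, so by the composition formula (\ref{predcomp}) and the definition of $\mathsf{T}$ we get $\omega(fg)_1 = \mathsf{T}(\omega(f)_\bullet)_0\,\omega(g)_1 = \langle \pi_0 f, \mathsf{D}^\omega[f]\rangle\,\mathsf{D}^\omega[g]$, which is exactly \textbf{[CD.5]}.

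Finally, the remaining axioms \textbf{[CD.2]}, \textbf{[CD.6]}, and \textbf{[CD.7]} are precisely the lowest-index instances of the defining equations of a $\mathsf{D}$-sequence, which $\omega(f)_\bullet$ satisfies. Evaluating \textbf{[DS.$1^\prime$]} and \textbf{[DS.$2^\prime$]} at $n=k=0$ yields $\langle 1,0\rangle\,\omega(f)_1 = 0$ and $(1\times(\pi_0+\pi_1))\,\omega(f)_1 = (1\times\pi_0)\,\omega(f)_1 + (1\times\pi_1)\,\omega(f)_1$, i.e. \textbf{[CD.2]}; evaluating \textbf{[DS.$3^\prime$]} and \textbf{[DS.$4^\prime$]} at $n=k=0$ yields $\ell\,\omega(f)_2 = \omega(f)_1$ and $c\,\omega(f)_2 = \omega(f)_2$, which, after substituting $\omega(f)_1 = \mathsf{D}^\omega[f]$ and $\omega(f)_2 = \mathsf{D}^\omega[\mathsf{D}^\omega[f]]$, are \textbf{[CD.6]} and \textbf{[CD.7]}. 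The only step requiring care is the unpacking of the comultiplication square into the index identity $\omega(\omega(f)_n)_m = \omega(f)_{n+m}$ together with the induction $\omega(f)_n = (\mathsf{D}^\omega)^n[f]$; once that is secured, no genuine computation remains, as each $\mathbf{CD}$ axiom is either functoriality of $\omega$ read off at a single index or a boundary case of a $\mathsf{D}$-sequence axiom.
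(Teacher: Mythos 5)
Your proposal is correct and takes essentially the same route as the paper's proof: the counit and comultiplication squares are unpacked into $\omega(f)_0 = f$ and $\omega(\omega(f)_n)_m = \omega(f)_{n+m}$, the axioms \textbf{[CD.1]}, \textbf{[CD.3]}, and \textbf{[CD.5]} are read off from $\omega$ being a strict Cartesian left additive functor at index $1$, and \textbf{[CD.2]}, \textbf{[CD.6]}, \textbf{[CD.7]} are the lowest-index instances of \textbf{[DS.$1^\prime$]}--\textbf{[DS.$4^\prime$]} for $\omega(f)_\bullet$, with \textbf{[CD.4]} dispatched via Lemma \ref{CD4}. Your explicit induction yielding $\omega(f)_n = (\mathsf{D}^\omega)^n[f]$ from the $m=1$ case is a slightly more careful rendering of the identity $(\mathsf{D}^\omega)^{n+m}[f] = \omega(f)_{n+m}$ that the paper states in one chain of equalities, but it is the same argument.
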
 
\begin{proof} We must show that $\mathsf{D}^\omega$ satisfies \textbf{[CD.1]} to \textbf{[CD.7]} (though recall that by  Lemma \ref{CD4}: \textbf{[CD.4]} is redundant). \\Ê\\
\textbf{[CD.1]}: Since $\omega$ preserves the additive structure strictly we have that $\omega(0)_\bullet = 0_\bullet$ and $\omega(f + g)_\bullet = \omega(f)_\bullet + \omega(g)_\bullet$. Therefore it follows that:
\[\mathsf{D}^\omega[0] = \omega(0)_1 = 0_1 = 0 \quad \quad \quad \mathsf{D}^\omega[f+g] = \omega(f+g)_1 = \omega(f)_1 + \omega(g)_1 = \mathsf{D}^\omega[f] + \mathsf{D}^\omega[g]\]
\textbf{[CD.2]}: Here we use {\bf [DS.$1^\prime$]} and {\bf [DS.$2^\prime$]} for $\omega(f)_\bullet$ at $n=1$: 
\[\langle 1,0 \rangle \mathsf{D}^\omega[f] = \langle 1,0 \rangle \omega(f)_1 = 0\]
\begin{align*}
\left(1 \times (\pi_0 + \pi_1) \right) \mathsf{D}^\omega[f] &=~ \left(1 \times (\pi_0 + \pi_1) \right) \omega(f)_1 \\
&=~ (1 \times \pi_0) \omega(f)_1 + (1 \times \pi_1) \omega(f)_1 \\
&=~ (1 \times \pi_0)\mathsf{D}^\omega[f] + (1 \times \pi_1)\mathsf{D}^\omega[f]
\end{align*}
\textbf{[CD.3]}: Since $\omega$ is a strict Cartesian functor, we have that $\omega(1)_\bullet=i_\bullet$ and $\omega(\pi_j)_\bullet=i_\bullet \cdot \pi_j$. Therefore it follows that:  
\[\mathsf{D}^\omega[1] = \omega(1)_1 = i_1 = \pi_1 \quad \quad \quad \mathsf{D}^\omega[\pi_j] = \omega(\pi_j)_1 = (i_\bullet \cdot \pi_j)_1 = i_1 \pi_j = \pi_1 \pi_j\]
\textbf{[CD.5]}: By the functoriality of $\omega$, we have that $\omega(fg)_\bullet = \omega(f)_\bullet \ast \omega(g)_\bullet$. By the $\mathcal{D}$-coalgebra structure, we also have that $f = \varepsilon(\omega(f)_\bullet) = \omega(f)_0$. Therefore it follows that: 
\begin{align*}
\mathsf{D}^\omega[fg]&=~ \omega(fg)_1 \\
&=~ \left( \omega(f)_\bullet \ast \omega(g)_\bullet \right)_1 \\
&=~ \mathsf{T}(\omega(f)_\bullet)_0 \omega(g)_1 \\
&=~ \langle \pi_0 \omega(f)_0, \omega(f)_1 \rangle \omega(g)_1 \\
&=~ \langle \pi_0 f, \mathsf{D}^\omega[f] \rangle \mathsf{D}^\omega[g]
\end{align*}
For the remaining two axioms, which involve the higher order derivative $(\mathsf{D}^\omega)^2$, notice that by the $\mathcal{D}$-coalgebra structure, we have the following equality: 
\begin{align*}
(\mathsf{D}^\omega)^{n+m}[f] &= \omega\left((\mathsf{D}^\omega)^n[f] \right)_m\\
&=~ \omega(\omega(f)_n)_m =Ê\left(\mathcal{D}[\omega](\omega(f)_\bullet)_n\right)_m \\
&=~ \left(\delta(\omega(f)_\bullet)_n\right)_m \\
&=~ \left(\mathsf{D}^\omega\right)^n[\omega(f)_\bullet]_m \\
&=~ \omega(f)_{n+m}
\end{align*}
In particular when $n=m=1$, we have that $(\mathsf{D}^\omega)^2[f] = \omega(f)_2$. \\Ê\\
\textbf{[CD.6]}: Here we use {\bf [DS.$3^\prime$]} for $\omega(f)_\bullet$ at $n=1$: 
\[\ell (\mathsf{D}^\omega)^2[f] = \ell \omega(f)_2 = \omega(f)_1 = \mathsf{D}^\omega[f]\]
\textbf{[CD.7]}: Here we use {\bf [DS.$4^\prime$]} for $\omega(f)_\bullet$ at $n=1$: 
\[c (\mathsf{D}^\omega)^2[f] = c \omega(f)_2 = \omega(f)_2 = (\mathsf{D}^\omega)^2[f]\]
\end{proof} 

\begin{lemma}\label{lemfunc2} If $\mathsf{F}: (\mathbb{X}, \omega) \to (\mathbb{Y}, \omega^\prime)$ is a $\mathcal{D}$-coalgebra morphism, then $\mathsf{F}$ is a strict Cartesian differential functor with respect to differential combinators of Proposition \ref{Dcoalgprop2}. 
\end{lemma}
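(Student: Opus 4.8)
The plan is to observe that, since $\mathsf{F}$ is already assumed to be a strict Cartesian left additive functor, the only thing left to verify is that $\mathsf{F}$ preserves the differential combinators strictly, i.e.\ that $\mathsf{F}(\mathsf{D}^\omega[f]) = \mathsf{D}^{\omega^\prime}[\mathsf{F}(f)]$ for every map $f$ of $\mathbb{X}$. By Proposition \ref{Dcoalgprop2} these combinators are defined componentwise by $\mathsf{D}^\omega[f] = \omega(f)_1$ and $\mathsf{D}^{\omega^\prime}[\mathsf{F}(f)] = \omega^\prime(\mathsf{F}(f))_1$, so the required identity reduces to reading off the degree-one component of a single $\mathsf{D}$-sequence equation.

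First I would recall from the definition of $\mathcal{D}[\mathsf{F}]$ (the restriction of $\overline{\mathcal{D}}[\mathsf{F}]$ from Lemma \ref{DFunctor}) that it acts on a $\mathsf{D}$-sequence $h_\bullet$ by $\mathcal{D}[\mathsf{F}](h_\bullet)_n = \mathsf{F}(h_n)$; in particular $\mathsf{F}(\omega(f)_1) = \mathcal{D}[\mathsf{F}](\omega(f)_\bullet)_1$. Then the key step is to invoke the commuting square defining a $\mathcal{D}$-coalgebra morphism, which in diagrammatic order reads $\mathcal{D}[\mathsf{F}](\omega(f)_\bullet) = \omega^\prime(\mathsf{F}(f))_\bullet$ as an equality of $\mathsf{D}$-sequences over $\mathbb{Y}$. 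Evaluating both sides in degree $1$ and chaining the two observations gives
\begin{align*}
\mathsf{F}(\mathsf{D}^\omega[f]) = \mathsf{F}(\omega(f)_1) = \mathcal{D}[\mathsf{F}](\omega(f)_\bullet)_1 = \omega^\prime(\mathsf{F}(f))_1 = \mathsf{D}^{\omega^\prime}[\mathsf{F}(f)],
\end{align*}
which is exactly strict preservation of the differential combinator.

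I do not anticipate a genuine obstacle here: the statement is a direct consequence of unwinding the coalgebra-morphism square at its first component, together with the componentwise description of $\mathcal{D}[\mathsf{F}]$ and of the induced combinators. The only thing to be careful about is the bookkeeping between the two categories---keeping $\omega$ and $\mathsf{D}^\omega$ in $\mathbb{X}$ distinct from $\omega^\prime$ and $\mathsf{D}^{\omega^\prime}$ in $\mathbb{Y}$---and noting that no higher components are needed, since each induced combinator sees only the degree-one term of the relevant sequence.
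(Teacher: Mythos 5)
Your proposal is correct and is essentially identical to the paper's own proof: both reduce the claim to strict preservation of the differential combinator and verify it by the same chain $\mathsf{F}(\mathsf{D}^\omega[f]) = \mathsf{F}(\omega(f)_1) = \mathcal{D}[\mathsf{F}](\omega(f)_\bullet)_1 = \omega^\prime(\mathsf{F}(f))_1 = \mathsf{D}^{\omega^\prime}[\mathsf{F}(f)]$, evaluating the coalgebra-morphism square at degree one. No gaps; your extra remarks on bookkeeping simply make explicit what the paper calls ``straightforward by definition.''
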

\begin{proof} Straightforward by definition of the differential combinators and $\mathcal{D}$-coalgebra morphisms: 
\[\mathsf{F}(\mathsf{D}^\omega[f]) = \mathsf{F}(\omega(f)_1) = \mathcal{D}[\mathsf{F}](\omega(f)_\bullet)_1 = \omega^\prime(\mathsf{F}(f))_1 = \mathsf{D}^{\omega^\prime}[\mathsf{F}(f)]\]
\end{proof} 

Finally we show that we indeed have an equivalence between $\mathcal{D}$-coalgebras and Cartesian differential categories. 

\begin{theorem}\label{mainthm} The category of $\mathcal{D}$-coalgebras of the comonad $(\mathcal{D}, \delta, \varepsilon)$ is equivalent to the category of Cartesian differential categories and strict Cartesian differential functors between them.  \end{theorem}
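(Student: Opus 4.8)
The plan is to exhibit an isomorphism of categories between $\mathcal{D}$-coalgebras and Cartesian differential categories, and then observe that an isomorphism is in particular an equivalence. The two functors are already essentially constructed in the preceding results, so the work is in checking that they are mutually inverse. In one direction I would send a Cartesian differential category $(\mathbb{X}, \mathsf{D})$ to the $\mathcal{D}$-coalgebra $(\mathbb{X}, \omega^{\mathsf{D}})$ of Proposition \ref{Dcoalgprop1}, and a strict Cartesian differential functor $\mathsf{F}$ to the $\mathcal{D}$-coalgebra morphism $\mathsf{F}$ of Lemma \ref{lemfunc1}. In the other direction I would send a $\mathcal{D}$-coalgebra $(\mathbb{X}, \omega)$ to the Cartesian differential category $\mathbb{X}$ with combinator $\mathsf{D}^\omega[f] = \omega(f)_1$ of Proposition \ref{Dcoalgprop2}, and a $\mathcal{D}$-coalgebra morphism $\mathsf{F}$ to the strict Cartesian differential functor $\mathsf{F}$ of Lemma \ref{lemfunc2}. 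Since both assignments fix the underlying Cartesian left additive category and the underlying functor, functoriality of each is immediate and all that remains is to verify that the two composites are identities.

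First I would check the composite starting from a Cartesian differential category $(\mathbb{X}, \mathsf{D})$. Passing to $(\mathbb{X}, \omega^{\mathsf{D}})$ and then extracting the combinator of Proposition \ref{Dcoalgprop2} yields $\mathsf{D}^{\omega^{\mathsf{D}}}[f] = \omega^{\mathsf{D}}(f)_1 = \mathsf{D}^1[f] = \mathsf{D}[f]$, so the original combinator is recovered on the nose. As the underlying category and additive/product data are untouched, this composite is literally the identity functor on the category of Cartesian differential categories (on morphisms it is the identity since $\mathsf{F} \mapsto \mathsf{F} \mapsto \mathsf{F}$).

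For the converse composite, starting from a $\mathcal{D}$-coalgebra $(\mathbb{X}, \omega)$ I would form the combinator $\mathsf{D}^\omega$ and then the coalgebra $(\mathbb{X}, \omega^{\mathsf{D}^\omega})$, and must show $\omega^{\mathsf{D}^\omega} = \omega$. By definition $\omega^{\mathsf{D}^\omega}(f)_n = (\mathsf{D}^\omega)^n[f]$, so it suffices to prove $(\mathsf{D}^\omega)^n[f] = \omega(f)_n$ for every $n$. This is exactly the identity already derived inside the proof of Proposition \ref{Dcoalgprop2}: the coalgebra square relating $\delta$ and $\mathcal{D}[\omega]$, together with the counit law $f = \varepsilon(\omega(f)_\bullet) = \omega(f)_0$, forces $\omega(f)_{n+m} = \omega(\omega(f)_n)_m = (\mathsf{D}^\omega)^{n+m}[f]$ and hence $\omega(f)_n = (\mathsf{D}^\omega)^n[f]$. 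I expect this to be the main (indeed only non-formal) obstacle, since it is the step where both coalgebra axioms are genuinely used to pin down every term of the sequence $\omega(f)_\bullet$ as an iterated derivative; once it is in hand, $\omega^{\mathsf{D}^\omega} = \omega$ follows at once.

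Finally, since both composites are identity functors on their respective categories, the two functors form an isomorphism of categories. An isomorphism is in particular an equivalence, which establishes the theorem.
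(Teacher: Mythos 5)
Your proposal is correct and takes essentially the same route as the paper's own proof: both verify that the constructions of Proposition \ref{Dcoalgprop1} and Proposition \ref{Dcoalgprop2} are mutually inverse, via $\mathsf{D}^{\omega^{\mathsf{D}}}[f] = \omega^{\mathsf{D}}(f)_1 = \mathsf{D}[f]$ in one direction and the identity $(\mathsf{D}^\omega)^{n}[f] = \omega(f)_{n}$ (already established inside the proof of Proposition \ref{Dcoalgprop2} from the coalgebra square and counit law) in the other, with Lemma \ref{lemfunc1} and Lemma \ref{lemfunc2} handling morphisms. Your remark that this actually yields an isomorphism of categories, not merely an equivalence, is a harmless sharpening of the same argument.
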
 
\begin{proof}It is sufficient to show that constructions of Proposition \ref{Dcoalgprop1} and Proposition \ref{Dcoalgprop2} are inverse to each other. Starting with a Cartesian differential category $\mathbb{X}$ with differential combinator $\mathsf{D}$, we have that: 
\[\mathsf{D}^{\omega^\mathsf{D}}[f] = \omega^\mathsf{D}(f)_1 = \mathsf{D}[f]\]
Conversly, let $(\mathbb{X}, \omega)$ be a $\mathcal{D}$-coalgebra, and recall that $(\mathsf{D}^\omega)^{n+m}[f]= \omega(f)_{n+m}$ (as shown in the proof of Proposition \ref{Dcoalgprop2}). Then when $m=0$, we obtain that: 
\[\omega^{\mathsf{D}^\omega}(f)_n = (\mathsf{D}^\omega)^n [f] = \omega(f)_n \]
As $\mathcal{D}$-coalgebra morphisms are precisely strict Cartesian differential functors (Lemma \ref{lemfunc1} and Lemma \ref{lemfunc2}), we obtain the desired equivalence of categories. 
\end{proof} 

As an immediate consequence of Theorem \ref{mainthm}, since the categories of $\mathsf{D}$-sequences are in fact the cofree $\mathcal{D}$-coalgebras, we obtain that: 

\begin{corollary}\label{DseqCDC} For a Cartesian left additive category $\mathbb{X}$, its category of $\mathsf{D}$-sequences $\mathcal{D}[\mathbb{X}]$ is a Cartesian differential category whose differential combinator induced by $\delta$ (as defined in Proposition \ref{Dcoalgprop2}) is precisely given by the differential of pre-$\mathsf{D}$-sequences (as defined in Definition \ref{tandiffseq} (ii)), i.e.:   
\[\mathsf{D}^\delta[f_\bullet] = \mathsf{D}[f_\bullet]\]
 Furthermore, the tangent functor of $\mathcal{D}[\mathbb{X}]$ induced by Proposition \ref{tangentfunctorprop} is precisely given by the tangent of pre-$\mathsf{D}$-sequences as defined in Definition \ref{tandiffseq} (i). 
\end{corollary}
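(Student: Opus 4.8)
The plan is to derive Corollary~\ref{DseqCDC} as a direct consequence of Theorem~\ref{mainthm} together with the cofree-coalgebra structure on $\mathcal{D}[\mathbb{X}]$. The key observation is that for any comonad, the free coalgebra on an object $\mathbb{X}$ is the pair $(\mathcal{D}[\mathbb{X}], \delta_{\mathbb{X}})$, where the coalgebra structure map is the comultiplication $\delta: \mathcal{D}[\mathbb{X}] \to \mathcal{D}[\mathcal{D}[\mathbb{X}]]$. By Proposition~\ref{Dcoalgprop2}, every $\mathcal{D}$-coalgebra $(\mathbb{Y}, \omega)$ carries a differential combinator given by $\mathsf{D}^\omega[f] = \omega(f)_1$. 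Applying this to the specific coalgebra $(\mathcal{D}[\mathbb{X}], \delta)$ yields a Cartesian differential category structure on $\mathcal{D}[\mathbb{X}]$ whose differential combinator is $\mathsf{D}^\delta[f_\bullet] = \delta(f_\bullet)_1$.

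First I would unwind $\delta(f_\bullet)_1$ using the definition of $\overline{\delta}$ (recall $\delta = \overline{\delta}$) from just before Lemma~\ref{deltalemma1}. There we have $\overline{\delta}(f_\bullet)_n = \mathsf{D}^n[f_\bullet]$ for $n \geq 1$, so in particular
\[
\mathsf{D}^\delta[f_\bullet] = \delta(f_\bullet)_1 = \mathsf{D}^1[f_\bullet] = \mathsf{D}[f_\bullet],
\]
which is exactly the differential of pre-$\mathsf{D}$-sequences from Definition~\ref{tandiffseq}~(ii). This establishes the first claimed equality essentially by definitional chasing, with no real obstacle beyond correctly matching the indexing conventions for $\overline{\delta}$.

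For the statement about the tangent functor, I would invoke Proposition~\ref{tangentfunctorprop}, which tells us that the tangent functor induced by any differential combinator $\mathsf{D}^\delta$ acts on a morphism $f_\bullet$ as $\langle \pi_0 \cdot f_\bullet,\ \mathsf{D}^\delta[f_\bullet]\rangle$ (here $\pi_0$ and the pairing are those of the Cartesian left additive category $\mathcal{D}[\mathbb{X}]$, described in Proposition~\ref{preDprod}). Since we have just shown $\mathsf{D}^\delta[f_\bullet] = \mathsf{D}[f_\bullet]$, this becomes $\langle \pi_0 \cdot f_\bullet,\ \mathsf{D}[f_\bullet]\rangle$, which by Proposition~\ref{Diff1}~(i) is precisely $\mathsf{T}(f_\bullet)$ as defined in Definition~\ref{tandiffseq}~(i). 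Thus the induced tangent functor coincides with the tangent of pre-$\mathsf{D}$-sequences already constructed.

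\emph{The hard part} is genuinely just bookkeeping rather than mathematics: one must be careful that the Cartesian left additive structure used implicitly by Propositions~\ref{Dcoalgprop2} and~\ref{tangentfunctorprop} (the projections $\pi_0$, the pairing $\langle-,-\rangle$, and the additive structure) is the one on $\mathcal{D}[\mathbb{X}]$ from Proposition~\ref{preDprod} and Section~\ref{predaddsec}, and that Proposition~\ref{Diff1}~(i) applies to $\mathsf{D}$-sequences (which it does, since every $\mathsf{D}$-sequence is a pre-$\mathsf{D}$-sequence and $\mathsf{D}[f_\bullet]$, $\mathsf{T}(f_\bullet)$ remain $\mathsf{D}$-sequences by Proposition~\ref{Dprop1}). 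Once these identifications are made explicit, both equalities follow immediately, so the corollary requires no new calculation beyond citing the relevant earlier results.
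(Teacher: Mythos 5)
Your proposal is correct and follows essentially the same route as the paper, which likewise obtains the corollary immediately from Theorem~\ref{mainthm} via the cofree coalgebra $(\mathcal{D}[\mathbb{X}],\delta)$, Proposition~\ref{Dcoalgprop2} (giving $\mathsf{D}^\delta[f_\bullet]=\delta(f_\bullet)_1=\mathsf{D}[f_\bullet]$), and Proposition~\ref{Diff1}~(i) for the tangent functor. The only nitpick is terminological: since $\mathcal{D}$ is a comonad, $(\mathcal{D}[\mathbb{X}],\delta)$ is the \emph{cofree} coalgebra on $\mathbb{X}$, not the free one.
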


The curious reader may wonder what are the linear maps of $\mathcal{D}[\mathbb{X}]$. Recall that a $\mathsf{D}$-sequence $f_\bullet$ is said to be linear if $\mathsf{D}[f_\bullet] = (i_\bullet \cdot \pi_1) \ast f_\bullet = \pi_1 \cdot f_\bullet$. 

\begin{lemma} A $\mathsf{D}$-sequence $f_\bullet$ is linear if and only if $f_\bullet = i_\bullet \cdot f_0$. 
\end{lemma}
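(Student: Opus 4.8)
The plan is to unwind both sides explicitly and then prove each implication using the ``external'' calculus of scalar multiplication developed in Lemma~\ref{scalarlem}, Lemma~\ref{Tprop1}, and Lemma~\ref{astprop1}. Recall (as noted just before the statement) that linearity of $f_\bullet$ means $\mathsf{D}[f_\bullet] = \pi_1 \cdot f_\bullet$, while the equality $f_\bullet = i_\bullet \cdot f_0$ amounts, term by term, to $f_n = i_n f_0$ for all $n$, where $f_0 = \varepsilon(f_\bullet)$. I note in passing that neither implication will actually use the $\mathsf{D}$-sequence axioms; only the identities valid for arbitrary pre-$\mathsf{D}$-sequences are needed.

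For the ``if'' direction, I would assume $f_\bullet = i_\bullet \cdot f_0$ and compute $\mathsf{D}[f_\bullet]$ and $\pi_1 \cdot f_\bullet$ separately. On one hand, Lemma~\ref{Tprop1}~(vi), Proposition~\ref{Diff1}~(ii), and Lemma~\ref{scalarlem}~(iii) give $\mathsf{D}[i_\bullet \cdot f_0] = \mathsf{D}[i_\bullet] \cdot f_0 = (i_\bullet \cdot \pi_1) \cdot f_0 = i_\bullet \cdot (\pi_1 f_0)$. On the other hand, Lemma~\ref{scalarlem}~(iv), Lemma~\ref{astprop1}~(i), and Lemma~\ref{scalarlem}~(iii) give $\pi_1 \cdot (i_\bullet \cdot f_0) = (\pi_1 \cdot i_\bullet) \cdot f_0 = (i_\bullet \cdot \pi_1) \cdot f_0 = i_\bullet \cdot (\pi_1 f_0)$. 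Since both expressions equal $i_\bullet \cdot (\pi_1 f_0)$, we obtain $\mathsf{D}[f_\bullet] = \pi_1 \cdot f_\bullet$, i.e. $f_\bullet$ is linear.

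For the ``only if'' direction, assume $\mathsf{D}[f_\bullet] = \pi_1 \cdot f_\bullet$. The heart of the argument is the claim that $\mathsf{D}^n[f_\bullet] = i_n \cdot f_\bullet$ for all $n$, which I would prove by induction. The base case $n=0$ is $\mathsf{D}^0[f_\bullet] = f_\bullet = 1 \cdot f_\bullet = i_0 \cdot f_\bullet$ by Lemma~\ref{scalarlem}~(ii). For the step, I would compute $\mathsf{D}^{n+1}[f_\bullet] = \mathsf{D}[i_n \cdot f_\bullet] = \mathsf{P}(i_n) \cdot \mathsf{D}[f_\bullet] = \mathsf{P}(i_n) \cdot (\pi_1 \cdot f_\bullet) = (\mathsf{P}(i_n)\pi_1) \cdot f_\bullet$, using the inductive hypothesis, Lemma~\ref{Tprop1}~(v), linearity, and Lemma~\ref{scalarlem}~(i) in turn. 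Finally, evaluating the pre-$\mathsf{D}$-sequence identity $\mathsf{D}^n[f_\bullet] = i_n \cdot f_\bullet$ at index $0$ yields $f_n = \mathsf{D}^n[f_\bullet]_0 = (i_n \cdot f_\bullet)_0 = i_n f_0 = (i_\bullet \cdot f_0)_n$, which is exactly $f_\bullet = i_\bullet \cdot f_0$.

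The only genuine bookkeeping point, and hence the one place to be careful, is the identity $\mathsf{P}(i_n)\pi_1 = i_{n+1}$ used to close the induction. This follows from the naturality of $\pi_1$ with respect to $\mathsf{P}$ together with the recurrence $i_{n+1} = i_n \pi_1$ already observed in the proof of Lemma~\ref{Tfunctor}~(i): naturality of $\pi_1$ gives $\mathsf{P}(i_n)\pi_1 = \pi_1 i_n$, which is a string of $n+1$ copies of $\pi_1$ and hence equals $i_{n+1}$. Everything else is a mechanical application of the scalar-multiplication identities, so I expect no serious obstacle.
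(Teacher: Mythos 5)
Your proof is correct and takes essentially the same approach as the paper's: the ``if'' direction is the identical chain of scalar-multiplication identities ($\mathsf{D}[i_\bullet \cdot f_0] = (i_\bullet \cdot \pi_1) \cdot f_0 = (\pi_1 \cdot i_\bullet)\cdot f_0 = \pi_1 \cdot (i_\bullet \cdot f_0)$), and your ``only if'' direction is the same induction, which the paper runs ``internally'' ($f_{n+1} = \mathsf{P}^n(\pi_1) f_n$, hence $f_n = i_n f_0$ via $\mathsf{P}^n(\pi_1) i_n = i_{n+1}$) while you run it ``externally'' as the sequence identity $\mathsf{D}^n[f_\bullet] = i_n \cdot f_\bullet$ evaluated at index $0$, resting on the equivalent naturality fact $\mathsf{P}(i_n)\pi_1 = \pi_1 i_n = i_{n+1}$. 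Your side remark that only pre-$\mathsf{D}$-sequence identities are needed, never the axioms \textbf{[DS.1]}--\textbf{[DS.4]}, is accurate and holds equally for the paper's proof.
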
 
\begin{proof} $\Rightarrow$: Suppose that $f_\bullet$ is linear. In particular this implies that:
\[f_{n+1} = \mathsf{D}[f_\bullet]_n = (\pi_1 \cdot f_\bullet)_n = \mathsf{P}^n(\pi_1) f_n \]
We now show by induction on $n$ that $f_n = (i_\bullet \cdot f_0)_n$. When $n=0$, we have that $f_0 = i_0 f_0 = (i_\bullet \cdot f_0)_0$. Now suppose the desired equality holds for $k \leq n$, we now show it for $n+1$: 
\begin{align*}
f_{n+1}&=~ \mathsf{P}^n(\pi_1) f_n \\
&=~ \mathsf{P}^n(\pi_1) (i_\bullet \cdot f_0)_n \\
&=~ \mathsf{P}^n(\pi_1) i_n f_0 \\
&=~  \mathsf{P}^n(\pi_1) \underbrace{\pi_1 \hdots \pi_1}_{n \text{ times}} f_0 \\
&=~ \underbrace{\pi_1 \hdots \pi_1}_{n+1 \text{ times}} f_0 \\
&=~ i_{n+1} f_0 \\
&=~ (i_\bullet \cdot f_0)_{n+1}
\end{align*}
$\Leftarrow$: Suppose that $f_\bullet = i_\bullet \cdot f_0$. 
\[\mathsf{D}[f_\bullet] = \mathsf{D}[i_\bullet \cdot f_0] = \mathsf{D}[i_\bullet] \cdot f_0 = (i_\bullet \cdot \pi_1) \cdot f_0 = (\pi_1 \cdot i_\bullet) \cdot f_0 = \pi_1 \cdot (i_\bullet \cdot f_0) = \pi_1 \cdot f_\bullet\]
\end{proof} 

For the comonad $\mathsf{Fa{\grave{a}}}$ of the Fa\`a di Bruno construction, $\mathsf{Fa{\grave{a}}}$-coalgebras are precisely Cartesian differential categories \cite[Theorem 3.2.6]{cockett2011faa}. Then as another consequence of Theorem \ref{mainthm}, $\mathcal{D}$-coalgebras are equivalent to $\mathsf{Fa{\grave{a}}}$-coalgebras, and in particular: 

\begin{corollary}\label{cor1} For a Cartesian left additive category $\mathbb{X}$, $\mathsf{Fa{\grave{a}}}(\mathbb{X})$ (as defined in \cite[Section 2]{cockett2011faa}) and its category of $\mathcal{D}$-sequences $\mathcal{D}[\mathbb{X}]$, are equivalent as Cartesian differential categories. 
\end{corollary}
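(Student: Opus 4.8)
The plan is to deduce the corollary from the \emph{uniqueness of cofree objects}, avoiding any direct comparison of $\mathsf{D}$-sequences with the Fa\`a di Bruno sequences (which have different arities, $\mathsf{P}^n(A) \to B$ versus $A^n \to B$, and would be painful to match by hand). Write $\mathsf{CDC}$ for the category of Cartesian differential categories and strict Cartesian differential functors. Theorem \ref{mainthm} supplies an equivalence $E_1 : \mathsf{CLAC}^{\mathcal{D}} \to \mathsf{CDC}$ between the Eilenberg--Moore category of the comonad $(\mathcal{D},\delta,\varepsilon)$ and $\mathsf{CDC}$, and \cite[Theorem 3.2.6]{cockett2011faa} supplies an analogous equivalence $E_2 : \mathsf{CLAC}^{\mathsf{Fa{\grave{a}}}} \to \mathsf{CDC}$ for the Fa\`a di Bruno comonad; I regard both $\mathcal{D}$ and $\mathsf{Fa{\grave{a}}}$ as comonads on $\mathsf{CLAC}$.

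The crucial observation is that both equivalences sit over $\mathsf{CLAC}$. Let $\mathsf{U}:\mathsf{CDC}\to\mathsf{CLAC}$ be the functor forgetting the differential combinator, and let $U_1,U_2$ be the forgetful functors from the two Eilenberg--Moore categories down to $\mathsf{CLAC}$. For the comonad $\mathcal{D}$ this compatibility is immediate from the constructions of Section \ref{Dsec}: a coalgebra $(\mathbb{X},\omega)$ has underlying Cartesian left additive category $\mathbb{X}$, and the equivalence of Proposition \ref{Dcoalgprop2} sends it to the Cartesian differential category $(\mathbb{X}, \mathsf{D}^\omega)$ whose underlying left additive category is again $\mathbb{X}$; hence $U_1 \cong \mathsf{U} E_1$. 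The same holds for $\mathsf{Fa{\grave{a}}}$, giving $U_2 \cong \mathsf{U} E_2$, this being part of the content of \cite[Theorem 3.2.6]{cockett2011faa}.

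Now recall that for any comonad the forgetful functor from its Eilenberg--Moore category has the cofree-coalgebra functor as right adjoint; thus $U_1 \dashv R_1$ with $R_1(\mathbb{X}) = (\mathcal{D}[\mathbb{X}],\delta)$, and $U_2 \dashv R_2$ with $R_2(\mathbb{X})$ the cofree $\mathsf{Fa{\grave{a}}}$-coalgebra on $\mathbb{X}$, carried by $\mathsf{Fa{\grave{a}}}(\mathbb{X})$. Composing with the adjoint equivalences $E_i^{-1}\dashv E_i$ and using $\mathsf{U}\cong U_i E_i^{-1}$, one obtains $\mathsf{U}\dashv E_1 R_1$ and $\mathsf{U}\dashv E_2 R_2$. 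By uniqueness of right adjoints, $E_1 R_1 \cong E_2 R_2$ as functors $\mathsf{CLAC}\to\mathsf{CDC}$; evaluating at $\mathbb{X}$, and using Corollary \ref{DseqCDC} to identify $E_1 R_1(\mathbb{X})$ with the Cartesian differential category $\mathcal{D}[\mathbb{X}]$ (and $E_2 R_2(\mathbb{X})$ with $\mathsf{Fa{\grave{a}}}(\mathbb{X})$), this yields an isomorphism $\mathcal{D}[\mathbb{X}] \cong \mathsf{Fa{\grave{a}}}(\mathbb{X})$ in $\mathsf{CDC}$, which in particular is an equivalence of Cartesian differential categories.

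The one genuine point to verify --- and the main obstacle --- is the compatibility claim $U_i \cong \mathsf{U} E_i$, i.e.\ that both equivalences are equivalences \emph{over} $\mathsf{CLAC}$, together with the bookkeeping that the class of morphisms in $\mathsf{CDC}$ (strict Cartesian differential functors) is the same one appearing in \cite[Theorem 3.2.6]{cockett2011faa}. For our comonad this is transparent from Proposition \ref{Dcoalgprop2} and Lemmas \ref{lemfunc1}--\ref{lemfunc2}; for the Fa\`a di Bruno comonad it must be read off from Cockett and Seely's construction, where the underlying Cartesian left additive category of a $\mathsf{Fa{\grave{a}}}$-coalgebra is likewise left untouched. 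Once this is in place the remainder is formal. Alternatively, one may phrase the whole argument through Yoneda: both $\mathcal{D}[\mathbb{X}]$ and $\mathsf{Fa{\grave{a}}}(\mathbb{X})$ represent the functor $\mathbb{Y}\mapsto \mathsf{CLAC}(\mathsf{U}\mathbb{Y},\mathbb{X})$ on $\mathsf{CDC}$, and are therefore canonically isomorphic.
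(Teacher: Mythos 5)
Your proof is correct and takes essentially the same route as the paper: the paper likewise deduces the corollary purely from cofreeness, noting that Theorem \ref{mainthm} exhibits $\mathcal{D}[\mathbb{X}]$ as the cofree Cartesian differential category over $\mathbb{X}$ while \cite[Theorem 3.2.6]{cockett2011faa} does the same for $\mathsf{Fa{\grave{a}}}(\mathbb{X})$, so the two are equivalent by uniqueness of cofree objects, with no direct comparison of the sequences. You have simply made explicit the bookkeeping the paper leaves implicit --- that both Eilenberg--Moore equivalences live over $\mathsf{CLAC}$ and that cofree coalgebras are right adjoint to the forgetful functors, whence uniqueness of right adjoints applies.
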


\section{Conclusion}

The main goal of this paper was to develop an alternative construction to the Fa\`a di Bruno construction for building cofree Cartesian differential categories. In particular, this constructions avoids the combinatorics of the Fa\`a di Bruno formula by instead considering an expression of the higher-order chain rule which involves the tangent functor (\ref{highchainrule}). And as Robert Seely once told the author: ``this construction clears away all the (symmetric) trees that hid the real structure''. 

It is interesting to note that pre-$\mathsf{D}$-sequences and much of their structure (such as composition and differentiation) can be defined for arbitrary categories with finite products -- which provides the possibility of studying differentiation and the chain rule in contexts without an additive structure. Hopefully this new construction will pave the way for future study on cofree Cartesian differential categories. For example, one could study the category of pre-$\mathsf{D}$-sequences and $\mathsf{D}$-sequences for specific categories. Such as what is the category of pre-$\mathsf{D}$-sequences for the category of sets? Or what is the category of $\mathsf{D}$-sequences of the category of commutative monoids?  

Finally, as suggested by one of the reviewers, there is another possible approach involving axiomatizing sequences of the form $(f, \mathsf{T}(f), \mathsf{T}^2(f), \hdots)$, where $\mathsf{T}$ is the tangent functor of a Cartesian differential category. Composition of these sequences, which would correspond to functoriality of the tangent functor ($\mathsf{T}(fg)= \mathsf{T}(f)\mathsf{T}(g)$), is simpler as it is given by point-wise composition. However, the differential of these sequences is no longer given by a simple shift, and the product structure is no longer given point-wise but now involving the permutation $\mathsf{P}(A \times B) \cong \mathsf{P}(A) \times \mathsf{P}(B)$. For the purpose of this paper, $\mathsf{D}$-sequences are the more natural choice as they focus on the Cartesian differential category structure (particularly the differential combinator) rather than the tangent category structure (which in a certain sense ``hides'' the differential combinator).    \\ 

\subparagraph{Acknowledgements:} The author would like thank Robin Cockett and Robert Seely for their support of this project, their editorial suggestions, and useful discussions. The author would also like to thank the reviewers for this paper for their editorial comments and suggestions. 

\bibliographystyle{spmpsci}      

\begin{thebibliography}{99}

\bibitem
{bauer2018directional}
K. Bauer, B. Johnson, C. Osborne, E. Riehl, and A. Tebbe (2018)
Directional derivatives and higher order chain rules for abelian functor calculus. 
{\em Topology and its Applications} \textbf{235}, 375--427

\bibitem
{blute2009cartesian} 
R.F. Blute, J.R.B. Cockett, and R.A.G. Seely (2009)
Cartesian differential categories. 
{\em Theory and Applications of Categories}  {\bf 22}(23), 622--672.

\bibitem
{dblute2006differential} 
R.F. Blute, J.R.B. Cockett, and R.A.G. Seely (2006)
Differential categories. 
{\em Mathematical Structures in Computer Science}  {\bf 16}(06), 1049--1083.

\bibitem
{cockett2016differential} 
J.R.B. Cockett and G.S.H. Cruttwell (2018)
Differential Bundles and Fibrations for Tangent Categories. 
{\em Cahiers de Topologie et G{\'e}om{\'e}trie Diff{\'e}rentielle Cat{\'e}goriques}  {\bf 59}, 10--92.

\bibitem
{cockett2014differential} 
J.R.B. Cockett and G.S.H. Cruttwell (2014)
Differential Structure, Tangent Structure, and SDG.
{\em Applied Categorical Structures}  {\bf 22}(2), 331--417.

\bibitem
{cockett2011differential} 
J.R.B. Cockett, G.S.H. Cruttwell, J. Gallagher (2011)
Differential restriction categories.
{\em Theory and Applications of Categories}  {\bf 25}(21), 537--613.

\bibitem
{cockett2011faa} 
J.R.B. Cockett and R.A.G. Seely (2011)
The {F}a\`a di {B}runo construction. 
{\em Theory and Applications of Categories}  {\bf 25}(15), 394--425.

\bibitem
{cruttwell2017Cartesian} 
G.S.H. Cruttwell (2017)
Cartesian differential categories revisited. 
{\em Mathematical Structures in Computer Science} {\bf 27}(1), 70--91.

\bibitem
{ehrhard2017introduction}
T. Ehrhard (2017)
An introduction to Differential Linear Logic: proof-nets, models and antiderivatives. 
{\em Mathematical Structures in Computer Science}, \textbf{28}(7) 1--66.

\bibitem
{ehrhard2003differential}
T. Ehrhard and L. Regnier (2003)
{\em Theoretical Computer Science} {\bf 309}(1), 1--41.

\bibitem
{mac2013categories}
S. Mac Lane (1971, revised 2013)
Categories for the working mathematician, {\em Springer-Verlag, New York, Berlin, Heidelberg} 

\bibitem
{rosicky1984abstract}
J. Rosick{\`y} (1984)
Abstract tangent functors, {\em Diagrammes}, \textbf{12}(3), 1--11.  

\end{thebibliography}

\appendix

\section{Generalized Pre-$\mathsf{D}$-Sequences and Generalized $\mathsf{D}$-Sequences}\label{GCDCsec}

In this appendix, we very briefly explain how to generalize pre-$\mathsf{D}$-sequences and $\mathsf{D}$-sequences to construct cofree generalized Cartesian differential categories. We elected to focus on constructing cofree Cartesian differential categories in this paper instead since the construction is simpler and more enlightening. We begin with the definition of a generalized Cartesian differential category. 

\begin{definition} A \textbf{generalized Cartesian differential category} \cite[Definition 2.1]{cruttwell2017Cartesian} is a category $\mathbb{X}$ with finite products such that:
\begin{enumerate}[{\em (i)}]
\item For each object $A$, there is a chosen commutative monoid $(\mathsf{L}(A), +_A, 0_A)$, where 
\[+_A: \mathsf{L}(A) \times \mathsf{L}(A) \to \mathsf{L}(A) \quad \quad 0_A: \mathsf{1} \to \mathsf{L}(A)\] 
and such that these choices satisfy $\mathsf{L}(\mathsf{L}(A)) = \mathsf{L}(A)$ and $\mathsf{L}(A \times B) = \mathsf{L}(A) \times \mathsf{L}(B)$. 
\item And $\mathbb{X}$ comes equipped with a combinator $\mathsf{D}$ on maps, which written as an inference rule gives: 
\[\infer{\mathsf{D}[f]: A \times \mathsf{L}(A) \to \mathsf{L}(B)}{f: A \to B}\]
such that $\mathsf{D}$ satisfies the equalities found in  \cite[Definition 2.1]{cruttwell2017Cartesian}. 
\end{enumerate}
\end{definition}

In particular, generalized Cartesian differential categories replace the requirement of having a left additive structure with requiring each object comes paired with a commutative monoid. There is also a Fa\`a di Bruno construction for generalized Cartesian differential categories \cite[Section 2.1]{cruttwell2017Cartesian}. Following that Fa\`a di Bruno construction, we consider a generalization of pre-$\mathsf{D}$-sequences and $\mathsf{D}$-sequences where the maps of the sequences are of type $A \times M \times \hdots M \to N$, where $M$ will play the role of the monoid $\mathsf{L}(A)$.  

Let $\mathbb{X}$ be a category with finite products and consider the category $\mathbb{X} \times \mathbb{X}$, whose objects and maps are pairs of objects and maps of $\mathbb{X}$. Define the functor $\tilde{\mathsf{P}}: \mathbb{X} \times \mathbb{X} \to \mathbb{X} \times \mathbb{X}$ on objects as $\tilde{\mathsf{P}}(A, M):= (A \times M, M \times M)$ and on maps as $\tilde{\mathsf{P}}(f,g):= (f \times g, g \times g)$. Now define the functor $\mathsf{U}: \mathbb{X} \times \mathbb{X} \to \mathbb{X}$ on objects as $\mathsf{U}(A, M) := A \times M$ and on maps as $\mathsf{U}(f,g) := f \times g$. 

\begin{definition} For a category $\mathbb{X}$ with finite products, a \textbf{generalized pre-$\mathsf{D}$-sequence} between pairs of objects $(A,M)$ and $(B,N)$, denoted $f_\bullet: (A, M) \to (B, N)$, is a sequence of maps of $f_\bullet := (f_0, f_1, \hdots)$, where $f_0: A \to B$ and $f_{n+1}: \mathsf{U} \left( \tilde{\mathsf{P}}^{n}(A, M) \right) \to N$ for all $n \in \mathbb{N}$. 
\end{definition}

Explicitly, a generalized pre-$\mathsf{D}$-sequence $f_\bullet: (A, M) \to (B, N)$ is a sequence of maps ${f_0: A \to B}$, $f_1: A \times M \to N$, $f_2: A \times M \times M \times M \to N$, etc. Every pre-$\mathsf{D}$-sequence $f_\bullet: A \to B$ is a generalized pre-$\mathsf{D}$-sequence $f_\bullet: (A, A) \to (B,B)$.  Just by the definition, one can see why generalized pre-$\mathsf{D}$-sequences are slightly more trickier to work with then pre-$\mathsf{D}$-sequence. However, the differential, tangent, and composition of generalized pre-$\mathsf{D}$-sequences are defined essentially the same way as they were for pre-$\mathsf{D}$-sequences. And therefore, for a category $\mathbb{X}$ with finite products, we obtain its category of generalized pre-$\mathsf{D}$-sequences $\overline{\mathcal{GD}}[\mathbb{X}]$. It is interesting to note that $\overline{\mathcal{D}}[\mathbb{X}]$ is to $\overline{\mathcal{GD}}[\mathbb{X}]$, what $\mathsf{Fa{\grave{a}}}(\mathbb{X})$ was to $\mathsf{BFa{\grave{a}}}(\mathbb{X})$ (as defined in \cite[Section 2]{cockett2011faa}). 

From here one can construct a comonad $\overline{\mathcal{GD}}$ on $\mathsf{CART}$ given by the category of generalized pre-$\mathsf{D}$-sequences (similar to the comonad defined in Section \ref{predcomsec}). As before, generalized pre-$\mathsf{D}$-sequences are too arbitrary to give cofree generalized Cartesian differential categories. In particular, generalized pre-$\mathsf{D}$-sequences do not require any added requirements on $M$. Generalized $\mathsf{D}$-sequence will require that $M$ comes equipped with a commutative monoid structure, in order to play the role of $\mathsf{L}(A)$. 

\begin{definition} For a category $\mathbb{X}$ with finite products, a \textbf{generalized $\mathsf{D}$-sequence}, denoted by $f_\bullet: \left(A, (M, +_M, 0_M) \right) \to \left(B, (N, +_N, 0_N) \right)$ -- where $A$ and $B$ are arbitrary objects of $\mathbb{X}$, while $(M, +_M, 0_M)$ and $(N, +_N, 0_N)$ are commutative monoids of $\mathbb{X}$ --  is a generalized pre-$\mathsf{D}$-sequence $f_\bullet: (A, M) \to (B,N)$ such that for each $n \in \mathbb{N}$ and $k \leq n$, the following equalities hold: 
\begin{enumerate}[{\bf [GDS.$1^\prime$]}]
\item $\langle 1_A, 0_M \rangle f_1 =0$ and $\mathsf{U} \left(\tilde{\mathsf{P}}^k\left(\langle 1_A, 0_M \rangle, \langle 1_M, 0_M \rangle \right) \right) f_{n+2} = 0 $; 
\item $\mathsf{U} \left(\tilde{\mathsf{P}}^k\left(1_A, +_M \right) \right) f_{n+1} = \left \langle\mathsf{U} \left(\tilde{\mathsf{P}}^k\left(1_A, \pi_0 \right) \right) f_{n+1}, \mathsf{U} \left(\tilde{\mathsf{P}}^k\left(1_A, \pi_1 \right) \right)  f_{n+1} \right \rangle +_M$;
\item $\mathsf{U} \left(\tilde{\mathsf{P}}^k\left(\langle 1_A, 0_M \rangle, \langle 0_M, 1_A \rangle \right) \right) f_{n+2} =   f_{n+1}$;
\item $c f_2 =f_2$ and $\mathsf{U} \left(\tilde{\mathsf{P}}^k\left(c, c \right) \right) f_{n+3} =   f_{n+3}$. 
\end{enumerate}
\end{definition} 

Here, one can clearly see that our simple notation for pre-$\mathsf{D}$-sequences falls apart for generalized $\mathsf{D}$-sequences. Note that generalized $\mathsf{D}$-sequences can be defined for any category with finite products, where $\mathsf{D}$-sequences could only be defined for Cartesian left additive categories. Indeed given a category $\mathbb{X}$, we obtain its category of generalized $\mathsf{D}$-sequences $\mathcal{GD}[\mathbb{X}]$. From here, following the same constructions and arguments as in Section \ref{Dcomsec}, one can see that generalized $\mathsf{D}$-sequences induce a comonad $\mathcal{GD}$ on $\mathsf{CART}$ whose coalgebras are precisely generalized Cartesian differential categories. In particular, in the category of generalized $\mathsf{D}$-sequences, the chosen monoid (of the generalized Cartesian differential category structure) for the object $\left(A, (M, +, 0) \right)$ is $\left(M, (M, +, 0) \right)$.

\end{document}